\tikzset{
    punkt/.style={
           rectangle,
           draw=white, very thick,
           text width=6.5em,
           minimum height=1.5em,
           text centered}
}
\patchcmd{\SetTagPlusEndMark}{$}{}{}{}
\patchcmd{\SetTagPlusEndMark}{$}{}{}{}
\setlist{font=\normalfont,topsep=1ex,parsep=0ex}
\newcommand{\ConX}{C}
\newcommand{\SpaceY}{Y}
\newcommand{\ConY}{K}
\newcommand{\SpaceH}{H}
\newcommand{\ConH}{\mathcal{K}}
\newcommand{\Feas}{\mathcal{F}}
\newcommand{\Lag}{L}
\newcommand{\BddMul}{w}
\newcommand{\R}{\mathbb{R}}
\newcommand{\M}{\mathcal{M}}
\newcommand{\MM}{\M}
\newcommand{\N}{\mathbb{N}}
\newcommand{\Nor}{\mathcal{N}}
\newcommand\NN\Nor
\newcommand{\LL}{\mathcal{L}}
\newcommand{\Tng}{\mathcal{T}}
\newcommand\TT\Tng
\newcommand{\Rad}{\mathcal{R}}
\newcommand\RR\Rad
\newcommand{\Lin}{\mathcal{L}}
\newcommand{\wto}{\rightharpoonup}
\newcommand{\weakly}{\wto}
\newcommand{\wtos}{\wto^*}
\newcommand{\wStarlimsup}{\operatorname{w*-}\limsup}
\DeclareMathOperator{\dist}{dist}
\newcommand{\<}{\langle}
\renewcommand{\>}{\rangle}
\newcommand{\minimize}{\operatornamewithlimits{minimize}}
\newcommand{\subjectto}{\textnormal{subject to}}
\newcommand{\dualspace}{^*}
\newcommand{\adjoint}{^*}
\newcommand{\cl}{\operatorname{cl}}
\newcommand{\polar}{^\circ}
\newcommand\range{\operatorname{ran}}
\newcommand\coker{\operatorname{coker}}
\newcommand{\vertiii}[1]{{\left\vert\kern-0.25ex\left\vert\kern-0.25ex\left\vert #1 
    \right\vert\kern-0.25ex\right\vert\kern-0.25ex\right\vert}}
\numberwithin{equation}{section}
\DeclareMathAlphabet{\mathpzc}{OT1}{pzc}{m}{it}
\newcommand\oo{\mathpzc{o}}
\DeclarePairedDelimiter\abs{\lvert}{\rvert}
\DeclarePairedDelimiter\norm{\lVert}{\rVert}
\DeclarePairedDelimiterX\innerp[2]{(}{)}{#1,#2}
\DeclarePairedDelimiterX\dual[2]{\langle}{\rangle}{#1,#2}
\providecommand\given{\nonscript\;\delimsize|\nonscript\;}
\DeclarePairedDelimiterX\set[1]{\{}{\}}{#1}
\DeclarePairedDelimiter\paren{(}{)}
\begin{document}

\title{
	New Constraint Qualifications for Optimization Problems in Banach Spaces 
	based on Asymptotic KKT Conditions
}

\author{Eike Börgens\footnotemark[1]
	\and
	Christian Kanzow\thanks{%
		University of Würzburg,
		Institute of Mathematics,
		97074 Würzburg,
		Germany,
		email: \{eike.boergens,kanzow\}@mathematik.uni-wuerzburg.de,
	}
	\and
	Patrick Mehlitz\footnotemark[2]
	\and 
	Gerd Wachsmuth\thanks{%
		Brandenburgische Technische Universität Cottbus--Senftenberg,
		Institute of Mathematics,
		03046 Cottbus,
		Germany,
		email: \{mehlitz,wachsmuth\}@b-tu.de.
		\funding{This research was 
			supported by the German Research Foundation (DFG) within the priority 
			program ``Non-smooth and Complementarity-based Distributed Parameter Systems: 
			Simulation and Hierarchical Optimization'' 
			(SPP 1962) under grant numbers KA 1296/24-2 and WA 3636/4-2.
		}
	} 
}

\date{\today}
\maketitle

\begin{abstract}
Optimization theory in Banach spaces suffers from the lack of available constraint qualifications.
Despite the fact that there exist only a very few constraint qualifications, they are, in addition,
often violated even in simple applications. This is very much in contrast to finite-dimensional
nonlinear programs, where a large number of constraint qualifications is known. Since these
constraint qualifications are usually defined using the set of active inequality constraints, it
is difficult to extend them to the infinite-dimensional setting. One exception is a recently
introduced sequential constraint qualification based on 
\emph{asymptotic} KKT conditions. 
This paper shows that this so-called
asymptotic KKT regularity 
allows suitable extensions to the Banach space setting
in order to obtain new constraint qualifications.
The relation of these new 
constraint qualifications to existing ones is discussed in detail.
Their usefulness is also shown
by several examples as well as an algorithmic application to the class of augmented Lagrangian methods.
\end{abstract}

\begin{keywords}
    Asymptotic KKT Conditions, Asymptotic KKT Regularity, 
	Constraint Qualifications, Optimization in Banach Spaces,
	Augmented Lagrangian Method
\end{keywords}

\begin{AMS}
	49K27, 90C30, 90C48
\end{AMS}

\section{Introduction}

We consider the \emph{Banach space optimization problem}
\begin{equation}\label{Eq:Opt}\tag{$P$}
    \begin{aligned}
    	&\minimize_{x\in\ConX}&  &f(x)&
    	&\subjectto& &G(x)\in\ConY,&
    \end{aligned}
\end{equation}
where $X$ and $\SpaceY$ are (real) Banach spaces, $f\colon X\to\R$ and $G\colon X\to \SpaceY$ are continuously 
Fréchet differentiable mappings, and
$\ConX\subset X$ as well as $\ConY\subset\SpaceY$ are nonempty, closed, convex sets. 
The feasible set of \eqref{Eq:Opt} will be denoted by $\Feas$, i.e., we use
\[
	\Feas:=\{x\in C\,|\,G(x)\in K\}.
\] 
In many cases, $ \ConY $ is actually a cone. 
Moreover, the abstract constraints represented by the set $ \ConX $ may not be present,
i.e., $ \ConX = X $ is possible. Problems akin to \eqref{Eq:Opt} have long been identified as a suitable framework 
for generic optimization covering models from standard nonlinear programming, conic programming, inverse
optimization, or optimal control, see e.g.\ \cite{Bonnans2000} for more details.

A central role for both the theoretical investigation and the numerical solution of optimization problems
like \eqref{Eq:Opt} is played by \emph{constraint qualifications} (CQs). 
The validity of such constraint qualifications at a local minimizer of \eqref{Eq:Opt}
implies that the so-called Karush--Kuhn--Tucker (KKT) conditions associated with this program hold
at the latter point.
Unfortunately, there is a major gap between finite- and infinite-dimensional optimization problems
regarding available CQs.

Some of the weaker CQs like the Abadie or the Guignard constraint
qualification, see \cite{Abadie1965} and \cite{Guignard1969}, respectively, 
can easily be defined for optimization problems in Banach spaces as well, but, as in
the finite-dimensional setting, these conditions are quite abstract and, thus, difficult to
check in practice.
Moreover, they are usually too weak in order to yield meaningful
consequences for convergence theory associated with optimization algorithms which can be used to 
tackle \eqref{Eq:Opt}. 
There exist only a very few
stronger constraint qualifications, but they are often not 
satisfied in practical applications. The most prominent example is probably \emph{Robinson's
constraint qualification} (RCQ), see \cite{Kurcyusz1976,Robinson1976,Zowe1979}, 
which is already violated in the very simple situation where
two-sided (pointwise) box constraints in Lebesgue spaces are under consideration. 
On the other hand, there exist numerous constraint qualifications
in the finite-dimensional context, but these typically depend on the notion of active constraints
and, therefore, cannot be translated directly to the infinite-dimensional setting. Here, RCQ is an
exception since it boils down to the well-known Mangasarian--Fromovitz constraint qualification
in finite dimensions.

Our aim is therefore to introduce new constraint qualifications for optimization problems in 
Banach spaces which are weaker than RCQ and, consequently, have a chance to be satisfied for a
significantly larger class of problems. These new constraint qualifications are so-called
sequential constraint qualifications and, thus, closely related to the notion of the 
asymptotic KKT conditions (AKKT). 
Our study is motivated by a recent
series of papers (dealing with finite-dimensional standard nonlinear programs) 
on the so-called AKKT regularity or cone continuity property, see, e.g.,
\cite{Andreani2010,Andreani2011,AndreaniMartinezRamosSilva2016,Andreani2019}. 
This paper is based on the seemingly simple, but important observation that this
AKKT regularity
can be formulated without using the notion of active constraints, and therefore allows an extension
to optimization problems in Banach spaces. 
Nevertheless, the generalization of the 
AKKT regularity
to Banach spaces requires some care due to the difference between weak and strong convergence.
On the other hand, the opportunity of distinguishing between strong and weak convergence in primal and dual
spaces gives us some freedom to define not only one, but several 
types of AKKT regularity.
Specifically, we will define three kinds of 
AKKT regularity
which turn out to be
satisfied in different situations and which have significantly different applications.

The organization of the paper is as follows: \Cref{Sec:Prelims} recalls some
basic definitions and preliminary results. \Cref{Sec:AKKT} introduces the notion of asymptotic
KKT conditions and essentially shows that, in a reflexive Banach space, every local minimizer of \eqref{Eq:Opt}
satisfies these AKKT conditions under mild additional assumptions on the problem's initial data. 
This result does not require any constraint qualification.
We then show in \Cref{Sec:CCP} that these AKKT conditions reduce to the usual KKT conditions
if and only if suitable sequential constraint qualifications hold which we will call 
\emph{AKKT regularity}
in our context. 
The relation between the introduced types of AKKT regularity as our new constraint qualifications and 
existing CQs in Banach spaces is discussed in \Cref{Sec:Relation}. 
\Cref{sec:CCP_in_practice} is devoted to the investigation of particular constraint systems where the introduced 
types of AKKT regularity
are inherent or can be checked with the aid of reasonable conditions. 
Particularly, we consider linear and nonlinear equality constraints as well as two-sided box constraints in Lebesgue
spaces.
An application of our results to the convergence
of augmented Lagrangian methods is presented in \Cref{Sec:Apps}. We close the paper with some
final remarks in \Cref{Sec:Final}.

\section{Preliminaries}\label{Sec:Prelims}

We mainly use standard notation in this manuscript, see \cite{Bonnans2000}.
The tools from variational analysis which we exploit here are taken from 
\cite{Aubin2009,Clarke1983,Mordukhovich2006}.

Throughout the paper, we denote strong, weak, and weak* convergence of sequences 
by $\to$, $\wto$, and $ \wtos$, respectively.
Let $X$ be a Banach space equipped with norm $\norm{\cdot}_X$.
The associated dual pairing will be represented by $\dual{\cdot}{\cdot}_X\colon X^*\times X\to\R$.
For $x\in X$ and $r>0$, $B_r(x)\subset X$ is used for the closed ball with center $x$ and radius $r$.
If $S\subset X$ is a nonempty subset of $X$, we denote by 
$d_S=\dist(\cdot,S)\colon X\to\R$ the distance function associated with $S$ w.r.t.\ the underlying norm.
In case where $X$ is a Hilbert space, we use $(\cdot,\cdot )_X\colon X\times X\to\R$ in order to
represent the associated inner product.
If $S$ is a nonempty, closed, convex 
subset of the Hilbert space $X$, we write $P_S\colon X\to X$ for the projection map onto $S$. 

Given Banach spaces $X$ and $Y$, a mapping $T\colon X\to Y$ is called
\emph{weak-to-weak* sequentially continuous} if it maps weakly convergent sequences to weak* convergent sequences, 
and \emph{completely continuous} if it maps weakly convergent sequences to strongly convergent sequences. 
It is well known that, given a Fréchet differentiable, completely continuous operator $T$, the Fr\'echet derivative 
$T'(x)\in\mathbb L(X,Y)$ is a completely continuous (or compact) linear operator for all $x\in X$,
see \cite[Thm.\ 1.5.1]{Emelyanov2007}. It is also possible (but slightly more involved) to give sufficient 
conditions for the complete continuity of the derivative mapping $T'\colon X\to \mathbb L(X,Y)$, see \cite{Palmer1969}.
Above, $\mathbb L(X,Y)$ denotes the Banach space of all bounded, linear operators mapping from $X$ to $Y$.
For brevity, the norm in $\mathbb L(X,Y)$ will be denoted by $\norm{\cdot}$ since the underlying spaces $X$ and $Y$
will be clear from the context.

For a Banach space $X$ and sets $A\subset X$ and $B\subset X^*$, we define
\begin{equation*}
    A^\circ :=\left\{v\in X^* \,\middle|\, \forall x\in A\colon\,\dual{v}{x}_{X}\le 0\right\}
    ,
    \qquad
    B^\circ :=\left\{x\in X\,\middle|\, \forall v\in B\colon\,\dual{v}{x}_X\le 0\right\}
\end{equation*}
which will be referred to as the \emph{polar cone} of $A$ and $B$, respectively.
Clearly, $A^\circ$ and $B^\circ$ are both closed, convex cones.

For a closed, convex set $S\subset X$, the  \emph{recession cone} of $ S $ is denoted by
\begin{equation*}
	S_\infty := 
	\{
	d \in X \,|\, \{d\} +S \subset S
	\}.
\end{equation*}
It can be seen easily that $ S_ \infty $ is a closed, convex cone.
Next, fix a reference point $\bar x\in S$. We denote by
\begin{equation*}
   \Rad_{S}(\bar x):=\{\alpha(x-\bar x)\,|\, \alpha\ge 0,~x\in S\},
   \qquad
   \NN_S(\bar x):=\Rad_S(\bar x)^\circ
\end{equation*}
the \emph{radial cone} (also called the \emph{cone of feasible directions}) and
the \emph{normal cone} (in the sense of convex analysis) to $S$ at $\bar x$, respectively.
For points $\tilde x\notin S$, we set $\NN_S(\tilde x):=\varnothing$.

The subsequently stated lemma relates the normal cone and the recession cone of a closed, convex set.
\begin{lemma} \label{lem:inPolarRecessionCone}
Let $ X $ be a Banach space and let $ S \subset X $ be a nonempty, closed, convex set. 
Then $ \{ v \in X^* \mid \sup_{ x\in S } \dual{v}{x}_X < \infty  \} \subset (S_\infty)^\circ $ holds. 
Particularly, $ \Nor_{S}(\bar x) \subset (S_\infty)^\circ $ is valid for all $\bar x\in S$.
\end{lemma}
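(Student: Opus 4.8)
The plan is to prove the first (more general) inclusion by a direct argument and then deduce the statement about the normal cone as an immediate corollary. First I would fix an arbitrary functional $v\in X^*$ with $M:=\sup_{x\in S}\dual{v}{x}_X<\infty$ and aim to show $\dual{v}{d}_X\le 0$ for every $d\in S_\infty$. To this end, fix some $d\in S_\infty$ together with a reference point $x_0\in S$, which exists since $S$ is nonempty. The key step is to exploit the defining property $\{d\}+S\subset S$ of the recession cone iteratively: from $x_0\in S$ we obtain $x_0+d\in S$, applying the inclusion once more gives $x_0+2d\in S$, and a straightforward induction yields $x_0+nd\in S$ for all $n\in\N$.

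With this at hand, the supremum bound gives, for each $n\in\N$,
\[
	\dual{v}{x_0}_X + n\,\dual{v}{d}_X = \dual{v}{x_0+nd}_X \le M,
\]
so that $n\,\dual{v}{d}_X\le M-\dual{v}{x_0}_X$. If $\dual{v}{d}_X>0$ held, the left-hand side would diverge to $+\infty$ as $n\to\infty$, contradicting the finite upper bound on the right. Hence $\dual{v}{d}_X\le 0$, and since $d\in S_\infty$ was arbitrary, $v\in(S_\infty)^\circ$ follows, which establishes the first inclusion.

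For the supplementary claim, I would argue that $\Nor_S(\bar x)$ is contained in the set on the left of the first inclusion. Indeed, any $v\in\Nor_S(\bar x)=\Rad_S(\bar x)^\circ$ satisfies $\dual{v}{\alpha(x-\bar x)}_X\le 0$ for all $\alpha\ge 0$ and $x\in S$; choosing $\alpha=1$ yields $\dual{v}{x}_X\le\dual{v}{\bar x}_X$ for every $x\in S$, whence $\sup_{x\in S}\dual{v}{x}_X\le\dual{v}{\bar x}_X<\infty$. The inclusion established above then gives $v\in(S_\infty)^\circ$, which completes the proof. I do not expect any serious obstacle here; the only point requiring mild care is the inductive passage from the single-step recession property $\{d\}+S\subset S$ to the full discrete ray $\{x_0+nd\mid n\in\N\}\subset S$, as this is what drives the limiting argument.
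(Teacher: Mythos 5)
Your proof is correct and follows essentially the same route as the paper's: both exploit that adding (multiples of) $d\in S_\infty$ to a point of $S$ stays in $S$, so the bound $\sup_{x\in S}\dual{v}{x}_X<\infty$ forces $\dual{v}{d}_X\le 0$; the paper uses the cone property of $S_\infty$ to take $\bar x+td\in S$ for continuous $t>0$, while you iterate the defining inclusion $\{d\}+S\subset S$ to get the discrete ray $x_0+nd\in S$ — an inessential variation. Your explicit verification that normal cone elements are bounded above on $S$ (via $\dual{v}{x}_X\le\dual{v}{\bar x}_X$) is exactly the step the paper leaves implicit in its ``Particularly''.
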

\begin{proof}
Let $ v \in X^* $ be a point with $ \dual{v}{x}_X \leq c $ for some $ c \in \R $ and all 
$ x \in S $. Fix $ d \in S_\infty $ and choose an arbitrary element $ \bar x \in S $. Then $ \bar x + t d \in S $ 
is valid for all $ t > 0 $ since $ S_\infty $ is a cone. Thus $ \dual{v}{\bar x+td}_X \leq c $ 
has to hold for all $t>0$. Clearly, this implies $\dual{v}{d}_X\leq 0$. 
Since $d\in S_\infty$ was arbitrarily chosen, $v \in (S_\infty)^\circ $ follows.
\end{proof}

Given a possibly
nonconvex, closed set $ Q \subset X $ and an element $ \bar x \in Q $, we call
\begin{align*}
	\Tng_Q(\bar x)
	&:=
	\left\{
		d\in X\,\middle|\,
		\exists\{x^k\}\subset Q\,\exists\{t^k\}\subset\R\colon\,
			x^k\to\bar x,\,t^k\searrow 0,\,(x^k-\bar x)/t^k\to d
	\right\},
	\\
	\Tng^w_Q(\bar x)
	&:=
	\left\{
		d\in X\,\middle|\,
		\exists\{x^k\}\subset Q\,\exists\{t^k\}\subset\R\colon\,
			x^k\to\bar x,\,t^k\searrow 0,\,(x^k-\bar x)/t^k\wto d
	\right\},
	\\
	\Tng^\textup{C}_Q(\bar x)
	&:=
	\left\{
		d\in X\,\middle|\,
		\begin{aligned}
			&\forall\{x^k\}\subset Q\,\forall\{t^k\}\subset\R\;\text{such that}\;x^k\to \bar x,\,t^k\searrow 0\\
			&\qquad\exists\{d^k\}\subset X\colon\,d^k\to d,\,x^k+t^kd^k\in Q\,\forall k\in\N
		\end{aligned}
	\right\}
\end{align*}
the \emph{(Bouligand) tangent cone} or \emph{contingent cone}, the \emph{weak tangent cone}, and the
\emph{Clarke tangent cone}  to $Q$ at $\bar x$, respectively. 
By definition of these cones, the inclusions 
$\Tng^\textup C_Q(\bar x)\subset\Tng_Q(\bar x)\subset\Tng^w_Q(\bar x)$ are
always satisfied. In contrast to the tangent and the Clarke tangent cone, which are always closed, the
weak tangent cone does not necessarily possess this property. Furthermore, we note that the Clarke
tangent cone is always convex. If $Q$ is convex, then all these tangent cones coincide with $\cl\Rad_Q(\bar x)$.

Next, we assume that $X$ is a reflexive Banach space.
For the nonempty, closed set $Q\subset X$ and a reference point $\bar x\in Q$, we define by
\begin{align*}
	\widehat{\mathcal N}_Q(\bar x)
	&:=
	\left\{
		v\in X^*
		\,\middle|\,
		\forall x\in Q\colon\,
		\dual{v}{x-\bar x}_X\leq\oo(\norm{x-\bar x}_X)
	\right\},\\
	\Nor^\textup{L}_Q(\bar x)
	&:=
	\left\{
		v\in X^*
		\,\middle|\,
			\exists\{x^k\}\subset Q\,\exists\{v^k\}\subset X^*\colon
			x^k\to\bar x,\,v^k\wto v,\,v^k\in\widehat{\mathcal N}_Q(x^k)\,\forall k\in\N
	\right\}
\end{align*}
the \emph{Fréchet normal cone} (or \emph{regular normal cone}) and the \emph{limiting normal cone}
(or \emph{Mordukhovich normal cone}) to $Q$ at $\bar x$, respectively. 
We always have $\widehat{\Nor}_Q(\bar x)=\Tng^w_Q(\bar x)^\circ$ which is why the Fréchet normal
cone is always closed and convex. On the contrary, the limiting normal cone does not possess any
of these properties in general. Note that the above re\-presentation of the limiting normal cone
only holds in the setting of reflexive Banach spaces. 
A more general definition can be found in \cite{Mordukhovich2006}. 
If $Q$ is convex, it holds
$\Nor_Q(\bar x)=\widehat{\Nor}_Q(\bar x)=\Nor^\textup L_Q(\bar x) = \{ v \in X^* \mid 
\dual{v}{x - \bar x} \leq 0 \text{ for all }  x \in Q \} $, i.e., all these cones coincide
with the classical normal cone from convex analysis.

Let $ \bar x \in \Feas $ be a feasible point of the optimization problem \eqref{Eq:Opt}. Then
\begin{equation}\label{eq:linearization_cone}
   \Lin_\Feas (\bar x) 
   := 
   \left\{ 
   	d \in \Tng_C (\bar x) \, \middle| \, G'(\bar x) d \in \Tng_K (G(\bar x)) 
   \right\}
\end{equation}
is called the \emph{linearization cone} to $ \Feas $ at $ \bar x $. 
Note that the definition of this cone heavily depends on the precise (nonlinear)
description of the set $\Feas$ via $C$, $K$, and $G$.
One can easily check that the inclusion $\Tng_\Feas^w(\bar x)\subset\Lin_\Feas(\bar x)$
is generally valid, see, e.g., \cite[proof of Lem.\ 4.2]{GouldTolle1975}. In order to
guarantee validity of the converse inclusion, a constraint qualification is necessary
in general, see \Cref{Sec:Relation}.

We now turn to the optimality conditions of the optimization problem \eqref{Eq:Opt}. To this end, 
we define the \emph{Lagrange function} or \emph{Lagrangian} $ \Lag\colon X\times \SpaceY^* \to\R$ of the problem as
\begin{equation*}
	\forall x\in X\,~\forall\lambda\in\SpaceY^*\colon\quad
    \Lag (x,\lambda):=f(x)+\dual{\lambda}{G(x)}_{\SpaceY}.
\end{equation*}
This function occurs quite prominently in the KKT conditions of \eqref{Eq:Opt}. 
\begin{definition}\label{Dfn:KKT}
A feasible point $\bar{x}\in \Feas$ of \eqref{Eq:Opt} is called a \emph{KKT point}
if there exists $\bar\lambda\in\SpaceY^*$ such that
\begin{equation*}
	-\Lag'_x(\bar{x},\bar{\lambda})\in \Nor_{\ConX}(\bar{x})
	\quad\text{and}\quad
	\bar{\lambda}\in\Nor_{\ConY}(G(\bar{x})).
\end{equation*}
In this case, $\bar\lambda$ is called a (Lagrange) \emph{multiplier} of \eqref{Eq:Opt}
associated with $\bar x$.
\end{definition}

\section{The Asymptotic KKT Conditions}\label{Sec:AKKT}

The following is the central definition of this section. It generalizes the known definitions of
asymptotic or approximate KKT conditions from the finite-dimensional setting, see 
\cite{Andreani2010, Andreani2011,AndreaniMartinezRamosSilva2016, Andreani2019, Birgin2014}, to our 
optimization problem in Banach spaces \eqref{Eq:Opt}. 
Note that there exist different possibilities for such a 
generalization, but we found the following one particularly useful (this definition is 
essentially taken from the PhD thesis \cite{Steck2018}).

\begin{definition}\label{Def:AKKTseq}
A sequence $ \{(x^k, \lambda^k)\} \subset \ConX \times Y^* $ is called a 
\emph{strong asymptotic KKT sequence (s-AKKT sequence)} 
if there are sequences $ \{\varepsilon^k\} \subset X^*$ and $ \{r^k\} \subset [0,+\infty) $ 
such that
\begin{equation} \label{Eq:akktSeqDef}
	\forall k\in\N\colon\quad
	\varepsilon^k -\Lag'_x(x^k,\lambda^k)\in \Nor_{\ConX}(x^k)
	\quad \text{and} \quad
	\dual{ \lambda^k }{ y - G(x^k) }_{\SpaceY} \leq r^k \quad \forall y \in \ConY,
\end{equation}
with $ \varepsilon^k \to 0 $ and $ r^k \searrow 0 $. We call $ \{(x^k, \lambda^k)\} \subset \ConX \times Y^* $ 
a \emph{weak* asymptotic KKT sequence (w-AKKT sequence)} if \eqref{Eq:akktSeqDef} holds with 
$ \varepsilon^k \rightharpoonup^* 0 $ and $ r^k \searrow 0 $.
\end{definition}

Note that the first condition simplifies to $ \varepsilon^k -\Lag'_x(x^k,\lambda^k) = 0 $ if $ \ConX = X $,
whereas the second condition implies $ \lambda^k \in \ConY^\circ $ for all $ k \in \mathbb{N} $ if $ \ConY $ is a 
cone. 
To see the latter statement, fix $ k \in \mathbb{N} $. Then, exploiting the fact that $ \ConY $ is a cone,
the condition $ \dual{ \lambda^k }{ y - G(x^k) }_{\SpaceY} \leq r^k  $ for all $ y \in \ConY $ can be written as
$ \dual{ \lambda^k }{ \alpha y - G(x^k) }_{\SpaceY} \leq r^k  $ for all $ y \in \ConY $ and all $ \alpha > 0 $. Dividing
this expression by $ \alpha > 0 $ yields $ \dual{ \lambda^k }{  y - \frac{G(x^k)}{\alpha} }_{\SpaceY} \leq\frac{ r^k}{\alpha} $
for all $ y \in \ConY $ and all $ \alpha > 0 $. Taking the limit $ \alpha \to \infty $ therefore implies
$ \dual{ \lambda^k }{  y }_{\SpaceY}\leq 0 $ for all $ y \in \ConY $. Hence, $ \lambda^k \in \ConY^\circ $ 
holds, see \Cref{rem:K_cone} as well.

The main idea of (strong or weak) AKKT sequences is, obviously, the existence of a sequence which
satisfies the KKT conditions inexactly using a certain measure of inexactness. 
As already observed,
e.g., in \cite{Andreani2019}, there exist different ways to measure the degree of inexactness, and these
measures are not necessarily equivalent, even in finite dimensions. As an example, consider the usual
complementarity condition $ \lambda \geq 0$, $g(x) \leq 0$, $\lambda g(x) = 0 $ associated with
an inequality constraint $ g(x) \leq 0 $ which is induced by a function $g\colon\R^n\to\R$. 
This complementarity condition can be rewritten as
$ \min ( - g(x), \lambda ) = 0 $, hence, the condition $ | \min ( - g(x), \lambda ) |
\leq r $ is a very natural criterion for an inexact satisfaction of the complementarity
condition. Alternatively, one might use a condition like $ \lambda \geq 0$, $g(x) \leq 0$, 
$- \lambda g(x) \leq r $. These two conditions, however, are not equivalent, e.g.,
for $n=1$, take $ g(x) := - x $ and consider the sequences defined by $ r^k := 1/k, \lambda^k := k^2 $, 
and $ x^k := 1/k $ for each $k\in\N$. Then the first condition holds for all $ k\in\N $, whereas the second one is 
violated, in fact, $ - \lambda^k g(x^k) \to \infty $. This should be kept in mind because
the concise definition of a (weak or strong) AKKT sequence plays a crucial role. In this paper,
we take advantage of \Cref{Def:AKKTseq}, but alternative definitions might also be useful in
other contexts. The above example also depicts that our definition of a (weak or strong) AKKT sequence is slightly
different from the one stated in \cite[Definition~1.2]{AndreaniMartinezRamosSilva2016} for standard
nonlinear problems in finite dimensions.

Note that \Cref{Def:AKKTseq} does not require any convergence or weak convergence of the sequence
$ \{ ( x^k, \lambda^k ) \} $. By forcing weak or strong convergence of the primal sequence $ \{ x^k \} $, we
obtain the following definition. Notice that this definition still does not assume any convergence or
boundedness of the dual sequence of multipliers $ \{ \lambda^k \} $.

\begin{definition}\label{Def:AKKTpoint}
Let $ \bar x \in \Feas $ be a feasible point of \eqref{Eq:Opt}. Then we call $ \bar x $ a
\begin{enumerate}[(a)]
   \item \emph{weak asymptotic KKT point (w-AKKT point)} if there exists a w-AKKT sequence 
      $ \{ (x^k, \lambda^k) \} $ such that $ x^k \wto \bar x$.
   \item \emph{strong asymptotic KKT point (s-AKKT point)} if there exists an s-AKKT sequence 
      $ \{ (x^k, \lambda^k) \} $ such that $ x^k \to \bar x$.
\end{enumerate}
\end{definition}

In principle, there exist four possible definitions of asymptotic KKT points due to the four possible
combinations of strong and weak convergence of the underlying sequences $ \{ x^k \} $ and 
$ \{ \varepsilon^k \} $. Taking this into account, a more precise terminology for 
w- and s-AKKT points would be ww-AKKT and ss-AKKT points. In addition, one could
therefore also define sw-AKKT and ws-AKKT points, and we do not exclude the possibility
that these additional notions might be useful in certain situations. For our purposes, however,
the above two definitions are sufficient, and to avoid an overkill in the terminology, we simply
talk about w-AKKT and s-AKKT points. In particular, the notion of w-AKKT points is motivated by the 
fact that suitable methods generate sequences which admit weak accumulation points, not necessarily 
strong ones. 
 
The following statement, inspired by the corresponding finite-dimensional result in 
\cite[Thm.\ 3.1]{Birgin2014} and the Hilbert space result in \cite[Prop.\ 3.50]{Steck2018}, essentially
shows that every local minimizer of \eqref{Eq:Opt} is an s-AKKT point, therefore, in particular, a
w-AKKT point. Recall that a local minimizer is not necessarily a KKT point, hence, the
concept of (strong or weak) AKKT points is more general than the notion of KKT points.

\begin{proposition}\label{Prop:OptAKKT2}
Let $\bar{x}$ be a local minimizer of \eqref{Eq:Opt}.
Assume that $X$ is reflexive, and suppose that $f$ is weakly sequentially lower semicontinuous in a neighborhood of $\bar x$.
Moreover, we assume that
\begin{equation}\label{eq:continuity_property_distance_constraint_map}
	\forall\{x^k\}\subset C\,\forall x\in C\colon\qquad
	x^k \wto x
	\quad\text{and}\quad
	d_K(G(x^k)) \to 0
	\quad\Longrightarrow\quad
	G(x) \in K
\end{equation}
holds.
Then $ \bar x $ is an s-AKKT point and, thus, 
also a w-AKKT point of \eqref{Eq:Opt}.
\end{proposition}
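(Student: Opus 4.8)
The plan is to obtain the s-AKKT sequence as (approximately) stationary points of penalized and Tikhonov-regularized subproblems. I would fix $\delta>0$ small enough that $\bar x$ minimizes $f$ over $\Feas\cap B_\delta(\bar x)$ and that $f$ is weakly sequentially lower semicontinuous on $B_\delta(\bar x)$, choose penalty parameters $\rho_k\to+\infty$, and study the functionals
\[
	\Phi_k(x):=f(x)+\tfrac{\rho_k}{2}\,d_\ConY(G(x))^2+\tfrac12\norm{x-\bar x}_X^2
\]
over the strongly closed, weakly compact set $\ConX\cap B_\delta(\bar x)$. The squared-distance term penalizes infeasibility and supplies the multiplier, while the Tikhonov term is what will ultimately force \emph{strong} (not merely weak) convergence toward $\bar x$, hence the s-AKKT rather than only the w-AKKT conclusion.

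First I would produce approximately stationary points of these subproblems. Since $G$ is nonlinear, $x\mapsto d_\ConY(G(x))^2$ need not be weakly sequentially lower semicontinuous, so the direct method does not obviously deliver minimizers of $\Phi_k$ (only $f$, not the penalty, is assumed weakly lower semicontinuous). To sidestep this I would invoke Ekeland's variational principle in the strong topology, where $\Phi_k$ is continuous and—using weak compactness together with weak lower semicontinuity of $f$—bounded below; this yields $x^k\in\ConX\cap B_\delta(\bar x)$ with $\Phi_k(x^k)\le\inf\Phi_k+1/k$ that minimize $x\mapsto\Phi_k(x)+\tfrac1k\norm{x-x^k}_X$. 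Writing stationarity for this perturbed problem and differentiating the penalty—which, when $\SpaceY$ is a Hilbert space, produces the multiplier $\lambda^k:=\rho_k\,(G(x^k)-P_\ConY(G(x^k)))\in\SpaceY^*$ through $\Lag'_x(x^k,\lambda^k)=f'(x^k)+G'(x^k)^*\lambda^k$—gives $\varepsilon^k-\Lag'_x(x^k,\lambda^k)\in\Nor_\ConX(x^k)$, where the error $\varepsilon^k$ is controlled by $\norm{x^k-\bar x}_X+1/k$. The variational inequality characterizing $P_\ConY$ then delivers $\dual{\lambda^k}{y-G(x^k)}_\SpaceY\le 0$ for all $y\in\ConY$, so the second requirement in \eqref{Eq:akktSeqDef} holds with $r^k\equiv 0$.

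The crux—and the step I expect to be the main obstacle—is the strong convergence $x^k\to\bar x$. From $\Phi_k(x^k)\le\Phi_k(\bar x)+1/k=f(\bar x)+1/k$ (using $d_\ConY(G(\bar x))=0$) and boundedness of $f$ from below on $B_\delta(\bar x)$ I would read off $d_\ConY(G(x^k))\to 0$, and reflexivity of $X$ furnishes a subsequence with $x^k\wto x^*\in\ConX\cap B_\delta(\bar x)$. Here assumption \eqref{eq:continuity_property_distance_constraint_map} is exactly what is needed: it turns $x^k\wto x^*$ and $d_\ConY(G(x^k))\to 0$ into $G(x^*)\in\ConY$, so $x^*\in\Feas\cap B_\delta(\bar x)$ and therefore $f(x^*)\ge f(\bar x)$ by local optimality. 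Combining $\limsup_k[f(x^k)+\tfrac12\norm{x^k-\bar x}_X^2]\le f(\bar x)$ with weak lower semicontinuity of $f$ and of the norm forces $x^*=\bar x$, and squeezing the same inequalities yields $\norm{x^k-\bar x}_X\to 0$; a standard subsequence argument upgrades this to convergence of the full sequence.

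Strong convergence then closes the argument: for large $k$ the point $x^k$ lies in the interior of $B_\delta(\bar x)$, so the ball constraint is locally inactive and the stationarity above genuinely involves $\Nor_\ConX(x^k)$, while $\varepsilon^k=-(x^k-\bar x)-e^k\to 0$ strongly in $X^*$. With $r^k\equiv 0\searrow 0$ this makes $\{(x^k,\lambda^k)\}$ an s-AKKT sequence satisfying $x^k\to\bar x$, which is the claim. I expect the delicate points to be the interplay of weak and strong convergence in the squeezing argument and the extraction of the multiplier from the nonsmooth penalty, the latter being clean precisely because the projection machinery is available in the Hilbert space setting.
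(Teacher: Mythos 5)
Your strategy coincides with the paper's proof almost step for step: penalize infeasibility with $k\,d_K^2(G(\cdot))$ plus the Tikhonov term $\norm{x-\bar x}_X^2$, invoke Ekeland's variational principle in the strong topology to obtain approximately stationary points $x^k$, use \eqref{eq:continuity_property_distance_constraint_map} to convert $x^k\wto x^*$ and $d_K(G(x^k))\to 0$ into feasibility of the weak limit, and then squeeze with the weak sequential lower semicontinuity of $f$ and of the norm to force $x^k\to\bar x$, after which the ball constraint is inactive for large $k$. All of that is sound (and you do not even need the full-sequence upgrade at the end: an s-AKKT sequence is all that is required, so a subsequence suffices, which is how the paper argues).

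The genuine gap is in the stationarity and multiplier-extraction step. The proposition assumes only that $X$ is reflexive; $Y$ is an arbitrary Banach space and $X$ need not be Hilbert. Your formula $\lambda^k=\rho_k\,(G(x^k)-P_\ConY(G(x^k)))$ presupposes the metric projection $P_\ConY$ and the Fr\'echet differentiability of $d_\ConY^2$, both of which are Hilbert-space facts — in a general Banach space the nearest point in $K$ need not exist or be unique, and $d_K^2$ is generally nonsmooth; likewise your identity $\varepsilon^k=-(x^k-\bar x)-e^k$ tacitly identifies the derivative of $\tfrac12\norm{\cdot-\bar x}_X^2$ with the Riesz map, which fails outside Hilbert spaces. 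The paper closes exactly this step with nonsmooth calculus: since $d_K$ is convex and Lipschitz, Clarke's Fermat, sum, and chain rules applied at the Ekeland point yield $0\in f'(x^k)-\varepsilon^k+2k\,d_K(G(x^k))\,G'(x^k)^*\xi^k+\Nor_\ConX(x^k)$ with $\xi^k\in\partial d_K(G(x^k))$, where $\varepsilon^k$ comes from $\partial\norm{\cdot}_X^2(x^k-\bar x)=2\norm{x^k-\bar x}_X\,\partial\norm{\cdot}_X(x^k-\bar x)\subset 2\norm{x^k-\bar x}_X B_1(0)$ together with $\tfrac1k\partial\norm{\cdot}_X(0)\subset\tfrac1k B_1(0)$, so $\varepsilon^k\to 0$ strongly in $X^*$ without any smoothness of the norm; and the convex subgradient inequality $0=d_K(y)\ge d_K(G(x^k))+\dual{\xi^k}{y-G(x^k)}_{\SpaceY}$ for $y\in\ConY$ gives $\dual{\lambda^k}{y-G(x^k)}_{\SpaceY}\le 0$ with $\lambda^k:=2k\,d_K(G(x^k))\,\xi^k$, recovering your $r^k\equiv 0$ conclusion. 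With this replacement your argument is a complete proof at the stated generality; as written, it establishes the proposition only when $Y$ (and, for your $\varepsilon^k$ formula, $X$) carries Hilbert structure.
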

\begin{proof}
	Let $\epsilon>0$ be such that $\bar{x}$ minimizes $f$ on $B_\epsilon(\bar{x})\cap\Feas$. 
	Noting that $f$ is continuous, we find some $r\in(0,\epsilon)$ such that $f$ 
	is bounded from below
	and weakly sequentially lower semicontinuous
	on $B_r(\bar x)\cap\ConX$.
	For $k\in\N$, consider the problem
	\begin{equation}\label{Eq:AKKT:PropOptAKKT1}
		\begin{aligned}
			&\minimize_{x\in X}&& f(x)+\norm{x-\bar{x}}_X^2+k d_{\ConY}^2(G(x))&
			&\subjectto&& x\in B_r(\bar{x})\cap \ConX.&
		\end{aligned}
	\end{equation}
	An application of Ekeland's variational principle
	\cite[Thm.\ 3.3.1]{Aubin2009}
	in the complete metric space $X$
	yields a point $x^k \in B_r(\bar x) \cap \ConX$
	which minimizes
	\begin{equation}
		\label{Eq:OptTheory:VI:AKKT:PropOptAKKT3}
		\begin{aligned}
			&\minimize_{x\in X}&& f(x)+\norm{x-\bar{x}}_X^2+k d_{\ConY}^2(G(x))
				+ \frac1k \, \norm{x - x^k}_X&\\
			&\subjectto&& x\in B_r(\bar{x})\cap \ConX.&
		\end{aligned}
	\end{equation}
	By using $x = \bar x$, we obtain
	\begin{equation}
		\label{eq:from_ekeland}
		f(x^k)+\norm{x^k-\bar{x}}_X^2+k d_{\ConY}^2(G(x^k))
		\le
		f(\bar x)
		+ \frac1k \, \norm{\bar x - x^k}_X
		.
	\end{equation}
	Since $\{x^k\}$ is bounded,
	we have w.l.o.g.\ $x^k \wto \hat x$
	for some $\hat x \in B_r(\bar x) \cap C$.
	From \eqref{eq:from_ekeland} we obtain
	\begin{align*}
		f(\bar x)
		&\ge
		\limsup_{k \to \infty} \paren[\big]{ f(x^k) + \norm{x^k - \bar x}_X^2 + k \, d_K^2(G(x^k))}
		\\
		&\ge
		\liminf_{k \to \infty} f(x^k) + \limsup_{k \to \infty} \norm{x^k - \bar x}_X^2 + 
		\liminf_{k \to \infty}k \, d_K^2(G(x^k))
		\\
		&\ge
		f(\hat x) + \limsup_{k \to \infty} \norm{x^k - \bar x}_X^2 + 
		\liminf_{k \to \infty}k \, d_K^2(G(x^k)) .
	\end{align*}
	This shows that $d_K(G(x^k)) \to 0$ holds (along a subsequence).
	Hence, \eqref{eq:continuity_property_distance_constraint_map} implies 
	validity of $G(\hat x) \in K$.
	Thus, $\hat x \in \Feas$.
	Now, the above inequality implies
	\begin{equation*}
		f(\bar x)
		\ge
		f(\hat x) + \limsup_{k \to \infty} \norm{x^k - \bar x}_X^2
		\ge
		f(\bar x) + \limsup_{k \to \infty} \norm{x^k - \bar x}_X^2.
	\end{equation*}
	Thus, $x^k \to \bar x$ and, therefore, $\hat x = \bar x$.

	Recalling that $x^k$ is a solution of \eqref{Eq:OptTheory:VI:AKKT:PropOptAKKT3} for each $k\in\N$
	while $x^k\to\bar x$ holds, 
	$\norm{x^k-\bar x}_X<r$ is valid for sufficiently large $k\in\N$.
	Thus, we obtain the existence of sequences $\{\varepsilon^k\}\subset X^*$ 
	and $\{\xi^k\}\subset Y^*$
	such that $-\varepsilon^k\in\partial \norm{\cdot}_{X}^2(x^k-\bar{x})+\tfrac1k\partial\norm{\cdot}_X(0)$ 
	and $\xi^k\in \partial d_K(G(x^k))$ as well as
	\begin{equation}\label{eq:optimality_condition_penalized_problem}
		0\in \left\{f'(x^k)-\varepsilon^k +2 k\,d_K(G(x^k))\, G'(x^k)^*\xi^k\right\}+ \Nor_{\ConX}(x^k)
	\end{equation}
	hold. Here, we exploited the Fermat, sum, as well as chain rule for Clarke's generalized derivative,
	see \cite[Prop.\ 2.3.2, 2.3.3, Thm.\ 2.3.9, 2.3.10]{Clarke1983}.
	Similarly, Clarke's chain rule implies
	$\partial\norm{\cdot}_X^2(x^k - \bar x) = 2 \, \norm{x^k - \bar x}_X\,\partial\norm{\cdot}_X(x^k - \bar x)$.
	Moreover, it is well known that the inclusion $\partial\norm{\cdot}_X(x) \subset B_1(0)$ holds 
	in $X^*$ for all $x \in X$.
	Hence, we obtain $\varepsilon^k\to 0$ in $X^*$. 
	By definition of the convex function $d_K$ and its subdifferential at $G(x^k)$, we have
	\[
		0=d_K(y)\geq d_K(G(x^k))+\dual{ \xi^k }{ y-G(x^k)}_{\SpaceY}\geq \dual{\xi^k}{y-G(x^k)}_{\SpaceY}
	\] 
	for all $y\in\ConY$. 
	Thus, setting $\lambda^k:=2 k\, d_K(G(x^k))\, \xi^k$, we have
	$\dual{ \lambda^k}{y-G(x^k)}_{\SpaceY}\le 0$ for all $y\in\ConY$.
	Combining this with \eqref{eq:optimality_condition_penalized_problem}, we easily see
	that $\bar x$ is an s-AKKT point.
\end{proof}

Let us briefly note that the continuity property \eqref{eq:continuity_property_distance_constraint_map}
obviously holds whenever the composition $d_K\circ G$ is weakly sequentially lower semicontinuous.
In particular, this property is inherent whenever $ G $ is an affine mapping induced by a bounded
linear operator since then the composition $d_K\circ G$ is a continuous, convex mapping.

\begin{remark}
	If $ X $ is a Hilbert space (or if $\partial \norm{ \cdot }_X^2$ is strongly monotone)
	and $ f^\prime $ is locally Lipschitz continuous,
	then the weak sequential lower semicontinuity of $ f $  can be omitted, by using the problem
	\begin{equation*}
		\begin{aligned}
			&\minimize_{x\in X}&& f(x)+ \gamma \norm{x-\bar{x}}_X^2+k d_{\ConY}^2(G(x))
			&
			&\subjectto&& x\in B_r(\bar{x})\cap \ConX&
		\end{aligned}
	\end{equation*}
	instead of \eqref{Eq:AKKT:PropOptAKKT1}. Above,
	$\gamma  $ is sufficiently large such that 
	$X\ni x\mapsto f(x)  + \gamma \norm{ x - \bar x }_X^2\in\R $ 
	is locally convex and, thus, weakly sequentially lower semicontinuous 
	(this statement can be verified, e.g., by exploiting the equivalence of monotonicity of the gradient 
	and convexity of the underlying function).
\end{remark} 

We close this section with two illustrative examples. The first one considers an optimization problem
which possesses a minimizer where the KKT conditions are violated, but this minimizer is an s-AKKT point.

\begin{example}\label{Ex:MinNoKKT}
Consider the optimization problem \eqref{Eq:Opt} with
\begin{align*}
	X &:= \R \times L^2(0,1),
	\qquad
	Y := L^2(0,1),
	&
	K &:= \{0\} \subset Y,
	\\
	C &:= \R \times \set{u \in L^2(0,1) \given -1 \le u \le 1},
	&
	G(\alpha,u) &:= \alpha \cdot q - u,
\end{align*}
where $q \in L^2(0,1) \setminus L^\infty(0,1)$ is a fixed function.

We argue that $\Feas = \{0\}$ holds. Indeed, the constraint $G(\alpha,u) \in K$
implies $u = \alpha \, q$. If $\alpha \ne 0$, $u$ is unbounded and, therefore, $(\alpha,u) \not\in C$.
Thus, $(\alpha^*, u^*) = (0,0)$ is the global minimizer of \eqref{Eq:Opt} for any objective function $f$.
In particular, $f'(\alpha^*, u^*)$ can be an arbitrary vector in $X\dualspace = \R \times L^2(0,1)$.

It is easy to check the validity of
$\TT_C(0,0) = \R \times L^2(0,1)$,
$ \TT_K(0) = \RR_K(0) = \{0\} $,
$ \NN_C(0,0) = \{(0,0)\} $,
and
$ \NN_K(0) = Y\dualspace$.
Hence, $(0,0)$ is a KKT point if and only if
\begin{equation*}
	f'(0,0)
	=
	-G'(0,0)\adjoint \lambda
	=
	\begin{pmatrix}
		-\dual{q}{\lambda}_{L^2(0,1)}
		\\
		\lambda
	\end{pmatrix}
\end{equation*}
holds for some $\lambda \in L^2(0,1)$. Thus, there exist linear functionals $f$
such that $(0,0)$ is not a KKT point,
e.g., $f := (-1,0)\in X^*$.

Let us show that $(0,0)$ is an s-AKKT point for the linear functional $f = (-1,0)$.
Of course, this follows from \Cref{Prop:OptAKKT2}, but it is instructive
to construct the corresponding s-AKKT sequence explicitly.
To this end, we fix the function $q$ via $q(t) = t^{-1/4}$. Now, we set $\alpha^k := 1/k$
and $u^k(t) := P_{[-1,1]}(\alpha^k \, q(t))$, i.e.,
\begin{equation*}
	u^k(t)
	=
	\begin{cases}
		1 & \text{for } t \le k^{-4}, \\
		\frac{t^{-1/4}}{k} & \text{for } t > k^{-4}.
	\end{cases}
\end{equation*}
Next, we choose $\lambda^k \in L^2(0,1)$ which is supported on $[0,k^{-4}]$
with $\dual{q}{\lambda^k}_{L^2(0,1)} = 1$, e.g., $\lambda^k = \frac34 \, k^{3} \, \chi_{[0,k^{-4}]}$.
Thus, $\mu^k := (0,\lambda^k)$ is an element of $\NN_C(\alpha^k, u^k)$. Moreover, we have
\begin{equation*}
	f'(\alpha^k, u^k)
	+
	G'(\alpha^k,u^k)\adjoint \lambda^k + \mu^k
	=
	\begin{pmatrix}
		-1 \\ 0
	\end{pmatrix}
	+
	\begin{pmatrix}
		\dual{q}{\lambda^k}_{L^2(0,1)} \\ -\lambda^k
	\end{pmatrix}
	+
	\begin{pmatrix}
		0 \\ \lambda^k
	\end{pmatrix}
	=
	0
\end{equation*}
and
\begin{equation*}
	\dual{\lambda^k}{0 - G(\alpha^k, u^k)}_{L^2(0,1)} =-(4k)^{-1}\le 0.
\end{equation*}
Thus, $\{((\alpha^k, u^k), \lambda^k)\}$ is an s-AKKT sequence.
Due to $\norm{\lambda^k}_{L^2(0,1)}=\tfrac34k$, $ \{ \lambda^k \} $ is unbounded in $L^2(0,1)$.

Note that, for this choice of $f$, the point $(0,0)$
is not a KKT point and not even a Fritz--John point
of the associated problem \eqref{Eq:Opt}.
\hfill$\Diamond$
\end{example}

It might be possible to construct a similar example involving $\ell^2$
by using an idea of \cite[Ex.\ 2.5]{Penot1981}.
The second example indicates that \Cref{Prop:OptAKKT2} may not hold without 
the reflexivity of the underlying space.

\begin{example}\label{Ex:necessity_of_reflexivity}
Consider again the optimization problem \eqref{Eq:Opt} with the data
\begin{align*}
	X &:= \ell^1, 
	&
	Y &:= \ell^2,
	&
	C &:= \ell^1,
	\\
	K &:= \{0\} \subset \ell^2,
	&
	G(x) &:= x,
	&
	f(x) &:= \sum_{i = 1}^\infty a_i \, x_i
\end{align*}
for some given sequence $a \in \ell^\infty \setminus c_0$.
Clearly, $\bar x := 0$ is the only feasible point, and therefore also optimal.

We argue by contradiction. Let us assume that $\{(x^k, \lambda^k)\}$ is an s-AKKT sequence. 
Then the convergence
\begin{equation*}
	f'(x^k) + G'(x^k)\adjoint \lambda^k
	=
	a + \lambda^k
	\to 0
\end{equation*}
has to hold in $X^*=\ell^\infty$.
However, it holds $\lambda^k \in \ell^2 \subset c_0$ for each $k\in\N$, and
$c_0$ is a closed subspace of $\ell^\infty$. On the other hand,
 $a \not \in c_0$ holds by assumption. This contradiction 
shows that the reflexivity assumption in \Cref{Prop:OptAKKT2} is essential. 
\hfill$\Diamond$
\end{example}

\section{Asymptotic KKT Regularity}\label{Sec:CCP}

The previous section was devoted to the notion of (strong or weak) AKKT points. In particular,
under fairly mild assumptions, we proved that any local minimizer of \eqref{Eq:Opt} 
is a (strong) AKKT point. Hence,
$ \bar x $ being an AKKT point is a necessary optimality condition which, in particular,
does not require any constraint qualification. Therefore, the natural question arises under
which assumption such an AKKT point is already a KKT point. By generalization of the corresponding 
finite-dimensional theory from \cite{AndreaniMartinezRamosSilva2016,Andreani2019} to our setting, 
this leads to three types of 
asymptotic KKT regularity 
which turn out to be constraint qualifications. 
More precisely, we will see that, in some sense, they are the weakest possible constraint
qualifications which guarantee that a given (strong or weak) AKKT point 
of \eqref{Eq:Opt} is already a KKT point. 
The relation of these 
types of AKKT regularity
to some existing constraint qualifications will be discussed in
\Cref{Sec:Relation}.

Motivated by the definition of (strong or weak) AKKT points, let us introduce
\begin{equation}\label{Eq:SeqMxr}
   \M (x,r) 
   := 
   \left\{
   G^\prime (x)^* \lambda + \mu \in X^*
   \, \middle|\, 
   \begin{aligned}
   	&\lambda\in\SpaceY^*,\,\mu\in\NN_C(x),\\
   	&\dual{ \lambda }{ y - G(x) }_{\SpaceY} \leq r \, \forall y \in \ConY
   \end{aligned}
   \right\}
\end{equation}
for $x \in X$ and $r \geq 0 $. 
Note that we have $\M(\tilde x,r)=\varnothing$ for all $\tilde x\notin C$ and
$r \geq 0  $ by definition of the normal cone.
If $\bar x\in X$ is feasible to \eqref{Eq:Opt}, it holds that
\begin{equation}
	\label{Eq:SeqMbarx0}
	\M(\bar x,0)
	=
	\left\{
		G'(\bar x)^*\lambda+\mu\in X^*
		\,\middle|\,
		\lambda\in\NN_K(G(\bar x)),\,\mu\in\NN_C(\bar x)
	\right\}.
\end{equation}
It follows that the condition $ - f'(\bar x) \in \M(\bar x,0) $ 
is equivalent to $ \bar x$ being a KKT point of \eqref{Eq:Opt},
see \cref{Dfn:KKT}.

Let us first state two simple observations 
regarding the set $ \M (x,r)  $.
\begin{remark}\label{rem:K_cone}
	If $\ConY$ is a cone, we have the equivalence
	\[
		\forall y \in \ConY\colon\,\dual{ \lambda }{ y - G(x) }_{\SpaceY} \leq r  
		\quad\Longleftrightarrow\quad
		\lambda\in \ConY^\circ,\;-\dual{\lambda}{G(x)}_{\SpaceY}\leq r
	\]
	for each $\lambda\in Y^*$, cf.\ the corresponding discussion after
	\Cref{Def:AKKTseq}. This yields the representation
	\[
		\M (x,r)
		=
		\left\{
			G^\prime(x)^*\lambda + \mu\in X^*
			\mid
			\lambda\in \ConY^\circ ,\, \mu \in \NN_C(x),\,
			-\dual{\lambda}{G(x)}_{\SpaceY}\leq r
		\right\}
	\]
	for all $x\in X$ and $r \geq 0 $.
\end{remark}

\begin{remark}
	For a general convex set $ K $, the condition
	\begin{equation*}
		\forall y \in K\colon\quad
		\dual{ \lambda }{ y - G(x) }_{\SpaceY} \leq r  
	\end{equation*}
	from the definition of $ \M (x,r) $ implies that 
	$ \sup_{ y \in K } \dual{ \lambda }{ y }_{\SpaceY} < +\infty $,
	hence $ \lambda \in (K_\infty) ^\circ $, by \Cref{lem:inPolarRecessionCone}.
	This can be interpreted as a sign property of the Lagrange multiplier $ \lambda $, as it was 
	introduced in \cite{Andreani2019} in the finite-dimensional setting.
\end{remark}

Before we introduce the three already advertised versions of AKKT regularity, let us define the following
Painlevé--Kuratowski-type
outer/upper limits
\begin{align*}
	\limsup_{
		\substack{x \to \bar x \\ r \searrow 0}
	} \M(x,r)
	&:=
	\left\{
	\bar v \in X^* \,\middle| \,
		\begin{aligned}
			&\exists \{x^k\}\subset X\,\exists\{r^k\}\subset [0,+\infty) \,
			\exists\{v^k\}\subset X^*\colon\\
			&\qquad  x^k \to \bar x,\, r^k \searrow 0 ,\, v^k \to \bar v,\\
			&\qquad  v^k \in \M(x^k , r^k )\,\forall k\in\N
		\end{aligned}
	\right\},
	\\
	\wStarlimsup_{
			\substack{x \to \bar x \\ r \searrow 0}
	} \M(x,r)
	&:=
	\left\{
	\bar v \in X^* \,\middle| \,
		\begin{aligned}
			&\exists \{x^k\}\subset X\,\exists\{r^k\}\subset [0,+\infty) \,
			\exists\{v^k\}\subset X^*\colon\\
			&\qquad x^k \to \bar x ,\,r^k \searrow 0 ,\, v^k \wtos \bar v,\\
			&\qquad  v^k \in \M(x^k , r^k )\,\forall k\in\N
		\end{aligned}
	\right\},
	\\
	\wStarlimsup_{
			\substack{x \wto \bar x \\ r \searrow 0}	
	} \M(x,r)
	&:=
	\left\{
	\bar v \in X^* \,\middle| \,
		\begin{aligned}
			&\exists \{x^k\}\subset X\,\exists\{r^k\}\subset [0,+\infty) \,
			\exists\{v^k\}\subset X^*\colon\\
			&\qquad  x^k \wto \bar x,\, r^k \searrow 0 ,\, v^k \wtos \bar v,\\
			&\qquad v^k \in \M(x^k , r^k )\,\forall k\in\N
		\end{aligned}
	\right\}
\end{align*}
of the set-valued map $ \M \colon X \times [0,+\infty) \rightrightarrows X\dualspace $ w.r.t.\ some point 
$\bar x\in\Feas$. Recall that $ \M(x^k,r^k) $ is empty for all $ x^k \not\in C $, hence, requiring
the existence of a sequence $ \{ x^k \} \subset X $ in the previous definitions is equivalent to 
assuming that this sequence belongs to the set $ C $.

	These definitions give rise to the following observation 
	\begin{lemma}
		\label{lem:referee}
		Let $\bar x \in \Feas$ be a feasible point of \eqref{Eq:Opt}.
		\begin{enumerate}[label=(\alph*)]
			\item
				Then $\bar x$ is an s-AKKT point if and only if 
				$-f'(\bar x) \in \limsup_{ \substack{x \to \bar x \\ r \searrow 0}	} \M(x,r)$.
			\item
				Suppose that $f' \colon X \to X\dualspace$ is weak-to-weak*-sequentially continuous.
				Then $\bar x$ is a w-AKKT point if and only if 
				$-f'(\bar x) \in \wStarlimsup_{ \substack{x \wto \bar x \\ r \searrow 0}	} \M(x,r)$.
		\end{enumerate}
	\end{lemma}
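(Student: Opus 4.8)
The plan is to observe that both equivalences are, at heart, a reformulation of the defining conditions of an AKKT sequence in terms of membership in $\M$, bridged by the single algebraic substitution
\[
	v^k := \varepsilon^k - f'(x^k), \qquad\text{equivalently}\qquad \varepsilon^k := v^k + f'(x^k).
\]
Indeed, recalling $\Lag'_x(x,\lambda) = f'(x) + G'(x)\adjoint\lambda$, the first AKKT condition $\varepsilon^k - \Lag'_x(x^k,\lambda^k) \in \Nor_C(x^k)$ may be rewritten as $\varepsilon^k - f'(x^k) = G'(x^k)\adjoint\lambda^k + \mu^k$ for some $\mu^k \in \Nor_C(x^k)$. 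Together with the second AKKT condition $\dual{\lambda^k}{y - G(x^k)}_Y \le r^k$ for all $y \in K$, this says precisely that $v^k = \varepsilon^k - f'(x^k) \in \M(x^k, r^k)$. Thus producing an AKKT sequence is equivalent to producing sequences $\{x^k\}$, $\{r^k\}$ and points $v^k \in \M(x^k, r^k)$, once one tracks how the convergence $\varepsilon^k \to 0$ (resp.\ $\varepsilon^k \wtos 0$) translates into convergence of $v^k$.

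For part (a) I would argue as follows. If $\bar x$ is an s-AKKT point, take an s-AKKT sequence with $x^k \to \bar x$ and set $v^k := \varepsilon^k - f'(x^k) \in \M(x^k, r^k)$; since $f$ is continuously Fréchet differentiable, $f'$ is norm-continuous, so $f'(x^k) \to f'(\bar x)$ and hence $v^k \to -f'(\bar x)$, placing $-f'(\bar x)$ in the strong outer limit. Conversely, given $x^k \to \bar x$, $r^k \searrow 0$ and $v^k \to -f'(\bar x)$ with $v^k \in \M(x^k, r^k)$, the decomposition defining $\M$ supplies $\lambda^k \in Y^*$ and $\mu^k \in \Nor_C(x^k)$; setting $\varepsilon^k := v^k + f'(x^k)$ yields $\varepsilon^k \to -f'(\bar x) + f'(\bar x) = 0$ by continuity of $f'$, and the membership $v^k \in \M(x^k, r^k)$ reassembles exactly the two AKKT conditions, so $\{(x^k,\lambda^k)\}$ is an s-AKKT sequence with $x^k \to \bar x$.

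Part (b) will follow the same two-directional template verbatim, with strong convergence replaced by weak, respectively weak*, convergence throughout: here $x^k \wto \bar x$ and $\varepsilon^k \wtos 0$, and the only point where the argument differs substantively is the passage $f'(x^k) \wtos f'(\bar x)$, which is exactly what the weak-to-weak*-sequential continuity hypothesis on $f'$ supplies (norm continuity is useless here since $x^k$ converges only weakly). One then uses that weak* convergence is preserved under addition to conclude $v^k = \varepsilon^k - f'(x^k) \wtos -f'(\bar x)$ in one direction and $\varepsilon^k = v^k + f'(x^k) \wtos 0$ in the other. I expect no genuine analytic obstacle: the whole proof is a bookkeeping exercise in unwinding the definitions, and the only things to keep track of are the correct continuity notion for $f'$ in each part---norm continuity, free from the $C^1$ assumption, for (a) versus the weak-to-weak* hypothesis for (b)---together with the harmless fact, already noted in the text, that $\M(x^k,r^k) = \varnothing$ forces $x^k \in C$, so feasibility of the $x^k$ need not be imposed separately.
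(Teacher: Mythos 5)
Your proposal is correct and is precisely the definitional unwinding that the paper declares ``follows immediately from the definitions'': the substitution $v^k=\varepsilon^k-f'(x^k)$ identifies the two AKKT conditions with membership $v^k\in\M(x^k,r^k)$, norm continuity of $f'$ (from the standing $C^1$ assumption) handles part (a), and the weak-to-weak* sequential continuity hypothesis handles part (b). You also correctly note the small but necessary point that $\M(x^k,r^k)\ne\varnothing$ forces $x^k\in C$, so no separate feasibility requirement on the primal sequence is needed.
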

	The proof follows immediately from the definitions.
	We already observed that $\bar x$ is a KKT point if and only if
	$-f'(\bar x) \in \MM(\bar x, 0)$.
	This motivates the next definition.

\begin{definition}\label{Def:AKKTCQ}
Let $ \M (x,r) $ be defined as in \eqref{Eq:SeqMxr}, and let $ \bar x \in\Feas$ be any feasible point of 
\eqref{Eq:Opt}. Then $ \bar x $ is
\begin{enumerate}[(a)]
   \item \emph{strongly AKKT regular} (s-AKKT regular) if
	\begin{equation*}
	\limsup_{
		\substack{x \to \bar x \\ r \searrow 0}	
	} \M(x,r)
	\subset
	\M( \bar x , 0 );
	\end{equation*}
   \item \emph{strongly-weakly AKKT regular} (sw-AKKT regular) if
	\begin{equation*}
	\wStarlimsup_{
		\substack{x \to \bar x \\ r \searrow 0}
	} \M(x,r)
	\subset
	\M( \bar x , 0 );
	\end{equation*}
   \item \emph{weakly AKKT regular} (w-AKKT regular) if
	\begin{equation*}
		\wStarlimsup_{
	\substack{x \wto \bar x \\ r \searrow 0}	
	} \M(x,r)
	\subset
	\M( \bar x , 0 ).
	\end{equation*}
\end{enumerate}
\end{definition}

Note that these three conditions are equivalent in finite dimensions.
In the infinite-dimensional setting, however, we only have
\begin{equation*}
	\text{w-AKKT regular}
	\quad\Longrightarrow\quad
	\text{sw-AKKT regular}
	\quad\Longrightarrow\quad
	\text{s-AKKT regular}
	.
\end{equation*}
While the role of 
s- and w-AKKT regularity 
is motivated by
the definition of s-AKKT and w-AKKT points, respectively,
we will see in \Cref{Sec:Relation}
that sw-AKKT regularity possesses reasonable relationships
to classical CQs in infinite-dimensional programming. 
Let us also emphasize
that \Cref{Def:AKKTCQ} is stated for feasible points only. Hence, whenever
we use one of the above AKKT regularity conditions in our subsequent analysis,
there is the implicit assumption that $ \bar x $ is feasible to \eqref{Eq:Opt}
even if this might not be stated explicitly.

In the above definitions of the three types of AKKT regularity, 
it was used that $ r \geq 0 $.
This, however, was just done to make the definition more concise.
If one desires to allow $ r < 0 $, all results presented in the sequel are 
applicable using $ \tilde r := \max (0 , r ) $.

\begin{remark}\label{rem:closedness_via_CCP}
	Let $\bar x\in\Feas$ be a feasible point of \eqref{Eq:Opt}.
	Then, if any of the conditions in \Cref{Def:AKKTCQ}
	is satisfied, we automatically have equality in the defining property,
	e.g., s-AKKT regularity implies
	\[
		\limsup_{
		\substack{x \to \bar x \\ r \searrow 0}	
		} \M(x,r)
		=
		\M( \bar x , 0 )
		.
	\]
	Particularly, this yields that $\M(\bar x, 0)$ is closed.
	
	Noting that $\M(\bar x,0)$ is convex,
	$\bar x$ being s-AKKT regular 
	implies the weak closedness of
	$\M(\bar x,0)$. Whenever $X$ is reflexive, this coincides with weak* closedness of $\M(\bar x,0)$.
\end{remark}

We next discuss the question under which condition a strong or weak AKKT point is already
a KKT point. We first state a ``strong'' formulation in the following result.

\begin{theorem}\label{sAKKTpIsKKT}
Let $\bar x\in\Feas$ be a feasible point of \eqref{Eq:Opt}.
Then the following statements hold:
\begin{enumerate}[(a)]
   \item If $ \bar x $ is an s-AKKT point 
   being s-AKKT regular, 
   then $ \bar x $ is a KKT point.
   \item Conversely, if for every continuously differentiable function $ f $, the implication
		``$\bar x$ is an s-AKKT point $\Longrightarrow$ $\bar x$ is a KKT point''
		holds,
        then $ \bar x $ is s-AKKT regular.
\end{enumerate}
\end{theorem}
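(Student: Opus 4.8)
The plan is to reduce both statements to \Cref{lem:referee}(a) together with the characterization, recorded just after \eqref{Eq:SeqMbarx0}, that $\bar x$ is a KKT point precisely when $-f'(\bar x) \in \M(\bar x, 0)$.

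For part (a) I would argue directly. Since $\bar x$ is an s-AKKT point, \Cref{lem:referee}(a) yields $-f'(\bar x) \in \limsup_{\substack{x \to \bar x \\ r \searrow 0}} \M(x,r)$. The assumed s-AKKT regularity provides the inclusion $\limsup_{\substack{x \to \bar x \\ r \searrow 0}} \M(x,r) \subset \M(\bar x, 0)$, whence $-f'(\bar x) \in \M(\bar x, 0)$. By the equivalence following \eqref{Eq:SeqMbarx0}, this is exactly the assertion that $\bar x$ is a KKT point, so part (a) is complete.

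For part (b) I would prove the contrapositive. Suppose $\bar x$ fails to be s-AKKT regular, so that there exists some $\bar v \in \limsup_{\substack{x \to \bar x \\ r \searrow 0}} \M(x,r)$ with $\bar v \notin \M(\bar x, 0)$. The decisive observation is that the freedom in the choice of $f$ lets me realize $\bar v$ as $-f'(\bar x)$: taking the continuous linear functional $f(x) := -\dual{\bar v}{x}_X$, which is trivially continuously Fréchet differentiable with constant derivative $f'(x) \equiv -\bar v$, gives $-f'(\bar x) = \bar v$. For this particular $f$, \Cref{lem:referee}(a) shows that $\bar x$ is an s-AKKT point, because $\bar v$ lies in the relevant outer limit, whereas $\bar v \notin \M(\bar x, 0)$ shows that $\bar x$ is \emph{not} a KKT point. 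This contradicts the assumed implication ``$\bar x$ s-AKKT point $\Rightarrow$ $\bar x$ KKT point'' holding for every continuously differentiable $f$, which establishes the contrapositive.

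The argument is short because the substantive work is already absorbed into \Cref{lem:referee}. The only point that requires genuine care — and the closest thing to an obstacle — occurs in part (b): one must note that admitting \emph{all} $C^1$ objectives, and in particular all bounded linear functionals, is exactly what allows every element of $X^*$ to appear as $-f'(\bar x)$. It is precisely this universality over $f$ that turns a single violating direction $\bar v$ into a genuine counterexample, thereby showing that s-AKKT regularity is not merely sufficient but also necessary for the s-AKKT-implies-KKT implication to hold uniformly in $f$.
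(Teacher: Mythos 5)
Your proposal is correct and takes essentially the same approach as the paper: part (a) coincides with the paper's argument, and part (b) rests on the identical key construction $f(x) := -\dual{\bar v}{x}_X$ together with the outer-limit characterization of s-AKKT points. The only cosmetic differences are that you argue by contrapositive where the paper argues directly over an arbitrary $\bar v$ in the outer limit, and that you delegate the verification that $\bar x$ is an s-AKKT point for this $f$ to \cref{lem:referee}(a), where the paper instead writes out the corresponding s-AKKT sequence (setting $\varepsilon^k := f'(x^k) + v^k$ and checking $\varepsilon^k - \Lag'_x(x^k,\lambda^k) = \mu^k \in \Nor_C(x^k)$) explicitly.
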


\begin{proof}
(a) If $ \bar x $ is an s-AKKT point of \eqref{Eq:Opt}, 
\cref{lem:referee}~(a) and the s-AKKT regularity imply
\begin{equation*}
   - f'(\bar x) 
   \in 
   \limsup_{
   \substack{x \to \bar x \\ r \searrow 0}	
   } \M(x,r)
   \subset
   \M(\bar x , 0 ).
\end{equation*}
Hence, $ \bar x $ is a KKT point. \\[-1mm]

\noindent
(b) Conversely, assume that the s-AKKT conditions at $ \bar x $ imply the KKT conditions for every
continuously differentiable objective function. Then take an arbitrary element
$ \bar v \in \limsup_{\substack{x \to \bar x \\ r \searrow 0}	} \M(x,r) $.
By definition, there exist sequences $\{x^k\}\subset X$, $\{r^k\}\subset[0, + \infty)$, and $\{v^k\}\subset X^*$
such that $ x^k \to \bar x $, $r^k \searrow 0 $, and $v^k \to \bar v $ 
as well as $ v^k \in \M(x^k , r^k ) $ for all $ k \in \N $.
Hence, there exist $ \lambda^k \in \SpaceY^* $ with $ \dual{ \lambda^k }{ y - G(x^k) }_{\SpaceY}\leq r^k $
for all $ y \in K $, and $ \mu^k \in \Nor_C (x^k) $ such that $ v^k = G'(x^k)^* \lambda^k + \mu^k $
for all $ k \in \N $. 
We emphasize that $\{x^k\}\subset C$ holds by definition of the normal cone.
Let us define the particular objective function $ f(x) := - \dual{ \bar v }{ x }_X $
for all $x\in X$. Then $ \varepsilon^k := f'(x^k) + v^k = - \bar v + v^k \to 0 $, and we have
\begin{align*}
  \varepsilon^k - L'_x(x^k, \lambda^k) & = v^k + f'(x^k) - f'(x^k) - G'(x^k)^* \lambda^k \\
  & = v^k - G'(x^k)^* \lambda^k \\
  & = \mu^k \in \Nor_C(x^k)
\end{align*}
for all $ k \in \N $. Hence, $\bar x$ is an s-AKKT point
of the associated problem \eqref{Eq:Opt}.
By assumption, it follows $\bar v = - f'(\bar x) \in \M(\bar x , 0)$. 
This clearly shows the inclusion
$ \limsup_{ \substack{x \to \bar x \\ r \searrow 0}} \M(x,r) \subset \M(\bar x , 0) $,
i.e., $ \bar x $ is s-AKKT regular.
\end{proof}

This theorem is similar to \cite[Thm.\ 3.2]{AndreaniMartinezRamosSilva2016}
which characterizes a 
AKKT-regularity-type property
 in the setting of standard
finite-dimensional nonlinear programming.
However, note that the phrase
``that attains a minimum at $x^*$''
has to be deleted from the statement of
\cite[Thm.\ 3.2]{AndreaniMartinezRamosSilva2016},
since otherwise this result would characterize the Guignard constraint qualification, 
despite the fact that the minimizer property does not hold for the function constructed in the presented proof.
Taking together the adjusted theorem from \cite{AndreaniMartinezRamosSilva2016}
and \Cref{sAKKTpIsKKT}, our 
types of AKKT regularity
from \cref{Def:AKKTCQ} are clearly closely related 
to the cone continuity property from \cite{AndreaniMartinezRamosSilva2016} in
the setting of standard nonlinear programming in finite dimensions.
However, as we already mentioned in \Cref{Sec:AKKT}, our definition of an AKKT sequence differs
slightly from the one in \cite{AndreaniMartinezRamosSilva2016} in this setting, so it remains an open problem to check whether the concepts of the cone continuity property and AKKT regularity 
actually coincide.
In contrast to \cite{AndreaniMartinezRamosSilva2016}, our set $ \M(x,r) $ is not necessarily a cone, 
which is why we abstain from referring to the constraint qualifications from \cref{Def:AKKTCQ} as
\emph{cone continuity properties}. 
Let us point out that the term \emph{AKKT regularity} was later also exploited by the authors of 
\cite{AndreaniMartinezRamosSilva2016}, too, in order to refer to constraint qualifications
of this type, see e.g.\ \cite{Andreani2019}.

The previous theorem is quite similar to the statement that Guignard's constraint qualification
is the weakest constraint qualification which ensures that local minimizers are KKT points,
see \cite{GouldTolle1975}.

Using essentially the same technique of proof, we obtain the following ``weak'' counterpart
of \Cref{sAKKTpIsKKT}. 

\begin{theorem}\label{wAKKTpIsKKT}
Let $\bar x\in\Feas$ be a feasible point of \eqref{Eq:Opt}.
Then the following statements hold:
\begin{enumerate}[(a)]
   \item Suppose that
   		 $ f' : X \to X^* $ is weak-to-weak*-sequentially continuous.
   			If $ \bar x $ is w-AKKT point 
   			being w-AKKT regular, 
   			then $ \bar x $ is a KKT point.
   \item Conversely, if for every continuously differentiable function $ f $, the implication
		``$\bar x$ is a w-AKKT point $\Longrightarrow$ $\bar x$ is a KKT point''
		holds,
		then $ \bar x $
		is w-AKKT regular.
\end{enumerate}
\end{theorem}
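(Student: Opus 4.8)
The plan is to mirror the proof of \Cref{sAKKTpIsKKT} almost verbatim, replacing the strong outer limit and \cref{lem:referee}(a) by their weak* counterparts and \cref{lem:referee}(b). The single conceptual novelty is keeping track of which convergence mode (strong versus weak*) is being used at each step.

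For part (a), I would start from the assumption that $\bar x$ is a w-AKKT point. Since $f'$ is weak-to-weak*-sequentially continuous by hypothesis, I can invoke \cref{lem:referee}(b) to obtain $-f'(\bar x) \in \wStarlimsup_{x \wto \bar x,\, r \searrow 0} \M(x,r)$. The w-AKKT regularity of $\bar x$ then yields $-f'(\bar x) \in \M(\bar x, 0)$, which by \eqref{Eq:SeqMbarx0} is exactly the KKT property. This direction is routine; the only point worth highlighting is that the continuity hypothesis on $f'$ is precisely what licenses the use of \cref{lem:referee}(b), whereas no such hypothesis was needed in the strong case because strong convergence $x^k \to \bar x$ together with continuity of $f'$ automatically gives $f'(x^k) \to f'(\bar x)$.

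For the converse (b), I would fix an arbitrary $\bar v \in \wStarlimsup_{x \wto \bar x,\, r \searrow 0} \M(x,r)$ and extract sequences $\{x^k\} \subset C$, $\{r^k\} \subset [0,+\infty)$, and $\{v^k\} \subset X^*$ with $x^k \wto \bar x$, $r^k \searrow 0$, $v^k \wtos \bar v$, and $v^k \in \M(x^k, r^k)$. Writing $v^k = G'(x^k)^* \lambda^k + \mu^k$ with $\mu^k \in \Nor_C(x^k)$ and $\dual{\lambda^k}{y - G(x^k)}_{\SpaceY} \le r^k$ for all $y \in K$, I would test with the linear functional $f(x) := -\dual{\bar v}{x}_X$, exactly as in the strong case. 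Setting $\varepsilon^k := f'(x^k) + v^k = -\bar v + v^k$ yields $\varepsilon^k - \Lag'_x(x^k, \lambda^k) = \mu^k \in \Nor_C(x^k)$, while the dual feasibility inequality is inherited directly from $v^k \in \M(x^k, r^k)$. Hence $\{(x^k, \lambda^k)\}$ is a w-AKKT sequence with $x^k \wto \bar x$, so $\bar x$ is a w-AKKT point. Applying the standing hypothesis to this (continuously differentiable) $f$ forces $\bar x$ to be a KKT point, i.e., $\bar v = -f'(\bar x) \in \M(\bar x, 0)$, which is the desired inclusion establishing w-AKKT regularity.

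The key — and only genuinely different — step compared to \Cref{sAKKTpIsKKT} is the convergence bookkeeping for $\varepsilon^k$, and I expect this to be the main thing to verify carefully. Because the test functional is linear, its derivative $f' \equiv -\bar v$ is constant, so the weak* convergence $v^k \wtos \bar v$ supplied by the outer limit translates directly into $\varepsilon^k = -\bar v + v^k \wtos 0$, which is precisely the convergence mode demanded by the definition of a w-AKKT sequence. I would also remark that no continuity assumption on $f'$ needs to be imposed in (b): the constant derivative of a linear functional is trivially weak-to-weak*-sequentially continuous, so the test functions automatically lie within the scope of the hypothesis, and the asymmetry with part (a) in this respect is only apparent. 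Once the matching of weak* convergence modes is in place, the remainder of the argument is identical to the strong case.
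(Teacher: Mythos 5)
Your proposal is correct and matches exactly what the paper intends: the authors state that \cref{wAKKTpIsKKT} follows ``using essentially the same technique of proof'' as \cref{sAKKTpIsKKT}, and your adaptation — invoking \cref{lem:referee}(b) together with w-AKKT regularity for part (a), and testing with the linear functional $f(x) := -\dual{\bar v}{x}_X$ so that $\varepsilon^k = -\bar v + v^k \wtos 0$ for part (b) — is precisely that argument, with the convergence bookkeeping handled correctly. Your side remark that the hypothesis on $f'$ is needed only in (a), since the constant derivative of the linear test functional is trivially weak-to-weak*-sequentially continuous, is also accurate.
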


The previous results imply that
strong, strong-weak, and weak AKKT regularity 
are constraint qualifications
under appropriate assumptions on the initial problem data of \eqref{Eq:Opt}. 
In fact, given a local minimum $ \bar x $ of \eqref{Eq:Opt} 
which is w-, sw-, or s-AKKT regular, 
it follows from \Cref{Prop:OptAKKT2} (under the assumptions stated there)
that $ \bar x $ is an s-AKKT point. 
Noting that w- or sw-AKKT regularity 
implies
s-AKKT regularity, the first statement of \Cref{sAKKTpIsKKT} can be used
to infer that $\bar x$ is already a KKT point of \eqref{Eq:Opt}.
We summarize these observations
in the following corollary.

\begin{corollary}\label{Cor:AKKT-CQ_is_CQ}
Let $X$ be reflexive and let \eqref{eq:continuity_property_distance_constraint_map} be satisfied.
Then  w-, sw-, and s-AKKT regularity 
are constraint qualifications for \eqref{Eq:Opt} in the following sense:
For every objective $f$ which is continuously differentiable and weakly sequentially lower semicontinuous,
local optimality of $\bar x$ implies that $\bar x$ is a KKT point.
\end{corollary}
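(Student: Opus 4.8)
The plan is to combine the existence result for s-AKKT points with the fact that s-AKKT regularity upgrades such points to genuine KKT points. First I would verify that the hypotheses of \Cref{Prop:OptAKKT2} are met: the corollary assumes that $X$ is reflexive and that \eqref{eq:continuity_property_distance_constraint_map} holds, and it restricts attention to objectives $f$ that are continuously differentiable and weakly sequentially lower semicontinuous. These are precisely the standing assumptions of \Cref{Prop:OptAKKT2}, so for any local minimizer $\bar x$ of \eqref{Eq:Opt} that proposition yields that $\bar x$ is an s-AKKT point (and, a fortiori, a w-AKKT point).

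Next I would reduce all three regularity notions to the strong one. Since each of the three types of AKKT regularity is defined via a Painlev\'e--Kuratowski-type outer limit and the admissible sequences become progressively more restrictive as one passes from weak to strong convergence in the primal and dual variables, the implications w-AKKT regular $\Rightarrow$ sw-AKKT regular $\Rightarrow$ s-AKKT regular are valid; this chain is recorded right after \Cref{Def:AKKTCQ}. Consequently, whichever of the three regularity conditions is assumed at $\bar x$, the point $\bar x$ is in particular s-AKKT regular.

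Having established that $\bar x$ is simultaneously an s-AKKT point and s-AKKT regular, the final step is a direct appeal to \Cref{sAKKTpIsKKT}~(a), which states that an s-AKKT point satisfying s-AKKT regularity is a KKT point. This closes the argument.

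The proof is essentially a bookkeeping combination of two earlier results, so I do not anticipate a genuine obstacle; the only point requiring care is to confirm that the hypotheses imposed in the corollary are exactly those of \Cref{Prop:OptAKKT2} and that the regularity hierarchy indeed collapses each of the three assumed notions onto s-AKKT regularity, which is the precise version needed to invoke \Cref{sAKKTpIsKKT}~(a). I would also emphasize that the weak-to-weak* sequential continuity of $f'$, which is required for the w-AKKT statement of \Cref{wAKKTpIsKKT}, is \emph{not} needed here: every case is routed through the s-AKKT point produced by \Cref{Prop:OptAKKT2} and through \Cref{sAKKTpIsKKT}~(a), neither of which relies on that continuity property.
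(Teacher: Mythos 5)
Your proposal is correct and follows exactly the paper's own argument: invoke \Cref{Prop:OptAKKT2} to obtain that the local minimizer is an s-AKKT point, use the implication chain that w- and sw-AKKT regularity each imply s-AKKT regularity, and then apply \Cref{sAKKTpIsKKT}~(a). Your closing observation that the weak-to-weak* sequential continuity of $f'$ from \Cref{wAKKTpIsKKT} is not needed is also accurate and consistent with the paper's reasoning.
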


Using the terminology from \cite{AndreaniMartinezRamosSilva2016}, any condition which guarantees 
that an AKKT point is already a KKT point is called a \emph{strict constraint qualification}.
The previous results therefore show that our 
AKKT-regularity-type 
conditions are strict constraint qualifications,
and that they are the weakest possible ones.

Finally, we want to present sufficient criteria for 
sw- and w-AKKT regularity. 
To this end, recall that,
given any sequences $ \{ x^k \}\subset X$, $\{ r^k \}\subset [0,+\infty) $, and $ \{ v^k \}\subset X^* $ 
with $ v^k \in \M (x^k, r^k) $ for all $ k \in \N $, it follows that there exist corresponding sequences 
$ \{ \lambda^k \}\subset\SpaceY^* $ and $ \{ \mu^k \}\subset X^* $ such that $ v^k = G'(x^k)^* \lambda^k + \mu^k $ 
holds for all $ k \in \N $. In general, these sequences of multipliers might be unbounded even if
$\{x^k\}$, $\{r^k\}$, and $\{v^k\}$ converge.
The following result discusses the situation where the sequences $\{\lambda^k\}$ and $\{\mu^k\}$
can be chosen as bounded ones.

\begin{lemma}
	\label{lem:akkt_bounded}
	Let a feasible point $\bar x \in \Feas$ of \eqref{Eq:Opt} be given.
	\begin{enumerate}[(a)]
		\item\label{item:bounded_swCCP}
			Assume that for all sequences
			$\{x^k\}\subset X$, $\{r^k\}\subset [0,+\infty) $, and $\{v^k\} \subset X\dualspace$ 
			which satisfy
			$x^k \to \bar x$,
			$r^k \searrow 0$,
			$v^k \wtos \bar v$,
			and
			$v^k \in \MM(x^k, r^k)$ for all $k\in\N$,
			there exist bounded sequences of multipliers
			$\{\lambda^k\}\subset \SpaceY^*$ and $\{\mu^k\} \subset X\dualspace$
			such that
			$\mu^k \in \NN_C(x^k)$
			and
			$\dual{ \lambda^k }{ y - G(x^k) }_{\SpaceY} \leq r^k$ for all $y \in \ConY$
			and $k\in\N$
			as well as
			$v^k=G'(x^k)\adjoint \lambda^k + \mu^k \wtos \bar v$.
			Then $ \bar x $ is sw-AKKT regular.
		\item\label{item:bounded_wCCP}
			Assume that $G$ and $G'$ are completely continuous
			and $C = X$.
			Assume further that for all sequences
			$\{x^k\}\subset X$, $\{r^k\}\subset [0,+\infty) $, and $\{v^k\} \subset X\dualspace$ 
			which satisfy
			$x^k \wto \bar x$,
			$r^k \searrow 0$,
			$v^k \wtos \bar v$,
			and
			$v^k \in \MM(x^k, r^k)$ for all $k\in\N$,
			there exists a bounded sequence of multipliers
			$\{\lambda^k\}\subset \SpaceY^*$
			such that it holds
			$\dual{ \lambda^k }{ y - G(x^k) }_{\SpaceY} \leq r^k$ for all $y \in \ConY$
			and $k\in\N$
			as well as
			$v^k=G'(x^k)\adjoint \lambda^k\wtos \bar v$.
			Then $ \bar x $ is w-AKKT regular.
	\end{enumerate}
\end{lemma}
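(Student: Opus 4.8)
The plan is to treat both assertions with a single template, exploiting the common structure of the sets $\M(x,r)$: given a candidate limit $\bar v$ in the relevant outer limit, I would use the defining sequences together with the boundedness hypothesis to produce bounded multiplier sequences, and then pass to the limit so as to land in $\M(\bar x,0)$. Recalling from \eqref{Eq:SeqMbarx0} that membership in $\M(\bar x,0)$ means $\bar v = G'(\bar x)\adjoint\lambda + \mu$ with $\lambda\in\Nor_K(G(\bar x))$ and $\mu\in\Nor_C(\bar x)$, the whole proof reduces to identifying such $\lambda$ and $\mu$ as limits of the given multipliers. The only structural difference between part~(a) and part~(b) is the convergence mode of the primal sequence, and accordingly whether I invoke plain continuity of $G$ and $G'$ (from $C^1$-smoothness) or their complete continuity.

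For part~(a) I would start from $\bar v\in\wStarlimsup_{x\to\bar x,\,r\searrow 0}\M(x,r)$, take defining sequences $x^k\to\bar x$, $r^k\searrow 0$, $v^k\wtos\bar v$ with $v^k\in\M(x^k,r^k)$, and apply the hypothesis to get bounded $\{\lambda^k\}\subset Y^*$ and $\{\mu^k\}\subset X^*$ with $\mu^k\in\Nor_C(x^k)$, $\dual{\lambda^k}{y-G(x^k)}_{\SpaceY}\le r^k$ for all $y\in K$, and $v^k=G'(x^k)\adjoint\lambda^k+\mu^k$. Since $G$ is $C^1$, continuity of $G'$ gives $\norm{G'(x^k)\adjoint-G'(\bar x)\adjoint}=\norm{G'(x^k)-G'(\bar x)}\to 0$, so boundedness of $\{\lambda^k\}$ yields $\norm{G'(x^k)\adjoint\lambda^k-G'(\bar x)\adjoint\lambda^k}_{X^*}\to 0$. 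Passing to a subsequence with $\lambda^k\wtos\lambda$, the weak*-to-weak* continuity of the adjoint $G'(\bar x)\adjoint$ gives $G'(\bar x)\adjoint\lambda^k\wtos G'(\bar x)\adjoint\lambda$, hence $G'(x^k)\adjoint\lambda^k\wtos G'(\bar x)\adjoint\lambda$ and therefore $\mu^k=v^k-G'(x^k)\adjoint\lambda^k\wtos\mu:=\bar v-G'(\bar x)\adjoint\lambda$. To identify the limits, for $\lambda\in\Nor_K(G(\bar x))$ I would split $\dual{\lambda^k}{y-G(x^k)}_{\SpaceY}=\dual{\lambda^k}{y-G(\bar x)}_{\SpaceY}+\dual{\lambda^k}{G(\bar x)-G(x^k)}_{\SpaceY}$, where the second term vanishes in the limit because $\{\lambda^k\}$ is bounded and $G(x^k)\to G(\bar x)$ by continuity of $G$; passing $k\to\infty$ in $\dual{\lambda^k}{y-G(x^k)}_{\SpaceY}\le r^k$ then gives $\dual{\lambda}{y-G(\bar x)}_{\SpaceY}\le 0$ for all $y\in K$. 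Analogously, for $\mu\in\Nor_C(\bar x)$ the inequality $\dual{\mu^k}{x-x^k}_X\le 0$, $x\in C$, splits as $\dual{\mu^k}{x-\bar x}_X+\dual{\mu^k}{\bar x-x^k}_X$; the second term vanishes by boundedness of $\{\mu^k\}$ and $x^k\to\bar x$, the first converges by weak* convergence, giving $\mu\in\Nor_C(\bar x)$. This places $\bar v=G'(\bar x)\adjoint\lambda+\mu$ in $\M(\bar x,0)$.

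Part~(b) would run along the same lines, the point being that complete continuity upgrades the merely weak primal convergence into the strong convergences used above: $x^k\wto\bar x$ together with complete continuity of $G'$ yields $\norm{G'(x^k)\adjoint-G'(\bar x)\adjoint}\to 0$, and complete continuity of $G$ yields $G(x^k)\to G(\bar x)$ strongly in $Y$. With $C=X$ one has $\mu^k=0$ and $\Nor_C(\bar x)=\{0\}$, so only the $\lambda$-part survives and the identification $\bar v=G'(\bar x)\adjoint\lambda$ with $\lambda\in\Nor_K(G(\bar x))$ follows verbatim.

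The step I expect to be the main obstacle is the passage to the limit in the dual variables. Boundedness of $\{\lambda^k\}$ in $Y^*$ is used to extract a weak* convergent subsequence, which rests on weak* sequential compactness of bounded sets in $Y^*$ (available, e.g., when $Y$ is separable or reflexive, as in the applications of interest); this is precisely why the hypotheses insist on \emph{bounded} multipliers. Equally important is that the limit genuinely lands in $\Nor_K(G(\bar x))$, and here the decisive ingredient is the strong convergence $G(x^k)\to G(\bar x)$, which controls the cross term $\dual{\lambda^k}{G(\bar x)-G(x^k)}_{\SpaceY}$ even though $\{\lambda^k\}$ only converges weakly*. In part~(b) this strong convergence is not for free and is exactly what complete continuity of $G$ delivers, which explains why that assumption cannot be dropped once $x^k$ merely converges weakly.
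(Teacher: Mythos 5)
Your proposal follows the paper's proof step for step: the same decomposition $v^k = G'(x^k)\adjoint \lambda^k + \mu^k$, the same splitting of the cross terms $\dual{\lambda^k}{G(\bar x)-G(x^k)}_{Y}$ and $\dual{\mu^k}{\bar x - x^k}_X$ controlled by boundedness of the multipliers, the same identification $\lambda \in \NN_K(G(\bar x))$ and $\mu \in \NN_C(\bar x)$, and the same use of complete continuity in part~(b) to recover the strong convergences $G(x^k) \to G(\bar x)$ and $\norm{G'(x^k)-G'(\bar x)} \to 0$ from merely weak primal convergence (with $C=X$ killing the $\mu$-part, which is indeed the reason that assumption cannot be dropped). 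There is, however, one genuine gap: you pass ``to a subsequence with $\lambda^k \wtos \lambda$''. The lemma is stated for arbitrary Banach spaces $Y$, and in that generality bounded sets in $Y^*$ are weak* compact by Banach--Alaoglu but \emph{not} weak* sequentially compact; e.g., for $Y = \ell^\infty$ the evaluation functionals $\delta_n \in Y^*$ form a bounded sequence with no weak* convergent subsequence. You are aware of the issue---your closing paragraph restricts to $Y$ separable or reflexive---but the statement carries no such hypothesis, so as written your argument proves a weaker lemma than the one asserted.

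The repair is exactly what the paper does: extract weak* convergent \emph{subnets} $\{\lambda^{k(i)}\}$, $\{\mu^{k(i)}\}$, indexed by a directed set $I$, via the Banach--Alaoglu--Bourbaki theorem. Every step of your limit passage then survives verbatim: the subnets are bounded because their terms are terms of the original bounded sequences, so the cross terms still vanish (subnets of the convergent sequences $\{G(x^k)\}$, $\{x^k\}$, $\{r^k\}$ converge to the same limits); the dual pairings converge along the net by definition of weak* convergence; and since the weak* topology is Hausdorff, uniqueness of net limits still identifies $\bar v = G'(\bar x)\adjoint \lambda + \mu \in \M(\bar x, 0)$. The only other (cosmetic) difference from the paper is that you invoke weak*-to-weak* continuity of the fixed adjoint $G'(\bar x)\adjoint$ together with $\norm{G'(x^k)\adjoint - G'(\bar x)\adjoint} \to 0$, whereas the paper computes $\dual{G'(x^{k(i)})\adjoint \lambda^{k(i)}}{x}_X = \dual{\lambda^{k(i)}}{G'(x^{k(i)})\,x}_Y \to \dual{\lambda}{G'(\bar x)\,x}_Y$ directly; these are equivalent. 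With subsequences replaced by subnets, your write-up coincides with the paper's proof.
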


\begin{proof}
(a) Let $\bar v \in 
			w^*$-$\limsup_{
				\substack{x \to \bar x \\ r \searrow 0}
			} \M(x,r)
			$
			be given.
			Then
			there exist  sequences
			$\{x^k\}\subset X$, 
			$\{r^k\}\subset [0,+\infty) $, 
			and $\{v^k\} \subset X\dualspace$ such that
			$x^k \to \bar x$,
			$r^k \searrow 0$,
			$v^k \wtos \bar v$,
			and
			$v^k \in \MM(x^k, r^k)$ for all $k\in\N$.
			By assumption, 
			there exist bounded sequences
			$\{\lambda^k\}\subset \SpaceY^*$ and $\{\mu^k\} \subset X\dualspace$
			with
			$\mu^k \in \NN_C(x^k)$,
			$\dual{ \lambda^k }{ y - G(x^k) }_{\SpaceY} \leq r^k$ for all $y \in \ConY$ and $k\in\N$,
			and
			$v^k=G'(x^k)\adjoint \lambda^k + \mu^k \wtos \bar v$. By the
			Banach--Alaoglu--Bourbaki theorem, the sequences $\{\lambda^k \}$ and $\{\mu^k\}$ 
			possess weak* convergent subnets,
			indexed by $k(i)$, $i \in I$, where $I$ is a directed set.
			The associated weak* limits are denoted by $\lambda$ and $\mu$, respectively.
			We get
			\begin{align*}
				\forall y\in K\colon\quad
				\dual{\lambda}{y - G(\bar x)}_{\SpaceY}
				&\leftarrow
				\dual{\lambda^{k(i)}}{G(\bar x) - G(x^{k(i)})}_{\SpaceY}
				+
				\dual{\lambda^{k(i)}}{y - G(\bar x)}_{\SpaceY}
				\\
				&=
				\dual{\lambda^{k(i)}}{y - G(x^{k(i)})}_{\SpaceY}
				\le
				r^{k(i)}
				\to
				0.
			\end{align*}
			Note that the first limit uses the boundedness of the net $\{\lambda^{k(i)}\}$
			and this follows from the boundedness of the sequence $\{\lambda^k\}$.
			Thus, $\lambda \in \NN_K(G(\bar x))$ holds.
			Similarly,
			\begin{equation*}
				\forall x\in C\colon\quad
				\dual{\mu}{x - \bar x}_X
				=
				\lim\limits_{i \in I} \dual{\mu^{k(i)}}{x - x^{k(i)}}_X
				\le
				0
			\end{equation*}
			implies
			$\mu \in \NN_C(\bar x)$.
			Again, we used the boundedness of $\{\mu^{k(i)}\}$ which follows from the boundedness of $\{\mu^k\}$.
			Hence, it holds
			$G'(\bar x)\adjoint \lambda + \mu \in \MM(\bar x, 0)$.
				Using again the boundedness of $\{\lambda^{k(i)}\}$ and $G'(x^{k(i)}) \to G'(\bar x)$, we have
				\begin{equation*}
					\dual{ G'(x^{k(i)})\adjoint \lambda^{k(i)} }{x}_X
					=
					\dual{ \lambda^{k(i)} }{G'(x^{k(i)}) \, x}_Y
					\to
					\dual{ \lambda }{G'(\bar x) \, x}_Y
					=
					\dual{G'(\bar x) \adjoint \lambda }{ x}_Y
				\end{equation*}
				for all $x \in X$.
				This shows
				$G'(x^{k(i)})\adjoint \lambda^{k(i)}  \wtos G'(\bar x)\adjoint \lambda$.
			Due to the convergence 
			$v^{k(i)}=G'(x^{k(i)})\adjoint \lambda^{k(i)} + \mu^{k(i)} \wtos G'(\bar x)\adjoint \lambda + \mu$,
			we obtain from the uniqueness of the weak* limit point that
			$\bar v = G'(\bar x)\adjoint \lambda + \mu \in \MM(\bar x, 0)$,
			i.e., $\bar x $ is sw-AKKT regular.
	
(b) This follows by almost the same proof.
			Note that the complete continuities of $G$ and $G'$
			imply
			$G(x^{k(i)}) \to G(\bar x)$ and $G'(x^{k(i)}) \to G'(\bar x)$, respectively.
\end{proof}

Unfortunately,
the condition $C = X$ needed in the proof for the second statement regarding w-AKKT regularity 
is quite restrictive, but cannot be omitted as long as $X$ is infinite dimensional.
Otherwise, by reprising the above proof strategy,
we get bounded nets $\{x^{k(i)}\}\subset X$ and $\{\mu^{k(i)}\}\subset X^*$ satisfying
$x^{k(i)} \wto \bar x$, $\mu^{k(i)} \wtos \mu$, and $\mu^{k(i)} \in \NN_C(x^{k(i)})$ for all $i \in I$.
However, this is not enough to conclude
$\mu \in \NN_C(\bar x)$, see also \Cref{ex:RCQ_but_not_wCCCCP}.

\section{Relations to other Constraint Qualifications}\label{Sec:Relation}

As pointed out in the previous section,
weak, strong-weak, and strong AKKT regularity
are constraint qualifications for \eqref{Eq:Opt} under some additional assumptions on the
problem data. Therefore, the natural question arises how these new constraint
qualifications are related to already existing ones. The most prominent case is discussed in
\Cref{Sub:RCQ} where we show that Robinson's constraint qualification implies 
sw-AKKT regularity. 
As it will turn out, it even implies 
w-AKKT regularity 
under some additional assumptions. 
Afterwards, we consider the relationship between 
sw-AKKT regularity 
and Abadie's constraint qualification in \Cref{Sub:Abadie}. 
Finally, we conclude that 
s-AKKT regularity 
implies that at least Guignard's constraint
qualification holds at the reference point whenever $X$ is reflexive and separable
in \Cref{Sub:Guignard}.

\subsection{Robinson Constraint Qualification}\label{Sub:RCQ}

The most common constraint qualification in Banach spaces is Robinson's constraint
qualification which dates back to the seminal paper \cite{Robinson1976} where it
has been used to characterize variational stability of perturbed nonlinear systems
in Banach spaces. The interpretation of this condition as a constraint qualification
in Banach space programming is due to \cite{Zowe1979}.
The aim of this section is to show that Robinson's constraint qualification is stronger than 
sw-AKKT regularity. 
As we will see later in \Cref{Sub:Box}, it is strictly stronger than
sw-AKKT regularity 
in general.
To proceed, we first recall the definition of Robinson's constraint qualification.

\begin{definition}\label{Dfn:RobinsonCQ}
We say that \emph{Robinson's constraint qualification} (RCQ) holds at a feasible point $\bar x\in \Feas$ 
of \eqref{Eq:Opt} if
the condition
\begin{equation}
	\label{eq:rzkcq}
	Y
	=
	G'(\bar x) \, \RR_C(\bar x) - \RR_K(G(\bar x))
\end{equation}
is valid.
\end{definition}

In the theorem below, we show that validity of RCQ at some reference point always guarantees 
sw-AKKT regularity. 

\begin{theorem}
	\label{lem:RCQ_implies_sw_CCP}
	Assume that RCQ is satisfied at a feasible point $\bar x \in \Feas$ of \eqref{Eq:Opt}.
	Then $ \bar x $ is sw-AKKT regular (and, thus, s-AKKT regular). 
\end{theorem}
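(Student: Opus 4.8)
The plan is to verify the sufficient condition for sw-AKKT regularity provided by \Cref{lem:akkt_bounded}~\ref{item:bounded_swCCP}. That is, I would take arbitrary sequences $\{x^k\}\subset X$, $\{r^k\}\subset[0,+\infty)$, and $\{v^k\}\subset X^*$ satisfying $x^k\to\bar x$, $r^k\searrow 0$, $v^k\wtos\bar v$, and $v^k\in\M(x^k,r^k)$ for all $k\in\N$, and exhibit corresponding \emph{bounded} multiplier sequences $\{\lambda^k\}\subset Y^*$ and $\{\mu^k\}\subset X^*$ with $\mu^k\in\NN_C(x^k)$, $\dual{\lambda^k}{y-G(x^k)}_Y\le r^k$ for all $y\in K$, and $v^k=G'(x^k)^*\lambda^k+\mu^k$. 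Since $v^k\wtos\bar v$, the sequence $\{v^k\}$ is bounded by the uniform boundedness principle; the crux is to transfer this boundedness to the multipliers using the stability afforded by RCQ.

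The key step is a quantitative consequence of RCQ, namely a \emph{metric regularity} / stability estimate. Since RCQ \eqref{eq:rzkcq} states that $G'(\bar x)\RR_C(\bar x)-\RR_K(G(\bar x))=Y$, the Robinson--Ursescu theorem (open mapping theorem for convex multifunctions) yields a constant $\kappa>0$ and a neighborhood in which the constraint system is metrically regular. I would use this to bound the multipliers: for the given decomposition $v^k=G'(x^k)^*\lambda^k+\mu^k$ one estimates $\norm{\lambda^k}_{Y^*}$ (and consequently $\norm{\mu^k}_{X^*}$) in terms of $\norm{v^k}_{X^*}$ and $r^k$. Concretely, I expect to show that RCQ provides a $\delta>0$ such that the unit ball $B_\delta(0)\subset Y$ is contained in $G'(\bar x)(\RR_C(\bar x)\cap B_1(0))-(\RR_K(G(\bar x))\cap B_1(0))$, and then, by continuity of $G'$ and a perturbation argument, a uniform version holds at all $x^k$ for $k$ large. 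Testing the multiplier decomposition against suitable elements realizing this covering, and using the defining inequalities $\dual{\mu^k}{x-x^k}_X\le 0$ for $x\in C$ and $\dual{\lambda^k}{y-G(x^k)}_Y\le r^k$ for $y\in K$, produces the bound $\norm{\lambda^k}_{Y^*}\le C(\norm{v^k}_{X^*}+r^k+1)$ for some constant $C$. Since $\{v^k\}$ is bounded and $r^k\to 0$, boundedness of $\{\lambda^k\}$ follows, and then $\mu^k=v^k-G'(x^k)^*\lambda^k$ is bounded as well.

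The main obstacle will be establishing this uniform stability estimate at the perturbed points $x^k$ rather than only at $\bar x$. RCQ is a condition at $\bar x$, so I must argue that the covering/surjectivity property is \emph{stable} under the small perturbations $x^k\to\bar x$ and $G(x^k)\to G(\bar x)$. This requires the continuous Fréchet differentiability of $G$ (so that $G'(x^k)\to G'(\bar x)$ in operator norm and $G(x^k)\to G(\bar x)$) together with the robustness of metric regularity / the open covering constant under small operator perturbations --- a standard but delicate feature of the Robinson--Ursescu framework. A subtle point is that the radial cones $\RR_C(x^k)$ and $\RR_K(G(x^k))$ vary with $k$; I would handle this by working with a fixed feasible reference element and the recession-cone structure, or by absorbing the variation into the neighborhood on which the uniform estimate holds, shrinking it as $k\to\infty$ is not needed since we only require the bound for all large $k$.

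Once boundedness of $\{\lambda^k\}$ and $\{\mu^k\}$ is secured, the hypothesis of \Cref{lem:akkt_bounded}~\ref{item:bounded_swCCP} is met, and that lemma immediately delivers sw-AKKT regularity of $\bar x$; the implication to s-AKKT regularity then follows from the general hierarchy noted after \Cref{Def:AKKTCQ}. Thus the entire argument reduces to the boundedness claim, and the real content lies in the quantitative stability estimate derived from RCQ.
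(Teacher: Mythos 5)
Your proposal is correct and follows essentially the paper's own route: you verify \Cref{lem:akkt_bounded}~\ref{item:bounded_swCCP} by bounding the given multipliers through the generalized open mapping theorem \cite[Thm.\ 2.1]{Zowe1979} applied to RCQ at $\bar x$, combined with the sign conditions $\dual{\mu^k}{w-x^k}_X\le 0$ and $\dual{\lambda^k}{y-G(x^k)}_Y\le r^k$, yielding $\norm{\lambda^k}_{Y^*}\le C\,(\norm{v^k}_{X^*}+r^k+1)$ and hence boundedness of both multiplier sequences. The obstacle you flag (the covering at the perturbed points $x^k$, where $\RR_K(G(x^k))$ is not even well defined since $G(x^k)$ need not lie in $K$) is resolved in the paper exactly by the fix you anticipate: no covering at $x^k$ is ever established; instead the fixed elements $w\in C$ and $y\in K$ realizing the covering at $\bar x$ are reused, and the discrepancy $\delta^k:=G'(\bar x)(w-\bar x)-G'(x^k)(w-x^k)+G(\bar x)-G(x^k)$ is bounded by an $s^k\to 0$ independent of $z$, so the resulting term $s^k\norm{\lambda^k}_{Y^*}$ is absorbed into the left-hand side for large $k$.
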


\begin{proof}
	We check that the assumption of \Cref{lem:akkt_bounded}~\ref{item:bounded_swCCP}
	is satisfied.
	To this end,
	let sequences
	$\{x^k\}\subset X$, $\{r^k\}\subset [0,+\infty) $, and $\{ v^k \} \subset X\dualspace$ with
	$x^k \to \bar x$,
	$r^k \searrow 0$,
	$v^k \wtos \bar v$,
	and
	$v^k \in \MM(x^k, r^k)$
	for all $k\in\N$
	be given.
	By definition of $\MM(x^k, r^k)$,
	this implies the existence of sequences
	$\{\lambda^k\}\subset\SpaceY^*$ and $\{ \mu^k \} \subset X\dualspace$
	with
	$\mu^k \in \NN_C(x^k)$ and
	$\dual{ \lambda^k }{ y - G(x^k) }_{\SpaceY} \leq r^k$ for all $y \in \ConY$
	and $k\in\N$, as well as
	$G'(x^k)\adjoint \lambda^k + \mu^k = v^k \wtos \bar v$.
	It suffices to verify the boundedness of $\{\lambda^k\}$ and $\{\mu^k\}$.

	Under assumption \eqref{eq:rzkcq},
	we can apply the generalized open mapping theorem
	\cite[Thm.\ 2.1]{Zowe1979}
	and obtain the existence of $M > 0$,
	such that for all $z \in Y$ with $\norm{z}_{\SpaceY}\le1$,
	there exist $w \in C\cap B_1(\bar x)$ 
	and $y \in K\cap B_1(G(\bar x))$
	such that
	\begin{equation*}
		-\frac{z}{M} = G'(\bar x) \, (w - \bar x) - (y - G(\bar x)).
	\end{equation*}
	We fix an arbitrary $z \in Y$ with $\norm{z}_{\SpaceY} \le 1$
	and the corresponding vectors $w$ and $y$ from above. 
	Then let us write
	\begin{equation*}
		-\frac{z}{M}
		=
		G'(x^k) \, (w - x^k) - (y - G(x^k)) + \delta^k
	\end{equation*}
	with
	$\delta^k := G'(\bar x) \, (w - \bar x) - G'(x^k) \, (w - x^k) + G(\bar x) - G(x^k)$.
	We have the estimate
	\begin{align*}
		\norm{ \delta^k }_{\SpaceY}
		&\le
		\norm{G'(\bar x) \, (x^k - \bar x) }_{\SpaceY}
		+
		\norm{G'(\bar x) - G'(x^k)} \, \norm{w - x^k}_X
		+
		\norm{G(\bar x) - G(x^k)}_{\SpaceY}
		\\
		&\le
		\norm{G'(\bar x) \, (x^k - \bar x) }_{\SpaceY}
		+
		\norm{G'(\bar x) - G'(x^k)} \, (1 + \norm{\bar x - x^k}_X)
		+
		\norm{G(\bar x) - G(x^k)}_{\SpaceY}
		\\&=:
		s^k,
	\end{align*}
	where $s^k \to 0$ holds by continuity of $ G $ and $ G' $.
	Note that $s^k$ is independent of $z$.
	We have
	\begin{align*}
		\dual[\Big]{\lambda^k}{\frac{z}{M}}_{\SpaceY}
		&=
		-\dual{G'(x^k)^*\lambda^k}{w - x^k}_{X} + \dual{\lambda^k}{y - G(x^k)}_{\SpaceY} - \dual{\lambda^k}{\delta^k}_{\SpaceY}
		\\
		&\le
		\dual{\mu^k - v^k}{w - x^k}_{X} + r^k + \norm{\delta^k}_{\SpaceY}\, \norm{\lambda^k}_{\SpaceY^*}
		\\
		&\le
		\norm{v^k}_{X^*} \, (1 + \norm{\bar x - x^k}_X) + r^k + s^k \, \norm{\lambda^k}_{\SpaceY^*}.
	\end{align*}
	In the last inequality, we used
	$\dual{\mu^k}{w - x^k}_{X} \le 0$ due to $\mu^k \in \NN_C(x^k)$ and $w \in C$.
	Since $z \in Y$ with $\norm{z}_{\SpaceY} \le 1$ was arbitrary
	and since the right-hand side in the above inequality is independent of $z$, this implies
	\begin{equation*}
		\norm{\lambda^k}_{\SpaceY^*}
		\le
		M \, \paren[\big]{ \norm{v^k}_{X^*} \, (1 + \norm{\bar x - x^k}_X) + r^k + s^k \, \norm{\lambda^k}_{\SpaceY^*}}.
	\end{equation*}
	Due to $s^k \to 0$, we can conclude
	\begin{equation*}
		\norm{\lambda^k}_{\SpaceY^*}
		\le
		2 \, M \, \paren{ \norm{v^k}_{X^*} \, (1 + \norm{\bar x - x^k}_X) + r^k }
	\end{equation*}
	for all large enough $k\in\N$.
	Since $\{v^k\}$ and $\{x^k\}$ are bounded,
	the boundedness of $\{\lambda^k\}$ follows.
	Finally, due to $\mu^k = v^k - G'(x^k)\adjoint \, \lambda^k$, $\{\mu^k\}$ is bounded as well.
\end{proof}

In the theorem below, we investigate situations where validity of RCQ already implies
w-AKKT regularity. 

\begin{theorem}
	\label{lem:RCQ_implies_w_CCP}
	Assume that RCQ is satisfied at a feasible point $\bar x \in \Feas$ of \eqref{Eq:Opt}.
	Further, assume $C = X$
	and that $G$ and $G'$ are completely continuous.
	Then $ \bar x $ is w-AKKT regular. 
\end{theorem}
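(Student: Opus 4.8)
Since we now have $C = X$, the normal cone $\Nor_C(x) = \{0\}$, so the multiplier $\mu^k$ vanishes and every element of $\MM(x^k, r^k)$ has the form $G'(x^k)^* \lambda^k$. Thus the task reduces to the following: given sequences $\{x^k\}\subset X$, $\{r^k\}\subset[0,+\infty)$, and $\{v^k\}\subset X^*$ with $x^k \wto \bar x$, $r^k \searrow 0$, $v^k \wtos \bar v$, and $v^k = G'(x^k)^*\lambda^k$ satisfying the multiplier inequality $\dual{\lambda^k}{y - G(x^k)}_Y \le r^k$ for all $y \in K$, we must produce a (possibly different) bounded sequence of multipliers yielding the same $v^k$. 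The natural strategy is to show that the original sequence $\{\lambda^k\}$ is itself bounded, by reprising the RCQ-based estimate from the proof of \Cref{lem:RCQ_implies_sw_CCP}.

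\textbf{First I would invoke the generalized open mapping theorem} \cite[Thm.\ 2.1]{Zowe1979}, which under RCQ~\eqref{eq:rzkcq} yields a constant $M > 0$ such that for every $z \in Y$ with $\norm{z}_Y \le 1$ there exist $y \in K \cap B_1(G(\bar x))$ (the $C$-component now being trivial) with $-z/M = -(y - G(\bar x)) + G'(\bar x)(w - \bar x)$ for suitable $w$; with $C = X$ the $w$-part still contributes but pairs against $\lambda^k$ via $G'(x^k)^*\lambda^k = v^k$. Writing the perturbation term $\delta^k := G'(\bar x)(w - \bar x) - G'(x^k)(w - x^k) + G(\bar x) - G(x^k)$ exactly as before, I would derive the bound
\begin{equation*}
	\dual[\Big]{\lambda^k}{\frac{z}{M}}_Y
	\le
	\norm{v^k}_{X^*}\,(1 + \norm{\bar x - x^k}_X) + r^k + s^k\,\norm{\lambda^k}_{Y^*},
\end{equation*}
where $s^k \ge \norm{\delta^k}_Y$ is independent of $z$. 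Taking the supremum over $\norm{z}_Y \le 1$ and absorbing the $s^k \norm{\lambda^k}_{Y^*}$ term (which is legitimate for large $k$ since $s^k \to 0$) gives $\norm{\lambda^k}_{Y^*} \le 2M(\norm{v^k}_{X^*}(1 + \norm{\bar x - x^k}_X) + r^k)$, so $\{\lambda^k\}$ is bounded because $\{v^k\}$ and $\{x^k\}$ are.

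\textbf{The one point demanding care is the convergence $s^k \to 0$ under merely weak convergence $x^k \wto \bar x$.} In \Cref{lem:RCQ_implies_sw_CCP} the estimate relied on $x^k \to \bar x$ strongly, which forced $\norm{G'(\bar x)(x^k - \bar x)}_Y \to 0$ and $\norm{G(\bar x) - G(x^k)}_Y \to 0$ by continuity. Here weak convergence alone is insufficient, and this is precisely where the complete continuity hypotheses enter decisively: complete continuity of $G$ gives $G(x^k) \to G(\bar x)$ strongly, while complete continuity of $G'$ gives $G'(x^k) \to G'(\bar x)$ in operator norm, and—again because a weakly convergent sequence is bounded—the compact operator $G'(\bar x)$ maps the weakly convergent $\{x^k - \bar x\}$ to a strongly null sequence, so $\norm{G'(\bar x)(x^k - \bar x)}_Y \to 0$. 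Each of the three terms composing $s^k$ therefore tends to zero, and the boundedness argument goes through verbatim. Once $\{\lambda^k\}$ is bounded, the hypothesis of \Cref{lem:akkt_bounded}~\ref{item:bounded_wCCP} is met (with the original $\lambda^k$), and that lemma delivers w-AKKT regularity of $\bar x$. The main obstacle is thus entirely contained in upgrading the estimates to the weak-convergence setting, which the complete continuity assumptions are tailored to resolve.
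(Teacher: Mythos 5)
Your proposal is correct and follows essentially the same route as the paper: the published proof likewise verifies the hypothesis of \Cref{lem:akkt_bounded}~\ref{item:bounded_wCCP} by transferring the RCQ-based boundedness estimate from \Cref{lem:RCQ_implies_sw_CCP}, using the boundedness of weakly and weak* convergent sequences and invoking the complete continuity of $G$ and $G'$ together with the compactness of $G'(\bar x)$ to conclude $s^k \to 0$. Your identification of the $s^k \to 0$ step as the only point requiring the new hypotheses, and your three-term treatment of $\delta^k$, match the paper's argument exactly.
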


\begin{proof}
	For the verification of \Cref{lem:akkt_bounded}~\ref{item:bounded_wCCP},
	we can transfer the proof of \Cref{lem:RCQ_implies_sw_CCP} to the 
	situation at hand.
	We have to use the boundedness of weakly and weak* convergent
	sequences.
	Moreover,
	the complete continuity of $G$ and $G'$
	as well as the compactness of $G'(\bar x)$
	have to be used to conclude $s^k \to 0$,
	where $s^k$ is defined as above.
\end{proof}

By means of an example, we show that
RCQ does not imply 
w-AKKT regularity 
in general.

\begin{example}
	\label{ex:RCQ_but_not_wCCCCP}
	We consider $X := \ell^2$
	and its unit ball
	$C := \set{ x \in \ell^2 \given \norm{x}_{\ell^2} \le 1}$.
	We identify $X\dualspace$ with $X$.
	Furthermore, we assume that no further constraints are present,
	i.e., $Y = K = \{0\}$ and $G \colon X \to Y$ is the zero mapping.
	Then $\bar x := e^1 / \sqrt{2}$ is an interior point of $C$ and, consequently,
	\eqref{eq:rzkcq} is satisfied at $\bar x$.
	We define the sequence $\{x^k\}\subset C$ by means of
	$x^k := (e^1 + e^{k+1}) / \sqrt{2}$ for each $k\in\N$.
	Above, $e^n\in\ell^2$ denotes the $n$-th unit sequence in $\ell^2$
	for each $n\in\N$.
	For $k \in \N$, it is easy to see that we have $x^k \in \NN_C(x^k)$.
	In particular, we obtain
	$x^k \in \MM(x^k, 0)$ for all $k\in\N$.
	From $x^k \weakly \bar x$, we infer
	\begin{equation*}
		\bar x \in 
		\wStarlimsup_{
			\substack{x \wto \bar x \\ r \searrow 0}	
		} \M(x,r),
	\end{equation*}
	but
	$\bar x \not\in \MM(\bar x, 0) = \Nor_C( \bar x) = \{0\}$, where
	the first equality holds since the feasible set is defined by abstract
	constraints only, whereas the second equality exploits the fact that
	$ \bar x $ is an interior point of $ C $.
	Hence, 
	$\bar x$ is not w-AKKT regular. 
	\hfill$\Diamond$
\end{example}
It is clear that similar examples can be constructed
in all infinite-dimensional Hilbert spaces.

\subsection{Abadie Constraint Qualification}\label{Sub:Abadie}

In the finite-dimensional setting, it was shown in \cite{AndreaniMartinezRamosSilva2016} 
that in case of its presence, AKKT regularity implies 
validity of Abadie's constraint qualification
which originates from \cite{Abadie1965}.
Here, we want to generalize this observation to the infinite-dimensional situation.
Let us first state an appropriate notion of Abadie's constraint qualification which
applies to the general situation discussed in this paper.

\begin{definition}
	Let $\bar x\in\Feas$ be a feasible point of \eqref{Eq:Opt}.
	We say that \emph{Abadie's constraint qualification} (ACQ) is valid at $\bar x$ if
	\begin{equation}
		\label{eq:sACQ}
		\Tng_\Feas(\bar x) = \mathcal{L}_{\Feas}(\bar x)
	\end{equation}
	holds and $\M(\bar x,0)$ is weak* closed.
\end{definition}

Recall that $\mathcal L_\Feas(\bar x)$ from \eqref{eq:linearization_cone}
denotes the linearization cone to $\Feas$ at $\bar x$, and
that $\MM(\bar x,0)$ can be used to characterize the KKT conditions,
see \eqref{Eq:SeqMbarx0}, which shows that the relation
\begin{equation*}
   \M(\bar x,0) = G'(\bar x)^* \Nor_K( G(\bar x)) + \Nor_C (\bar x)
\end{equation*}
holds at the given feasible point $ \bar x \in\Feas $.
We note that demanding $\M(\bar x,0)$ to be weakly* closed is, in general, indispensable
in the definition of ACQ in order to guarantee that it is a constraint qualification in
the narrower sense. Indeed, the polarity relation 
\begin{equation}\label{eq:Farkas}
   \mathcal L_\Feas(\bar x)^\circ = G'(\bar x)^* \Nor_K( G(\bar x)) + \Nor_C (\bar x) = \M(\bar x,0),
\end{equation}
which comes for free in the context of standard finite-dimensional nonlinear programming,
only holds if $\M(\bar x,0)$ is weakly* closed, see \cite[Sec.\ 2]{Kurcyusz1976}
or \cite[Lem.\ 1]{Zalinescu1978}. Observe that in case where $X$ is reflexive, the closedness
of $\M(\bar x,0)$ already yields its weak* closedness, see \Cref{rem:closedness_via_CCP} as well.
The above arguments yield the following well-known result which is
included for the reader's convenience.

\begin{proposition}\label{lem:ACQ_is_a_CQ}
	Let $\bar x\in\Feas$ be a local minimizer of \eqref{Eq:Opt}
	where ACQ holds. Then $\bar x$ is a KKT point of \eqref{Eq:Opt}.
\end{proposition}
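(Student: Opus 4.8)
The plan is to deduce the KKT condition $-f'(\bar x)\in\M(\bar x,0)$ by combining a purely primal first-order optimality condition with the two ingredients hidden in ACQ: the equality \eqref{eq:sACQ} of the tangent and linearization cones, and the weak* closedness of $\M(\bar x,0)$, which is precisely what activates the polarity relation \eqref{eq:Farkas}. Accordingly, I would first establish that local optimality forces $-f'(\bar x)\in\Tng_\Feas(\bar x)\polar$, and then transport this membership into $\M(\bar x,0)$.

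For the primal step, I would fix an arbitrary direction $d\in\Tng_\Feas(\bar x)$ and choose, by definition of the contingent cone, sequences $\{x^k\}\subset\Feas$ and $\{t^k\}$ with $x^k\to\bar x$, $t^k\searrow 0$, and $(x^k-\bar x)/t^k\to d$. Since $\bar x$ is a local minimizer and $x^k\to\bar x$, we have $f(x^k)\ge f(\bar x)$ for all large $k$. A first-order Taylor expansion of the Fréchet differentiable $f$ at $\bar x$ gives $f(x^k)-f(\bar x)=\dual{f'(\bar x)}{x^k-\bar x}_X+\oo(\norm{x^k-\bar x}_X)$; dividing by $t^k$ and using that $\norm{x^k-\bar x}_X/t^k\to\norm{d}_X$ stays bounded, the remainder term vanishes in the limit, so $0\le\dual{f'(\bar x)}{d}_X$. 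As $d\in\Tng_\Feas(\bar x)$ was arbitrary, this yields $-f'(\bar x)\in\Tng_\Feas(\bar x)\polar$.

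Next I would invoke ACQ. Its first part \eqref{eq:sACQ} gives $\Tng_\Feas(\bar x)=\mathcal{L}_\Feas(\bar x)$, whence the polar cones agree and $-f'(\bar x)\in\mathcal{L}_\Feas(\bar x)\polar$. Its second part, the weak* closedness of $\M(\bar x,0)$, is exactly the hypothesis under which the polarity (Farkas-type) relation \eqref{eq:Farkas} holds, so that $\mathcal{L}_\Feas(\bar x)\polar=\M(\bar x,0)$. Chaining these identities delivers $-f'(\bar x)\in\M(\bar x,0)$. Finally, by the representation \eqref{Eq:SeqMbarx0}, this membership produces multipliers $\bar\lambda\in\Nor_K(G(\bar x))$ and $\mu\in\Nor_C(\bar x)$ with $-f'(\bar x)=G'(\bar x)^*\bar\lambda+\mu$; rearranging gives $-\Lag'_x(\bar x,\bar\lambda)=\mu\in\Nor_C(\bar x)$, which together with $\bar\lambda\in\Nor_K(G(\bar x))$ is precisely the KKT system of \cref{Dfn:KKT}.

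The only genuinely non-routine ingredient is the polarity relation \eqref{eq:Farkas}, whose validity truly hinges on the weak* closedness of $\M(\bar x,0)$: without it one only retains the inclusion $\M(\bar x,0)\subset\mathcal{L}_\Feas(\bar x)\polar$, which points the wrong way and is insufficient to conclude. Since ACQ builds in exactly this closedness and the polarity relation is available from \cite{Kurcyusz1976,Zalinescu1978}, I expect the argument to be short, with the tangent-cone first-order inequality and the cited Farkas relation doing all the work.
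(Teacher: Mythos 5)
Your proof is correct and takes essentially the same route as the paper's: a primal first-order optimality condition expressed via a tangent-cone polar, then ACQ together with the Farkas-type polarity relation \eqref{eq:Farkas}, whose validity is precisely what the weak* closedness of $\M(\bar x,0)$ built into ACQ guarantees. The only immaterial difference is that the paper states the first-order condition over the weak tangent cone $\Tng^w_\Feas(\bar x)$ and uses the sandwich $\Tng_\Feas(\bar x)\subset\Tng^w_\Feas(\bar x)\subset\LL_\Feas(\bar x)$ (which supports its subsequent remark that \eqref{eq:sACQ} could be weakened to $\Tng^w_\Feas(\bar x)=\LL_\Feas(\bar x)$), whereas your argument with the strong tangent cone suffices for the proposition as stated.
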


\begin{proof}
	Since $\bar x\in\Feas$ is a local minimizer of \eqref{Eq:Opt}, 
	it holds $f'(\bar x)d\geq 0$ for all directions $d\in\TT^w_\Feas(\bar x)$, i.e.,
	$-f'(\bar x)\in\TT^w_\Feas(\bar x)^\circ$. 
	Recalling that 
	$\TT_\Feas(\bar x)\subset\TT_\Feas^w(\bar x)\subset\mathcal L_\Feas(\bar x)$
	holds in general, the validity of ACQ guarantees 
	$\TT^w_\Feas(\bar x)=\mathcal L_\Feas(\bar x)$.
	Thus, the above arguments show $\TT^w_\Feas(\bar x)^\circ=\M(\bar x,0)$,
	i.e., $-f'(\bar x)\in\M(\bar x,0)$ follows.
	Hence, $\bar x$ is a KKT point of \eqref{Eq:Opt}.
\end{proof}

From this proof it is clear that \eqref{eq:sACQ}
could be weakened to $\TT_\Feas^w(\bar x) = \LL_\Feas(\bar x)$
in the definition of ACQ.

In order to prove the main result of this section, we need two technical preliminaries which will be
provided below.

\begin{lemma}\label{Lem:eqivFdiffNorm}
Suppose that $ X^* $ is separable. Then there is an equivalent norm
on $ X $
which is continuously Fréchet differentiable in $ X \setminus \{ 0 \} $.
\end{lemma}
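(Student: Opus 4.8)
The plan is to obtain the desired norm by a duality argument: I would first renorm the dual space $X^*$ so that its norm becomes locally uniformly rotund (LUR) while remaining a \emph{dual} norm, and then transfer the rotundity of the dual norm into Fréchet differentiability of the corresponding predual norm on $X$ via Šmulyan's lemma.

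First I would exploit the separability of $X^*$. Since a separable dual forces $X$ itself to be separable, I have at my disposal a countable dense subset $\{f_n\}$ of the unit sphere of $X^*$ and a countable dense subset of the unit ball of $X$. Using these, one constructs an equivalent norm $\nu$ on $X^*$ of a quadratic type, roughly $\nu(f)^2 = \|f\|_*^2 + \sum_n 2^{-n}\,(\text{suitable functionals of } f)^2$, arranged so that $\nu$ is simultaneously (i) weak*-lower semicontinuous, so that it is the dual norm of an equivalent norm on $X$, and (ii) locally uniformly rotund. This is exactly the content of the classical renorming theorem for spaces with separable dual; rather than reproving it, I would cite the standard reference (e.g.\ Deville--Godefroy--Zizler or Fabian et al.) for the statement that \emph{if $X^*$ is separable, then $X$ admits an equivalent norm whose dual norm is LUR}. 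I denote this equivalent norm on $X$ by $\|\cdot\|$.

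Next I would invoke Šmulyan's duality between rotundity and smoothness. If the dual norm is LUR, then at each point $x \neq 0$ the norming functional is unique and every sequence of almost-norming functionals converges to it in norm; this is precisely Šmulyan's criterion for Fréchet differentiability of $\|\cdot\|$ at $x$. Hence $\|\cdot\|$ is Fréchet differentiable at every point of $X \setminus \{0\}$. To upgrade this to \emph{continuous} Fréchet differentiability, I would appeal to convexity: the norm is a continuous convex function, and a convex function that is Fréchet differentiable at every point of an open set is automatically $C^1$ there, its gradient being norm-to-norm continuous (a standard fact from convex analysis / the theory of monotone operators). Applying this on the open set $X \setminus \{0\}$ yields continuous Fréchet differentiability, as required.

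The main obstacle is the renorming step producing a dual LUR norm: the delicate point is guaranteeing local uniform rotundity and weak*-lower semicontinuity \emph{simultaneously}, so that the rotund norm genuinely arises as a dual norm with a well-behaved predual. The subsequent transfer via Šmulyan's lemma and the convexity argument for $C^1$-regularity are then routine.
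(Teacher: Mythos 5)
Your proposal is correct and coincides with the paper's own (one-line) proof, which simply combines the corresponding results from Fabian et al.\ \cite[Cor.\ 8.5 and Thm.\ 8.19]{Fabian2001}: the dual LUR renorming available when $X^*$ is separable, Šmulyan's duality yielding Fréchet differentiability of the predual norm, and the standard fact that a Fréchet differentiable norm (being convex) is automatically $C^1$ on $X \setminus \{0\}$. You have merely unpacked the content of those cited theorems, so there is nothing to add.
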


\begin{proof}
Combine the results from \cite[Cor.\ 8.5 and Thm.\ 8.19]{Fabian2001}.
\end{proof}

In the remaining parts of this section,
we will assume that the space $ X $ is equipped with the norm from the
above lemma.
Recall that reflexivity and separability of $X$ together
imply separability of $X\dualspace$.
\begin{lemma}
	\label{lem:some_function}
	Suppose that $ X $ is reflexive and separable.
	Fix a feasible point $\bar x\in\Feas$ of \eqref{Eq:Opt}.
	For every $ v \in \widehat\NN_\Feas(\bar x)$ there
	is a continuously Fréchet differentiable function $ h\colon X \to \R $
	with $h'(\bar x) = -v $
	such that $ h $
	restricted to $\Feas$
	achieves a unique
	global minimum at $ \bar x $. Moreover,
	the function $h$ is weakly sequentially lower semicontinuous.
\end{lemma}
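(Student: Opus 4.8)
The plan is to
construct the desired function $h$ explicitly from the Fréchet normal vector $v$
by combining the linear functional $-v$ with a quadratic penalty term measuring
the distance from $\bar x$. Since $v \in \widehat\NN_\Feas(\bar x)$, the defining
property yields $\dual{v}{x - \bar x}_X \le \oo(\norm{x - \bar x}_X)$ for all
$x \in \Feas$. My first step is therefore to set
\begin{equation*}
	h(x) := -\dual{v}{x - \bar x}_X + \norm{x - \bar x}_X^2
\end{equation*}
as the candidate. By construction $h(\bar x) = 0$, and differentiating gives
$h'(\bar x) = -v$ because the quadratic term contributes a derivative that vanishes
at $\bar x$. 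The continuous Fréchet differentiability of $h$ on all of $X$ follows
from the assumption (invoked via \Cref{Lem:eqivFdiffNorm}, using the renormed space)
that $\norm{\cdot}_X$ is continuously Fréchet differentiable away from the origin,
together with the fact that $\norm{\cdot}_X^2$ is then also continuously
differentiable \emph{including} at the origin, since the square kills the
nonsmoothness at $0$.

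Next I would verify the unique global minimum on $\Feas$. For any feasible
$x \ne \bar x$, the Fréchet normal inequality gives, as $x \to \bar x$,
\begin{equation*}
	h(x) = -\dual{v}{x - \bar x}_X + \norm{x - \bar x}_X^2
	\ge -\oo(\norm{x - \bar x}_X) + \norm{x - \bar x}_X^2,
\end{equation*}
which is strictly positive for $x$ close enough to $\bar x$; this secures a strict
\emph{local} minimum. To upgrade to a strict global minimum, the standard trick is
to compose the whole construction with a bounded, strictly increasing smooth
reparametrization of the offending quantity, or more simply to replace $h$ by a
function of the form $\psi(h)$ or to add a term that forces growth; for instance,
replacing the quadratic penalty by a bounded smooth surrogate of $\norm{x-\bar x}_X$
that still behaves quadratically near $\bar x$ but stays positive globally, so that
$h(x) > 0 = h(\bar x)$ for every feasible $x \ne \bar x$. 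The weak sequential lower
semicontinuity is then immediate: $-\dual{v}{\cdot - \bar x}_X$ is weakly continuous
(being an affine continuous functional on the reflexive space $X$), while
$\norm{\cdot - \bar x}_X^2$ (or its surrogate) is convex and continuous, hence weakly
sequentially lower semicontinuous, and a sum of such functions inherits the property.

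The main obstacle I anticipate is reconciling three competing demands on the penalty
term simultaneously: it must (i) be continuously Fréchet differentiable on all of $X$,
including at $\bar x$ itself where a bare norm is nonsmooth; (ii) behave like a
strictly positive quadratic near $\bar x$ to dominate the $\oo(\norm{x-\bar x}_X)$
error and produce a \emph{strict} minimum; and (iii) remain globally coercive or at
least globally positive so the minimum is global rather than merely local, while
\emph{still} preserving weak sequential lower semicontinuity. The cleanest resolution
is to keep the squared renormed norm $\norm{\cdot - \bar x}_X^2$ for local strictness
and smoothness, and handle globality separately by exploiting that $v$ is a
\emph{Fréchet} normal (so the relevant inequality is only asymptotic and the real
work is near $\bar x$) — if global positivity fails, one composes with a suitable
concave smooth cutoff that equals the identity near $0$, is bounded above, and keeps
the composite weakly sequentially lower semicontinuous. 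Verifying that this composite
retains continuous Fréchet differentiability and weak sequential lower semicontinuity
is the delicate bookkeeping, but each individual property is standard once the
building blocks are chosen compatibly.
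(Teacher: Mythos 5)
There is a genuine gap at the very first step, and it is fatal to the construction: a fixed quadratic penalty cannot dominate the error term in the Fréchet normal inequality. The condition $v \in \widehat\NN_\Feas(\bar x)$ only says $\dual{v}{x-\bar x}_X \le \oo(\norm{x-\bar x}_X)$, i.e., the ratio $\dual{v}{x-\bar x}_X/\norm{x-\bar x}_X$ has nonpositive upper limit along $\Feas$; it does \emph{not} prevent $\dual{v}{x-\bar x}_X$ from being positive of order, say, $\norm{x-\bar x}_X^{3/2}$, which beats $\norm{x-\bar x}_X^2$ near $\bar x$. Concretely, take $X = \R^2$, $\Feas = \{(t,t^{3/2}) \mid t \ge 0\}$, $\bar x = (0,0)$, $v = (0,1)$: then $v \in \widehat\NN_\Feas(\bar x)$ since $t^{3/2} = \oo(t)$, but your candidate satisfies
\begin{equation*}
	h\bigl((t,t^{3/2})\bigr) = -t^{3/2} + t^2 + t^3 < 0 = h(\bar x)
\end{equation*}
for all small $t > 0$, so $\bar x$ is not even a \emph{local} minimizer of $h$ on $\Feas$. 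Your inequality ``$h(x) \ge -\oo(\norm{x-\bar x}_X) + \norm{x-\bar x}_X^2$, which is strictly positive for $x$ close enough to $\bar x$'' is exactly where this fails: $\oo(t)$ and $t^2$ are not comparable from below. The correct construction must build the penalty from the modulus function of $v$ itself (a regularized, integrated version of $t \mapsto \sup\{\dual{v}{x-\bar x}_X \mid x \in \Feas,\ \norm{x-\bar x}_X \le t\}$), which is precisely the content of the smooth variational description of Fréchet normals; the paper's proof simply invokes \cite[Thm.\ 1.30(ii)]{Mordukhovich2006} together with the smooth renorm from \Cref{Lem:eqivFdiffNorm}.

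A secondary problem is your patch for globality and weak sequential lower semicontinuity. The theorem from \cite{Mordukhovich2006} produces $h$ as the negative of a \emph{concave} Fréchet differentiable function with a unique global maximum on $\Feas$ at $\bar x$; convexity of $h$ then gives weak sequential lower semicontinuity for free, and differentiability plus convexity yields continuous differentiability (this is the Phelps argument the paper cites). Your proposed ``bounded smooth surrogate'' of the norm cannot be convex (a convex function bounded above on all of $X$ is constant, as one sees by restricting to lines), so after that modification the lower semicontinuity claim no longer follows from the convexity-based argument you give and would need a separate justification; as written, the globality step is a sketch of an idea rather than a proof, and the local step it is meant to extend is already broken.
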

\begin{proof}
	It is well known that every differentiable convex function is already 
	continuously differentiable, c.f.\ \cite[Cor.\ of Prop.~2.8]{Phelps1993}.
	The above assertion therefore follows directly from \cite[Thm.\ 1.30(ii)]{Mordukhovich2006}, 
	since $ X $ admits a continuous differentiable renorm by \Cref{Lem:eqivFdiffNorm}.
\end{proof}

Now, we can transfer the proof of \cite[Thm.\ 4.4]{AndreaniMartinezRamosSilva2016}
to the infinite-dimensional setting.

\begin{theorem}
	\label{thm:ccp_implies_abadie}
	Let $X$ be reflexive and separable.
	Let us assume that $ \bar x $ is an sw-AKKT regular feasible point $\bar x \in \Feas$
	of \eqref{Eq:Opt}.
	Furthermore, assume that condition \eqref{eq:continuity_property_distance_constraint_map}
	holds.
	Then ACQ is valid at $\bar x$.
\end{theorem}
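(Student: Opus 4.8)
The plan is to prove that ACQ holds at $\bar x$ by establishing the two required ingredients: the set equality $\Tng_\Feas(\bar x) = \LL_\Feas(\bar x)$ and the weak* closedness of $\M(\bar x, 0)$. The second ingredient comes essentially for free: since $\bar x$ is sw-AKKT regular, it is in particular s-AKKT regular, and by \Cref{rem:closedness_via_CCP} the set $\M(\bar x, 0)$ is then closed and convex; because $X$ is reflexive, this closedness coincides with weak* closedness. So the real work lies in proving the cone equality. Since $\Tng_\Feas(\bar x) \subset \LL_\Feas(\bar x)$ always holds (via $\Tng_\Feas(\bar x)\subset\Tng^w_\Feas(\bar x)\subset\LL_\Feas(\bar x)$), I need only establish the reverse inclusion $\LL_\Feas(\bar x) \subset \Tng_\Feas(\bar x)$.

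The strategy I would follow, mirroring \cite[Thm.\ 4.4]{AndreaniMartinezRamosSilva2016}, is to pass to polars and exploit the weak* closedness just obtained. Taking polars reverses inclusions, so it suffices to show $\Tng_\Feas(\bar x)^\circ \subset \LL_\Feas(\bar x)^\circ$. By the Farkas-type identity \eqref{eq:Farkas}, which is available precisely because $\M(\bar x,0)$ is weak* closed, we have $\LL_\Feas(\bar x)^\circ = \M(\bar x, 0)$. On the other hand, $\Tng_\Feas(\bar x)^\circ$ coincides with the Fréchet normal cone $\widehat\NN_\Feas(\bar x)$ (for the contingent cone of a general closed set, the Fréchet normal cone is its polar). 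Hence the whole problem reduces to proving the single inclusion
\begin{equation*}
	\widehat\NN_\Feas(\bar x) \subset \M(\bar x, 0).
\end{equation*}

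To prove this inclusion, I would take an arbitrary $v \in \widehat\NN_\Feas(\bar x)$ and invoke \Cref{lem:some_function}: using reflexivity and separability of $X$, there is a continuously Fréchet differentiable, weakly sequentially lower semicontinuous function $h$ with $h'(\bar x) = -v$ that attains a unique global minimum over $\Feas$ at $\bar x$. Applying \Cref{Prop:OptAKKT2} to the problem \eqref{Eq:Opt} with objective $f := h$ — whose hypotheses (reflexivity, weak sequential lower semicontinuity of the objective, and condition \eqref{eq:continuity_property_distance_constraint_map}) are exactly what we have assumed — shows that $\bar x$ is an s-AKKT point for this objective. Then sw-AKKT regularity (which implies s-AKKT regularity) together with \Cref{sAKKTpIsKKT}~(a) forces $\bar x$ to be a KKT point of the modified problem, i.e.\ $-h'(\bar x) = v \in \M(\bar x, 0)$. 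Since $v$ was arbitrary, the desired inclusion follows, and unwinding the polarity argument yields $\LL_\Feas(\bar x) \subset \Tng_\Feas(\bar x)$, completing the proof of ACQ.

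I expect the main obstacle to be verifying cleanly that $\Tng_\Feas(\bar x)^\circ = \widehat\NN_\Feas(\bar x)$ and that the polarity step is reversible in the correct direction in the infinite-dimensional, reflexive setting — in particular, that bipolarity closes up properly so that the polar inclusion $\widehat\NN_\Feas(\bar x)\subset\M(\bar x,0)$ genuinely yields the cone inclusion rather than only its closure. This is where the weak* closedness of $\M(\bar x,0)$ is truly indispensable, since it guarantees $\M(\bar x,0) = \M(\bar x,0)^{\circ\circ} = \LL_\Feas(\bar x)^\circ$ and thereby lets us convert the normal-cone inclusion back into the tangent-cone equality without an unwanted closure operation.
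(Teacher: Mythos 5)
Your reduction to the single inclusion $\widehat{\Nor}_\Feas(\bar x)\subset\M(\bar x,0)$ does not suffice for ACQ, and this is a genuine gap rather than a technicality: the polarity step cannot be reversed because $\Tng_\Feas(\bar x)$ (and $\Tng^w_\Feas(\bar x)$) are in general \emph{not convex} for the nonconvex set $\Feas$. From $\widehat{\Nor}_\Feas(\bar x)\subset\M(\bar x,0)=\LL_\Feas(\bar x)\polar$ (the latter by the Farkas identity \eqref{eq:Farkas}, which the weak* closedness of $\M(\bar x,0)$ does give you), taking polars yields only
\begin{equation*}
	\LL_\Feas(\bar x)
	\subset
	\widehat{\Nor}_\Feas(\bar x)\polar
	=
	\bigl(\Tng^w_\Feas(\bar x)\polar\bigr)\polar,
\end{equation*}
i.e.\ the weakly closed \emph{convex} conical hull of $\Tng^w_\Feas(\bar x)$, which sits \emph{above} $\Tng_\Feas(\bar x)$, not below it. The weak* closedness of $\M(\bar x,0)$ repairs the bipolar on the multiplier side but is powerless against this convexification on the tangent side. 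What your argument actually establishes — take $v\in\widehat{\Nor}_\Feas(\bar x)=\Tng^w_\Feas(\bar x)\polar$, build $h$ via \Cref{lem:some_function}, apply \Cref{Prop:OptAKKT2} and s-AKKT regularity to get $v\in\M(\bar x,0)$ — is precisely the paper's proof of \Cref{lem:sCCP_implies_GCQ}, i.e.\ GCQ. Tellingly, your construction only invokes AKKT regularity with a \emph{strongly} convergent dual sequence, so it uses nothing beyond s-AKKT regularity; but the paper explicitly records as an open problem whether s-AKKT regularity implies ACQ, so no argument of this shape can deliver the theorem.

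The paper's proof circumvents the convexity obstruction by proving the stronger inclusion $\Nor^\textup{L}_\Feas(\bar x)\subset\M(\bar x,0)$ for the \emph{limiting} normal cone. An element $v\in\Nor^\textup{L}_\Feas(\bar x)$ comes with feasible points $x^k\to\bar x$ and $v^k\in\widehat{\Nor}_\Feas(x^k)$ with $v^k\wto v$; for each $k$ one applies \Cref{lem:some_function} \emph{at $x^k$}, then \Cref{Prop:OptAKKT2} at $x^k$ to obtain inner sequences $x^{k,\ell}\to x^k$, $v^{k,\ell}\to v^k$ with $v^{k,\ell}\in\M(x^{k,\ell},0)$, and diagonalizes. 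The diagonal sequence satisfies $x^{k,\ell(k)}\to\bar x$ strongly but $v^{k,\ell(k)}\wto v$ only weakly — this is exactly where sw-AKKT regularity (strong primal, weak* dual convergence) is needed and why it, rather than s-AKKT regularity, appears in the hypothesis. The payoff is the duality $\Nor^\textup{L}_\Feas(\bar x)\polar=\Tng^\textup{C}_\Feas(\bar x)$ from \cite[Thm.\ 3.57]{Mordukhovich2006} (valid since $X$ is reflexive): polarizing the inclusion now yields $\LL_\Feas(\bar x)\subset\Tng^\textup{C}_\Feas(\bar x)$, and since the Clarke tangent cone sits \emph{below} the Bouligand cone, the chain $\LL_\Feas(\bar x)\subset\Tng^\textup{C}_\Feas(\bar x)\subset\Tng_\Feas(\bar x)\subset\Tng^w_\Feas(\bar x)\subset\LL_\Feas(\bar x)$ closes up and forces the tangent-cone equality with no unwanted convex closure. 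In short: to get ACQ you must dualize through a normal cone whose polar is a tangent cone contained in $\Tng_\Feas(\bar x)$, and that is the limiting normal cone, not the Fréchet one.
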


\begin{proof}
	In a preliminary step, we first verify the inclusion 
	$\NN^\textup{L}_\Feas(\bar x) \subset \MM(\bar x, 0)$.
	Let $v \in \NN^\textup{L}_\Feas(\bar x)$ be given.
	Then, due to reflexivity of $X$, there exist sequences $\{x^k\}\subset\Feas$ and $\{v^k\}\subset X^*$
	such that $x^k\to\bar x$, $v^k\wto v$, and $v^k\in\widehat{\NN}_\Feas(x^k)$ for all $k\in\N$.
	Invoking \Cref{lem:some_function} for each $k\in\N$,
	there exists a function $h^k\colon X\to\R$
	such that $x^k$ is the constrained minimizer of $h^k$ over the feasible set $\Feas$
	and $(h^k)'(x^k)=-v^k$.
	An inspection of the corresponding proof shows that \Cref{Prop:OptAKKT2} guarantees the
	existence of  sequences $\{x^{k,\ell}\}\subset C$ and $\{v^{k,\ell}\}\subset X^*$ such that
	\begin{equation*}
		x^{k,\ell} \to x^k,
		\qquad
		v^{k,\ell} \to v^k,
		\qquad
		v^{k,\ell} \in \MM(x^{k,\ell}, 0)\quad\forall\ell\in\N
		.
	\end{equation*}
	Thus, we can pick diagonal sequences $\{x^{k,\ell(k)}\}$ and $\{v^{k,\ell(k)}\}$ with
	\begin{equation*}
		x^{k,\ell(k)} \to \bar x,
		\quad
		v^{k,\ell(k)} \weakly v
		.
	\end{equation*}
	Naturally, we have $v^{k,\ell(k)}\in\M(x^{k,\ell(k)},0)$ for each $k\in\N$.
	Now, 
	sw-AKKT regularity 
	yields $v \in \MM(\bar x, 0)$.
	This shows $\NN^\textup{L}_\Feas(\bar x) \subset \MM(\bar x,0)$.

	To verify the statement of the theorem, note that \Cref{rem:closedness_via_CCP} and 
	the sw-AKKT regularity 
	imply the weak* closedness of $\MM(\bar x, 0)$ since $X$ is assumed to be reflexive.
	As pointed out above, this yields the polar relationship
	$\MM(\bar x,0) = \LL_\Feas(\bar x)\polar$, cf.\ \eqref{eq:Farkas}.
	Using our preliminary step, we therefore have $ \NN^\textup{L}_\Feas(\bar x) \subset 
	\MM(\bar x, 0) = \LL_\Feas(\bar x)\polar$. Taking polars yields
	$\NN^\textup{L}_\Feas(\bar x)\polar \supset \LL_\Feas(\bar x)$.
	Furthermore, \cite[Thm.\ 3.57]{Mordukhovich2006} guarantees
	$\NN^\textup{L}_\Feas(\bar x)\polar = \TT^{\textup{C}}_\Feas(\bar x)$.
	Hence, we get the chain of inclusions
	\begin{equation*}
		\LL_\Feas(\bar x)
		\subset
		\TT^{\textup{C}}_\Feas(\bar x)
		\subset
		\TT_\Feas(\bar x)
		\subset
		\TT_\Feas^w(\bar x)
		\subset
		\LL_\Feas(\bar x),		
	\end{equation*}
	and this finishes the proof.
\end{proof}

The following corollary can be distilled from the proof of \Cref{thm:ccp_implies_abadie}.
\begin{corollary}\label{cor:equality_of_tangent_cones_under_sw_CCP}
	Let $X$ be reflexive and separable.
	Let $\bar x\in\Feas$ be a feasible sw-AKKT regular point of \eqref{Eq:Opt}.
	Finally, assume that condition
	\eqref{eq:continuity_property_distance_constraint_map} holds.
	Then we have the equalities
	\[
		\LL_\Feas(\bar x)
		=
		\TT^\textup{C}_\Feas(\bar x)
		=
		\TT_\Feas(\bar x)
		=
		\TT^w_\Feas(\bar x).
	\]
\end{corollary}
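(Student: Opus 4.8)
The plan is to observe that the proof of \Cref{thm:ccp_implies_abadie} does not merely establish the single equality $\TT_\Feas(\bar x)=\LL_\Feas(\bar x)$ needed for ACQ, but in fact produces the closed chain of inclusions
\[
    \LL_\Feas(\bar x)
    \subset
    \TT^{\textup{C}}_\Feas(\bar x)
    \subset
    \TT_\Feas(\bar x)
    \subset
    \TT^w_\Feas(\bar x)
    \subset
    \LL_\Feas(\bar x).
\]
Since the leftmost and rightmost sets in this chain coincide, every inclusion must in fact be an equality, and all four cones are identical. Thus the entire content of the corollary is already contained in the theorem's argument, and I would simply record this as its immediate consequence.

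For completeness I would recall the three ingredients that assemble this chain. First, the preliminary step of the theorem's proof yields $\NN^\textup{L}_\Feas(\bar x)\subset\M(\bar x,0)$; this is where sw-AKKT regularity enters, via the diagonal-sequence construction built on \Cref{Prop:OptAKKT2} and \Cref{lem:some_function}. Second, \Cref{rem:closedness_via_CCP} together with the reflexivity of $X$ gives the weak* closedness of $\M(\bar x,0)$, which activates the polar relation \eqref{eq:Farkas}, i.e.\ $\M(\bar x,0)=\LL_\Feas(\bar x)^\circ$. Taking polars of $\NN^\textup{L}_\Feas(\bar x)\subset\LL_\Feas(\bar x)^\circ$ and invoking \cite[Thm.\ 3.57]{Mordukhovich2006} in the form $\NN^\textup{L}_\Feas(\bar x)^\circ=\TT^{\textup{C}}_\Feas(\bar x)$ produces the nontrivial inclusion $\LL_\Feas(\bar x)\subset\TT^{\textup{C}}_\Feas(\bar x)$. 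Third, the remaining inclusions $\TT^{\textup{C}}_\Feas(\bar x)\subset\TT_\Feas(\bar x)\subset\TT^w_\Feas(\bar x)$ hold by definition of the respective tangent cones, while $\TT^w_\Feas(\bar x)\subset\LL_\Feas(\bar x)$ is the general relation noted after \eqref{eq:linearization_cone}.

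There is essentially no further obstacle to overcome here: the genuinely hard work --- deriving $\LL_\Feas(\bar x)\subset\TT^{\textup{C}}_\Feas(\bar x)$ from sw-AKKT regularity --- is carried out inside the theorem. The only point I would make explicit is the elementary logical step that a chain of inclusions whose two endpoints agree forces equality throughout. No hypotheses beyond those of \Cref{thm:ccp_implies_abadie}, namely reflexivity and separability of $X$, sw-AKKT regularity of $\bar x$, and the continuity property \eqref{eq:continuity_property_distance_constraint_map}, are required.
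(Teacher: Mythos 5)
Your proposal is correct and takes essentially the same route as the paper, which likewise derives the corollary directly from the closed chain of inclusions $\LL_\Feas(\bar x)\subset\TT^{\textup{C}}_\Feas(\bar x)\subset\TT_\Feas(\bar x)\subset\TT^w_\Feas(\bar x)\subset\LL_\Feas(\bar x)$ established at the end of the proof of \Cref{thm:ccp_implies_abadie}, with equality throughout forced by the coinciding endpoints. The ingredients you recall (the inclusion $\NN^\textup{L}_\Feas(\bar x)\subset\M(\bar x,0)$ from sw-AKKT regularity, the weak* closedness of $\M(\bar x,0)$ activating \eqref{eq:Farkas}, and the relation $\NN^\textup{L}_\Feas(\bar x)\polar=\TT^{\textup{C}}_\Feas(\bar x)$) are exactly those used there.
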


\subsection{Guignard Constraint Qualification}\label{Sub:Guignard}

Let us first recall the definition of Guignard's constraint qualification which
can be traced back to \cite{Guignard1969}.

\begin{definition}\label{def:GCQ}
Let $\bar x\in\Feas$ be a feasible point of \eqref{Eq:Opt}.
We say that \emph{Guignard's constraint qualification} (GCQ) is valid at $\bar x$ if
\begin{equation*}
	\TT_\Feas^w(\bar x)\polar
	=
	\MM(\bar x, 0).
\end{equation*}
holds.
\end{definition}

Recall that our definition of GCQ is not necessarily equivalent to the requirement 
$ \TT_\Feas^w(\bar x)\polar = \LL_\Feas(\bar x)\polar $. In fact, this is true only
if $ \MM(\bar x, 0) $ is weak* closed, cf.\ \eqref{eq:Farkas}. By polarity, 
\Cref{def:GCQ} immediately implies that $ \MM(\bar x, 0) $ is weak* closed, whereas 
this does not follow from the alternative definition that 
$ \TT_\Feas^w(\bar x)\polar = \LL_\Feas(\bar x)\polar $.

Inspecting the proof of \Cref{lem:ACQ_is_a_CQ}, it is clear that GCQ is indeed a
constraint qualification.
Furthermore, under the assumption that $X$ is reflexive,
GCQ is the weakest constraint qualification
which ensures
that $\bar x$ is a KKT point for all functions $\hat f$
which are differentiable at $\bar x$
and for which
$\bar x$ is a local minimizer of $\hat f$ over $\Feas$,
see \cite[Cor.\ 3.4]{GouldTolle1975}.
We show that 
s-AKKT regularity 
implies GCQ if $X$ is additionally separable.

\begin{theorem}
	\label{lem:sCCP_implies_GCQ}
	Let $X$ be reflexive and separable. Let us assume that 
	$\bar x\in\Feas$ is a feasible s-AKKT regular point of \eqref{Eq:Opt}. 
	Furthermore, assume that condition \eqref{eq:continuity_property_distance_constraint_map} holds.
	Then GCQ is valid at $\bar x$.
\end{theorem}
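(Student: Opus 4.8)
The plan is to establish Guignard's constraint qualification in the form $\TT_\Feas^w(\bar x)\polar = \MM(\bar x,0)$ by proving the two opposing inclusions, throughout exploiting the identity $\TT_\Feas^w(\bar x)\polar = \widehat{\NN}_\Feas(\bar x)$ recorded in \Cref{Sec:Prelims}. Thus it is enough to show $\widehat{\NN}_\Feas(\bar x) = \MM(\bar x,0)$, and the argument will turn out to be a streamlined version of the proof of \Cref{thm:ccp_implies_abadie}: since we aim only at GCQ, we may work directly with the Fréchet normal cone and avoid the detour through the limiting normal cone and the Clarke tangent cone that was needed there.

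For the core inclusion $\widehat{\NN}_\Feas(\bar x) \subset \MM(\bar x,0)$, I would fix an arbitrary $v \in \widehat{\NN}_\Feas(\bar x)$ and invoke \Cref{lem:some_function} (applicable since $X$ is reflexive and separable) to produce a continuously Fréchet differentiable, weakly sequentially lower semicontinuous function $h\colon X\to\R$ with $h'(\bar x) = -v$ for which $\bar x$ is the unique global minimizer of $h$ over $\Feas$. Replacing the objective in \eqref{Eq:Opt} by $h$ and applying \Cref{Prop:OptAKKT2}---whose hypotheses are met by reflexivity of $X$, weak sequential lower semicontinuity of $h$, and the standing assumption \eqref{eq:continuity_property_distance_constraint_map}---shows that $\bar x$ is an s-AKKT point of the modified problem. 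By \Cref{lem:referee}~(a), this is equivalent to $v = -h'(\bar x) \in \limsup_{x\to\bar x,\,r\searrow 0}\MM(x,r)$. Since $\bar x$ is s-AKKT regular, this outer limit lies in $\MM(\bar x,0)$, whence $v \in \MM(\bar x,0)$; as $v$ was arbitrary, the inclusion follows.

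For the reverse inclusion $\MM(\bar x,0) \subset \widehat{\NN}_\Feas(\bar x)$, I would first observe that s-AKKT regularity together with reflexivity of $X$ forces $\MM(\bar x,0)$ to be weak* closed, by \Cref{rem:closedness_via_CCP}. This activates the polarity relation \eqref{eq:Farkas}, yielding $\MM(\bar x,0) = \LL_\Feas(\bar x)\polar$. Combining this with the always-valid inclusion $\TT_\Feas^w(\bar x) \subset \LL_\Feas(\bar x)$ and passing to polars gives $\MM(\bar x,0) = \LL_\Feas(\bar x)\polar \subset \TT_\Feas^w(\bar x)\polar = \widehat{\NN}_\Feas(\bar x)$, as required. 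Putting both inclusions together establishes $\TT_\Feas^w(\bar x)\polar = \MM(\bar x,0)$, i.e.\ GCQ.

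I expect the main obstacle to be the careful transfer of the s-AKKT property to the auxiliary problem in the first inclusion: one must verify that the function $h$ supplied by \Cref{lem:some_function} genuinely satisfies every hypothesis of \Cref{Prop:OptAKKT2}---in particular that its weak sequential lower semicontinuity and the global minimality of $\bar x$ are simultaneously available---so that \Cref{lem:referee}~(a) may be legitimately applied with the new objective. Once this is secured, the remainder reduces to elementary manipulations with polar cones and to the closedness furnished by s-AKKT regularity, which are routine.
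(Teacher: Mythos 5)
Your proposal is correct and takes essentially the same route as the paper's proof: your first inclusion, obtained from \Cref{lem:some_function} combined with \Cref{Prop:OptAKKT2}, \Cref{lem:referee}~(a), and s-AKKT regularity, is precisely what the paper achieves by citing \Cref{Cor:AKKT-CQ_is_CQ} (which bundles exactly these ingredients), and your reverse inclusion via \Cref{rem:closedness_via_CCP}, the polarity relation \eqref{eq:Farkas}, and $\TT_\Feas^w(\bar x)\subset\LL_\Feas(\bar x)$ matches the paper's argument verbatim. The hypothesis check you flag as the main obstacle indeed goes through, since \Cref{lem:some_function} supplies the weak sequential lower semicontinuity and the global minimality of $\bar x$ over $\Feas$ simultaneously.
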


\begin{proof}
	From $\TT_\Feas^w(\bar x) \subset \LL_\Feas(\bar x)$
	we obtain $\TT_\Feas^w(\bar x)\polar \supset \LL_\Feas(\bar x)\polar = \MM(\bar x, 0)$
	from taking polars
	since $\MM(\bar x, 0)$ is closed 
	by the postulated s-AKKT regularity, 
	cf.\ \Cref{rem:closedness_via_CCP}.
	For $v \in \TT_\Feas^w(\bar x)\polar = \widehat\NN_\Feas(\bar x)$,
	we can argue similarly to the proof of \Cref{thm:ccp_implies_abadie}:
	\Cref{lem:some_function} yields a
	continuously differentiable function $ h: X \to \R $ such that $ h $
	restricted to $\Feas$
	achieves a unique
	global minimum at $ \bar x $ and $h'(\bar x) = -v $.
	Now, we can apply \Cref{Cor:AKKT-CQ_is_CQ} to the objective $h$
	and obtain $v = -h'(\bar x) \in \MM(\bar x, 0)$.
	This finishes the proof.
\end{proof}

In \Cref{fig:CQs}, we summarize the relations between all 
types of AKKT regularity 
from \Cref{Def:AKKTCQ} as well
as RCQ, ACQ, and GCQ in the context of a reflexive Banach space $X$. 
In light of \Cref{Prop:OptAKKT2} and \Cref{Ex:necessity_of_reflexivity}, the reflexivity assumption 
on $X$ is indispensable whenever sequential constraint qualifications are under consideration.

\begin{figure}[h]
\centering
\begin{tikzpicture}[->]

  \node[punkt] at (0,0) 	(A){RCQ};
  \node[punkt] at (-3.5,-2) 	(B){w-AKKT reg.};
  \node[punkt] at (0,-2) 	(C){sw-AKKT reg.};
  \node[punkt] at (3.5,-2)    (D){s-AKKT reg.};
  \node[punkt] at (0,-4) 	(E){ACQ};
  \node[punkt] at (3.5,-4)  	(F){GCQ};

  \path     (A) edge[-implies,thick,double] node[above left] {(a)}(B)
            (A) edge[-implies,thick,double] node {}(C)
            (B) edge[-implies,thick,double] node {}(C)
            (C) edge[-implies,thick,double] node {}(D)
            (C) edge[-implies,thick,double] node[left] {(b)}(E)
            (D) edge[-implies,thick,double] node[left] {(b)}(F)
            (E) edge[-implies,thick,double] node {}(F);
\end{tikzpicture}
\caption{
	Relations between constraint qualifications in the setting where $X$ is reflexive.
	Relations with labeled arrows only hold under additional assumptions:\\
	(a) requires complete continuity of $G$ and $G'$ as well as $C=X$,
	(b) holds whenever $X$ is separable and \eqref{eq:continuity_property_distance_constraint_map}
	holds.
}
\label{fig:CQs}
\end{figure}
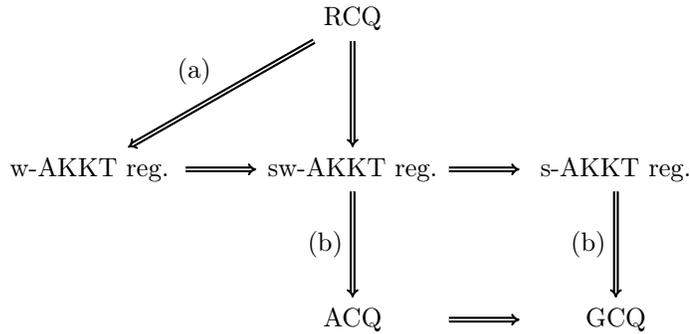

We close this section by pointing out one open problem, namely whether
s-AKKT regularity 
already implies ACQ.
The previous technique of proof does not yield this implication, on the other hand,
we were also not able to find a counterexample.

\section{AKKT Regularity in Exemplary Settings}\label{sec:CCP_in_practice}

In this section, we present three practically relevant settings where w-AKKT regularity or at least sw-AKKT regularity 
is inherent or can be checked via evaluation of reasonable conditions.
First, we prove in \Cref{Sub:Affine} that 
w-AKKT regularity 
is automatically satisfied 
in the setting where the feasible set $\Feas$ is defined 
via affine equality constraints which are induced by a bounded linear operator with
a closed range. \Cref{Sub:Nonlinear} discusses the 
situation where we have nonlinear equality constraints which model $\Feas$.
Finally, in \Cref{Sub:Box}, we will investigate the important 
setting of two-sided (pointwise) box constraints in Lebesgue spaces.
It will be shown that 
sw-AKKT regularity 
holds in this situation as well (whereas RCQ is known to be violated 
for this class of problems).

\subsection{Affine Equality Constraints}\label{Sub:Affine}

Let $X$ and $Y$ be Banach spaces such that $X$ is reflexive.
Furthermore, fix an operator $A\in\mathbb L(X,Y)$ with closed range and some vector $b\in Y$. 
We consider the affine equality constraint
\[
	Ax=b.
\]
In this situation, it holds $\ConY:=\{0\}$, $C:=X$, and $G(x):=Ax-b$ for all $x\in X$.
Clearly, RCQ holds for this constraint system if and only if $A$ is surjective, i.e.,
if $\range A=\SpaceY$.

By $K=\{0\}$, it clearly holds
\[
	\forall x\in X\,~\forall r\in [0,+\infty) \colon\quad
	\M(x,r)
	=
	\{
		A^*\lambda
		\,|\,
		\lambda\in Y^*,\,-\<\lambda,Ax-b\>_Y\leq r
	\},
\]
i.e., for each point $\bar x\in X$ satisfying $A\bar x=b$, we obtain
\[
	\forall r\in [0,+\infty) \colon\quad
	\M(\bar x,r)
	= A^*Y^*
\]
Particularly, $\M(x,r) \subset \M(\bar x,0)$ is obtained for arbitrary $x\in X$ and $r\in [0,+\infty) $.
Obviously, $\M(\bar x,0)=A^*\SpaceY^*$ is convex.
Furthermore,  $\M(\bar x,0)$ is closed by closedness of $AX$ and the closed range theorem.
Since $X$ is reflexive, the same holds true for $X^*$ and, consequently, $\M(\bar x,0)$ is
a weakly* closed subset of $X^*$.
Particularly, we obtain 
w-AKKT regularity of each point  
which satisfies the affine constraint.

In view of \Cref{Prop:OptAKKT2} and \Cref{sAKKTpIsKKT}, the above observation recovers the classical
result \cite[Thm.\ 4.2]{Kurcyusz1976}.

\subsection{Nonlinear Equality Constraints}\label{Sub:Nonlinear}

We consider the special constraint system
\begin{equation}\label{eq:nonlinear_inequality}
	G(x)=0,
\end{equation}
i.e., we fix $C: = X$ and $K := \{0\}$.
In contrast to \Cref{Sub:Affine}, this equality constraint
is allowed to be nonlinear.
Clearly, RCQ holds at a feasible point $\bar x\in X$ of the constraint system
\eqref{eq:nonlinear_inequality} if and only if $G'(\bar x)$ is surjective.

The next result utilizes the
\emph{reduced minimum modulus}
introduced in \cite[Sec.\ IV. § 5]{Kato1995}.
For a bounded linear operator $T \in\mathbb L(X,Y)$,
it is defined via
\begin{equation*}
	\gamma(T)
	:=
	\inf\set[\big]{
		\norm{T \, x}_Y
		\given
		x \in X
		,\;
		\dist(x, \ker T)
		=
		1
	}
	.
\end{equation*}
It is well known that
the range of $T$ is closed
if and only if
$\gamma(T) > 0$,
see \cite[Thm.\ IV.5.2]{Kato1995}.
Hence,
$\gamma(T)$
can be used as a
``quantitative measure of closedness''
of the range of $T$.
Moreover,
$\gamma(T) = \gamma(T\adjoint)$
holds
and this is a quantitative version of the closed range theorem,
see \cite[Thm.\ IV.5.13]{Kato1995}.
Finally,
we have the inequality
\begin{equation}
	\label{eq:pseudo-inverse-inequality}
	\forall x\in X\colon\quad
	\dist(x, \ker T)
	=
	\inf_{v \in \ker T} \norm{x - v}_X
	\le
	\frac1{\gamma(T)} \,
	\norm{T \, x}_Y
\end{equation}
in case $\gamma(T) > 0$.
In finite dimensions,
$\gamma(T)$ coincides with the
reciprocal of the norm of the Moore--Penrose inverse of the matrix $T$.

\begin{proposition}
	\label{prop:ccp_constant_rank}
	Let $\bar x\in \Feas$ be a feasible point of the constraint system \eqref{eq:nonlinear_inequality}.
	Furthermore, suppose that there are a neighborhood $U\subset X$ of $\bar x$ and some $\beta>0$
	such that 
	\begin{equation}\label{eq:CRCQ}
		\forall x\in U\colon\quad
		\gamma( G'(x) ) \ge \beta
	\end{equation}
	is valid.
	Then $ \bar x $ is sw-AKKT regular. 
\end{proposition}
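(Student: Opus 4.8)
The plan is to exploit the explicit form of $\M(\bar x,0)$ for an equality constraint and to show directly that the relevant weak* outer limit is contained in the range of $G'(\bar x)\adjoint$. Since $C=X$ we have $\NN_C(x)=\{0\}$ for every $x$, so every element of $\M(x,r)$ is of the form $G'(x)\adjoint\lambda$, and the defining condition reduces to $-\dual{\lambda}{G(x)}_Y\le r$. At the feasible point itself, $G(\bar x)=0$ renders this inequality vacuous, so by \eqref{Eq:SeqMbarx0} we get $\M(\bar x,0)=\{G'(\bar x)\adjoint\lambda\mid\lambda\in Y^*\}=\range G'(\bar x)\adjoint$ (which is closed, since $\gamma(G'(\bar x))\ge\beta>0$). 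Hence sw-AKKT regularity amounts to proving $\wStarlimsup_{x\to\bar x,\,r\searrow 0}\M(x,r)\subset\range G'(\bar x)\adjoint$.

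I would establish this inclusion directly rather than through \Cref{lem:akkt_bounded}~(a), for the reason explained below. So fix $\bar v$ in the outer limit and select $\{x^k\}$, $\{r^k\}$, $\{v^k\}$ with $x^k\to\bar x$, $r^k\searrow 0$, $v^k\wtos\bar v$, and $v^k\in\M(x^k,r^k)$; as noted, $v^k=G'(x^k)\adjoint\lambda^k$ for some (possibly unbounded) $\lambda^k\in Y^*$. The crucial step is to replace $\lambda^k$ by a \emph{bounded} preimage. For $k$ large enough we have $x^k\in U$, so $\gamma(G'(x^k))\ge\beta$, and the quantitative closed range theorem \cite[Thm.\ IV.5.13]{Kato1995} gives $\gamma(G'(x^k)\adjoint)=\gamma(G'(x^k))\ge\beta$. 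Applying \eqref{eq:pseudo-inverse-inequality} to $T=G'(x^k)\adjoint$ yields $\dist(\lambda^k,\ker G'(x^k)\adjoint)\le\beta^{-1}\norm{v^k}_{X^*}$, so there is $w^k\in\ker G'(x^k)\adjoint$ such that $\hat\lambda^k:=\lambda^k-w^k$ satisfies $G'(x^k)\adjoint\hat\lambda^k=v^k$ and $\norm{\hat\lambda^k}_{Y^*}\le\beta^{-1}\norm{v^k}_{X^*}+1/k$. Since $\{v^k\}$ is weak* convergent, hence norm bounded, the sequence $\{\hat\lambda^k\}$ is bounded in $Y^*$.

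It then remains to pass to the limit. By the Banach--Alaoglu--Bourbaki theorem, $\{\hat\lambda^k\}$ admits a weak* convergent subnet $\hat\lambda^{k(i)}\wtos\bar\lambda$. For every $z\in X$ I would split $\dual{\hat\lambda^{k(i)}}{G'(x^{k(i)})\,z}_Y=\dual{\hat\lambda^{k(i)}}{G'(\bar x)\,z}_Y+\dual{\hat\lambda^{k(i)}}{(G'(x^{k(i)})-G'(\bar x))\,z}_Y$; the first summand converges to $\dual{\bar\lambda}{G'(\bar x)\,z}_Y$ by weak* convergence against the fixed vector $G'(\bar x)z$, while the second is bounded by $\norm{\hat\lambda^{k(i)}}_{Y^*}\,\norm{G'(x^{k(i)})-G'(\bar x)}\,\norm{z}_X\to 0$ using the uniform bound on $\{\hat\lambda^k\}$ together with the continuity of $G'$ and $x^{k(i)}\to\bar x$. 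Since also $v^{k(i)}=G'(x^{k(i)})\adjoint\hat\lambda^{k(i)}\wtos\bar v$, comparing the two limits gives $\dual{\bar v}{z}_X=\dual{G'(\bar x)\adjoint\bar\lambda}{z}_X$ for all $z\in X$, whence $\bar v=G'(\bar x)\adjoint\bar\lambda\in\M(\bar x,0)$.

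The main obstacle, and the reason for arguing directly, is the inequality $-\dual{\lambda}{G(x^k)}_Y\le r^k$ built into $\M(x^k,r^k)$: the bounded preimage $\hat\lambda^k$ constructed above need not satisfy it, since when $G(x^k)\notin\range G'(x^k)$ one may be forced to add a large kernel element to restore the inequality. Consequently \Cref{lem:akkt_bounded}~(a), which demands that the bounded multipliers preserve this inequality, does not apply verbatim. The resolution is that membership of the limit $\bar v$ in $\M(\bar x,0)$ only requires $\bar v\in\range G'(\bar x)\adjoint$, for which the inequality plays no role in the limit; hence discarding it is harmless. The genuinely essential ingredient is the reduced-minimum-modulus bound \eqref{eq:CRCQ}, which is precisely what furnishes the uniform bound on $\{\hat\lambda^k\}$ and thereby the existence of the weak* accumulation point $\bar\lambda$.
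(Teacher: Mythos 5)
Your proof is correct and takes essentially the same route as the paper's own argument: the paper likewise replaces $\lambda^k$ by a bounded preimage $\hat\lambda^k$ with $\lambda^k-\hat\lambda^k\in\ker G'(x^k)\adjoint$ using \eqref{eq:pseudo-inverse-inequality} together with $\gamma(T)=\gamma(T\adjoint)$, extracts a weak* convergent subnet via Banach--Alaoglu, and concludes $\bar v=G'(\bar x)\adjoint\hat\lambda\in\M(\bar x,0)$ from the continuity of $G'$, the uniform bound, and uniqueness of weak* limits. Your side remark is also consistent with the paper: it only borrows the limit-passage argument from \Cref{lem:akkt_bounded} rather than applying part~(a) verbatim, exactly because the shifted multipliers need not satisfy the inequality $-\dual{\hat\lambda^k}{G(x^k)}_Y\le r^k$, which is harmless since $\Nor_K(G(\bar x))=Y^*$ here.
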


\begin{proof}
	For the proof, we are going to exploit similar arguments as used in
	the validation of \Cref{lem:akkt_bounded}.
	Therefore, we fix sequences $\{x^k\}\subset X$, 
	$\{r^k\}\subset [0,+\infty) $, and
	$\{v^k\}\subset X^*$ with $x^k\to\bar x$, $r^k\searrow 0$, $v^k\wtos\bar v$,
	and $v^k\in\M(x^k,r^k)$ for all $k\in\N$. 
	Due to $K=\{0\}$, we find a sequence $\{\lambda^k\}\subset\SpaceY^*$
	such that $v^k=G'(x^k)^*\lambda^k$ and $-\dual{\lambda^k}{G(x^k)}_{\SpaceY}\leq r^k$
	hold for all $k\in\N$.
	By $v^k\wtos\bar v$, $\{G'(x^k)^*\lambda^k\}$ is bounded.
	Due to the assumptions of the proposition and \eqref{eq:pseudo-inverse-inequality},
	we find some constant $c>0$ such that
	\begin{equation*}
		\inf_{\mu^k \in \ker G'(x^k)\adjoint} \norm{ \lambda^k - \mu^k }_{Y\dualspace}
		\le
		\frac1\beta \, \norm{ G'(x^k)\adjoint \lambda^k }_{X\dualspace}
		\le
		c
	\end{equation*}
	for large enough $k\in\N$ since $x^k\to\bar x$ holds.
	Thus, we find a bounded sequence $\{\hat\lambda^k\} \subset Y\dualspace$
	with $\lambda^k - \hat\lambda^k \in \ker G'(x^k)\adjoint$,
	i.e., $v^k=G'(x^k)^*\hat\lambda^k$ for all $k\in\N$.
	Due to the boundedness,
	we obtain a subnet with
	$\hat\lambda^{k(i)}\wtos\hat\lambda$ for some $\hat\lambda\in\SpaceY^*$.
	Exploiting the continuity of $G'$ and the boundedness of $\{\hat\lambda^{k(i)}\}$,
	$G'(x^{k(i)})^*\hat\lambda^{k(i)}\wtos G'(\bar x)^*\hat\lambda$ holds,
		see the proof of \cref{lem:akkt_bounded}~(a).
	The uniqueness of weak* limits yields $G'(\bar x)^*\hat\lambda=\bar v$,
	and this shows $\bar v\in\M(\bar x,0)$.
	Hence, we have shown 
	$\wStarlimsup_{\substack{x \to \bar x \\ r \to 0}}\M(x,r)\subset\M( \bar x , 0 )$,
	i.e., $ \bar x $ is sw-AKKT regular. 
\end{proof}

Unfortunately,
the mapping $T \mapsto \gamma(T)$ is
in general
only upper semicontinuous,
e.g.,
for
\begin{equation*}
	T_\varepsilon
	:=
	\begin{pmatrix}
		1 & 0 \\ 0 & \varepsilon
	\end{pmatrix}
	\in \R^{2 \times 2}
\end{equation*}
we have
$\gamma(T_0) = 1$
but
$\gamma(T_\varepsilon) = \varepsilon^{-1}$ for $\varepsilon > 0$.
In finite dimensions,
$\gamma$ is continuous
on the set of matrices with constant rank,
and this justifies that \eqref{eq:CRCQ} can be 
interpreted as some kind of 
\emph{constant rank constraint qualification}.
In this regard, let us mention that
\Cref{prop:ccp_constant_rank}
generalizes the results of
\cite{Blot2018} since we do not need that
the kernel and range of $G'(\bar x)$
are complemented.

In infinite dimensions,
conditions ensuring continuity of $\gamma$
can be found in
\cite[Sec.\ 3]{ZhuCaiChen2003}
and
\cite[Sec.\ 6.1]{Xue2012}.
To formulate these conditions,
we introduce the gap between subspaces $U, V \subset X$
via
\begin{equation*}
	\delta( U, V )
	:=
	\sup \set[\big]{\dist(x, V) \given x \in U, \norm{x}_X = 1 },
\end{equation*}
with the convention $\delta(\{0\}, V ) = 0$.
We have
\begin{equation*}
	\gamma( T^k )
	\ge
	\gamma( T )
	\frac{1 - \delta(\ker T, \ker T^k)}{1 + \delta(\ker T, \ker T^ n)}
	-
	\norm{T - T^k}
\end{equation*}
for operators $T,T^k\in\mathbb L(X,Y)$.
If the range of $T^k$ is closed, we also have
\begin{equation*}
	\gamma( T^k )
	\ge
	\gamma( T )
	\frac{1 - \delta(\range T^k, \range T)}{1 + \delta(\range T^k, \range T)}
	-
	\norm{T - T^k}
	.
\end{equation*}
These two estimates can be found in
\cite[Prop.\ 6.1.5]{Xue2012}.
The first estimate is in particular applicable if $\ker T = \ker T^k$
since this implies $\delta( \ker T, \ker T^k ) = 0$.
Further, we have the trivial implications
\[
	\begin{aligned}
	&\ker T = \{0\}& &\Longrightarrow& &\delta( \ker T, \ker T^k ) = 0,& \\
	&\range T = Y&  &\Longrightarrow& &\delta( \range T^k, \range T ) = 0,&
	\end{aligned}
\]
and under any of the left-hand side requirements,
we have
$\gamma(T^k) \ge \gamma(T) - \norm{T - T^k}$.
Moreover,
it holds
\begin{equation*}
	\abs{ \gamma(T) - \gamma(T^k) } \le \norm{T - T^k}
\end{equation*}
if $T$ has closed range and if any of the following assumptions is satisfied
\begin{enumerate}
	\item
		$\dim\ker T = \dim\ker T^k < \infty$,
	\item
		$\dim\range T = \dim\range T^k < \infty$,
	\item
		$\range T^k$ is closed
		and
		$\dim\coker T = \dim\coker T^k < \infty$,
\end{enumerate}
see
\cite[Prop.\ 6.1.6]{Xue2012}.

\subsection{Box Constraints in Lebesgue Spaces}\label{Sub:Box}

Let $\Omega\subset\R^d$ be a bounded open set.
For functions $u_a,u_b\in L^2(\Omega)$ such that
$u_a\leq u_b$ holds almost everywhere on $\Omega$, 
we consider the box constraints
\[
	u_a\leq u\leq u_b\quad\text{a.e.\ on }\Omega.
\]
Here $u$ is a function from $L^2(\Omega)$.
Using $X:=L^2(\Omega)$, $Y:=L^2(\Omega)^2$, 
\[
	\ConY:=\{(y_1,y_2)\in L^2(\Omega)^2\,|\,y_1,y_2\geq 0\text{ a.e.\ on }\Omega\},
\]
and
$G(u):=(u_b-u,u-u_a)$ for each $u\in L^2(\Omega)$, the above box constraints can be described
equivalently via $G(u)\in\ConY$. 
For completeness, let us set $C:=L^2(\Omega)$.
It is easy to see that RCQ does not hold in this setting, see \cite[Section~6.1.2]{Troeltzsch2010}.
However, we will show below that all  associated feasible points
are sw-AKKT regular. 
To this end, we will make use of the strategy proposed in \Cref{lem:akkt_bounded}.

The associated set-valued mapping $\M\colon L^2(\Omega)\times[0,+\infty)\rightrightarrows L^2(\Omega)$
is given by
\[
	\M(u,r)
	=
	\left\{
		\lambda_a-\lambda_b
		\,\middle|\,
		\begin{aligned}
			&\lambda_a,\lambda_b\in L^2(\Omega),\;\lambda_a,\lambda_b\leq 0\text{ a.e.\ on }\Omega,\\
			&-\<\lambda_b,u_b-u\>_{L^2(\Omega)}-\<\lambda_a,u-u_a\>_{L^2(\Omega)}\leq r		
		\end{aligned}
	\right\}
\]
for all $u \in L^2(\Omega)$ and 
$r \in [0,+\infty) $,
see \Cref{rem:K_cone}. 
Fix some point $\bar u\in L^2(\Omega)$ satisfying $G(\bar u)\in K$.
Furthermore, let $\{u^k\}\subset L^2(\Omega)$, 
$\{r^k\}\subset [0,+\infty) $,
and $\{\lambda^k\}\subset L^2(\Omega)$ be sequences such that
$\lambda^k\in\M(u^k,r^k)$ for each $k\in\N$ as well as $u^k\to\bar u$ in $L^2(\Omega)$,
$r^k\searrow 0$
in $\R$, and $\lambda^k\wto\bar \lambda$ in $L^2(\Omega)$ for some $\bar \lambda\in L^2(\Omega)$.
By definition of $\M$, we find sequences 
$\{\lambda^k_a\},\{\lambda^k_b\}\subset L^2(\Omega)$ satisfying
$\lambda^k=\lambda^k_a-\lambda^k_b$, $\lambda^k_a,\lambda^k_b\leq 0$ almost everywhere on $\Omega$,
and
\begin{equation}\label{eq:box_constraints_bound}
	-\<\lambda^k_b,u_b-u^k\>_{L^2(\Omega)}-\<\lambda^k_a,u^k-u_a\>_{L^2(\Omega)}\leq r^k
\end{equation}
for all $k\in\N$. 
Next, let us set $\tilde\lambda^k_a:=\min(\lambda^k,0)$ and $\tilde\lambda^k_b:=-\max(\lambda^k,0)$
where $\min$ and $\max$ have to be interpreted in a pointwise sense.
Clearly, it holds $\lambda^k=\tilde\lambda^k_a-\tilde\lambda^k_b$ for all $k\in\N$.
By construction, we additionally have $0\geq\tilde\lambda^k_a\geq\lambda^k_a$ as well as
$0\geq\tilde\lambda^k_b\geq\lambda^k_b$ for all $k\in\N$. 
Some rearrangements in \eqref{eq:box_constraints_bound} as well as $u_a-u_b\leq 0$ almost everywhere
on $\Omega$ yield
\begin{equation}\label{eq:box_constraints_bound_norm_minimal_multipliers}
	\begin{aligned}
	r^k
	&\geq
	\<\lambda^k_b-\lambda^k_a,u^k\>_{L^2(\Omega)}-\<\lambda^k_b,u_b\>_{L^2(\Omega)}
		+\<\lambda^k_a,u_a\>_{L^2(\Omega)}\\
	&=
	\<\lambda^k_b-\lambda^k_a,u^k\>_{L^2(\Omega)}+\<\lambda^k_a-\lambda^k_b,u_b\>_{L^2(\Omega)}
		+\<\lambda^k_a,u_a-u_b\>_{L^2(\Omega)}\\
	&\geq
	\<\tilde\lambda^k_b-\tilde\lambda^k_a,u^k\>_{L^2(\Omega)}
		+\<\tilde\lambda^k_a-\tilde\lambda^k_b,u_b\>_{L^2(\Omega)}
		+\<\tilde\lambda^k_a,u_a-u_b\>_{L^2(\Omega)}\\
	&=
	\<\tilde\lambda^k_b-\tilde\lambda^k_a,u^k\>_{L^2(\Omega)}-\<\tilde\lambda^k_b,u_b\>_{L^2(\Omega)}
		+\<\tilde\lambda^k_a,u_a\>_{L^2(\Omega)}\\
	&=
	-\<\tilde\lambda^k_b,u_b-u^k\>_{L^2(\Omega)}-\<\tilde\lambda^k_a,u^k-u_a\>_{L^2(\Omega)}
	\end{aligned}
\end{equation}
for each $k\in\N$. 
As a consequence, we can exploit the sequences $\{\tilde\lambda^k_a\}$ and
$\{\tilde\lambda^k_b\}$ in order to represent $\lambda^k\in\M(u^k,r^k)$.
Due to $\lambda^k\wto\bar\lambda$ in $L^2(\Omega)$, the sequence $\{\lambda^k\}$ is bounded
in $L^2(\Omega)$. Thus, the trivial estimates $\norm{\tilde\lambda^k_a}_{L^2(\Omega)}\leq\norm{\lambda^k}_{L^2(\Omega)}$
and $\norm{\tilde\lambda^k_b}_{L^2(\Omega)}\leq\norm{\lambda^k}_{L^2(\Omega)}$ 
show that $\{\tilde\lambda^k_a\}$ and $\{\tilde\lambda^k_b\}$
are bounded in $L^2(\Omega)$, too.
Thus, these sequences possess weakly convergent subsequences.
We assume w.l.o.g.\ that the convergences $\tilde\lambda^k_a\wto\lambda_a$ and $\tilde\lambda^k_b\wto\lambda_b$
hold true. By weak sequential closedness of $\{v\in L^2(\Omega)\,|\,v\leq 0\text{ a.e.\ on }\Omega\}$,
we have $\lambda_a,\lambda_b\leq 0$ almost everywhere on $\Omega$.
Since it holds $\lambda^k\wto\bar\lambda$ and $\lambda^k=\tilde\lambda^k_a-\tilde\lambda^k_b$, uniqueness
of the weak limit yields $\bar\lambda=\lambda_a-\lambda_b$.
Taking the limit $k\to\infty$ in 
\eqref{eq:box_constraints_bound_norm_minimal_multipliers} implies
\[
	0\geq-\<\lambda_b,u_b-\bar u\>_{L^2(\Omega)}-\<\lambda_a,\bar u-u_a\>_{L^2(\Omega)},
\]
where we used the convergences $u^k\to\bar u$, $\tilde\lambda^k\wto\lambda_a$, and $\tilde\lambda^k_b\wto\lambda_b$
in $L^2(\Omega)$.
Due to $\bar\lambda=\lambda_a-\lambda_b$, we have shown $\bar\lambda\in\M(\bar u,0)$.
Particularly, $\bar u$ is sw-AKKT regular.

\section{Application to Safeguarded Augmented Lagrangian Methods}\label{Sec:Apps}

In this section, we want to show that the safeguarded augmented Lagrangian method (ALM), applied 
to \eqref{Eq:Opt}, generates a w-AKKT sequence under appropriate assumptions, 
and deduct consequences for the convergence behavior 
from this observation. The safeguarded augmented Lagrangian methods have become popular for finite- and 
infinite-dimensional optimization problems, see \cite{Andreani2007,Andreani2008,Andreani2019,Birgin2014} 
for the finite-dimensional perspective and \cite{BoergensKanzow2019local,Kanzow2016b} for the 
infinite-dimensional view. Since augmented Lagrangian methods are at their core Hilbert space methods, 
we presume that the constraint space $ Y $ from \eqref{Eq:Opt} is densely embedded in a Hilbert space $ H $ 
such that we have the Gelfand triple structure 
$ Y \overset{d}{\hookrightarrow} H = H^* \overset{d}{\hookrightarrow} Y^* $. Further we require 
that the constraint is well represented in the Hilbert space, i.e., we assume that there is a 
closed convex set $\mathcal{K}\subset H$ such that $ e^{-1} ( \mathcal{K} ) = K $, where $ e $ represents the 
dense embedding $Y \overset{d}{\hookrightarrow} H $. Thus, problem \eqref{Eq:Opt} is equivalent to 
\begin{equation}\tag{$P_{\SpaceH}$}\label{Eq:OptH}
	\begin{aligned}
		&\minimize_{x\in\ConX}&&f(x)&
		&\subjectto&& e(G(x))\in\ConH.&
	\end{aligned}
\end{equation}
For better readability, the embedding $ e $ will be omitted in the sequel.

We now turn to the multiplier-penalty method for the optimization 
problem \eqref{Eq:Opt}. To this end, we define the augmented Lagrangian of \eqref{Eq:OptH} as follows.

\begin{definition}\label{Dfn:AL}
	For $\rho>0$, the \emph{augmented Lagrange function} or \emph{augmented Lagrangian} of \eqref{Eq:OptH} is the function
	$\Lag_{\rho}\colon X\times\SpaceH\to\R$ defined by
	\begin{equation}\label{Eq:AL}
	\forall x\in X\,\forall\lambda\in\SpaceH\colon\quad
	\Lag_{\rho}(x,\lambda):=f(x)+\frac{\rho}{2} d_{\ConH}^2 \mleft( G(x)+\frac{\lambda}{\rho}
	\mright)-\frac{\norm{\lambda}_{\SpaceH}^2}{2\rho}.
	\end{equation}
\end{definition}
Note that there are other variants of $ \Lag_{\rho} $ in the literature. 
However, these differ from \eqref{Eq:AL} only by an additive constant 
(w.r.t.\ $x$).

For the construction of our algorithm, we will need a means of controlling
the penalty parameter. To this end, we define the utility function
\begin{equation*}
	\forall x\in X\,\forall\lambda\in\SpaceH\,\forall\rho>0\colon\quad
	V(x,\lambda,\rho):=\norm*{G(x)-P_{\ConH}\paren*{G(x)+
	\rho^{-1}\lambda}}_{\SpaceH}.
\end{equation*}
This definition enables us to formulate our algorithm as follows.

\begin{myalgorithm}[ALM for constrained optimization]\label{Alg:ALM}
		Let $(x^0,\lambda^0)\in X\times \SpaceH$, $\rho^0>0$, and a nonempty, bounded set 
		$B\subset \SpaceH$ be given.
		Furthermore, fix $\gamma>1$, $\tau\in(0,1)$, and set $k:=0$.
		\begin{enumerate}[topsep=1ex,parsep=0ex,leftmargin=*,label=\textbf{Step~\arabic*.}]
			\item If $(x^k,\lambda^k)$ satisfies a suitable termination criterion: STOP.
			\item Choose $\BddMul^k\in B$ and compute an approximate solution $x^{k+1}$ of the problem
			\begin{equation}\label{Eq:AL:xSubproblem}
			\minimize_{x\in\ConX}\, \Lag_{\rho^k}(x,\BddMul^k).
			\end{equation}
			\item Update the vector of multipliers to
			\begin{equation*}
			\lambda^{k+1}:=\rho^k \mleft[ G(x^{k+1})+\frac{\BddMul^k}{\rho^k}
			-P_{\ConH}\mleft(G(x^{k+1})+\frac{\BddMul^k}{\rho^k}\mright) \mright].
			\end{equation*}
			\item Let $V^{k+1}:=V(x^{k+1},\BddMul^k,\rho^k)$ and set
			\begin{equation*}
			\rho^{k+1}:=
			\begin{cases}
			\rho^k & \text{if }k=0\text{ or }V^{k+1}\le \tau V^k,\\
			\gamma \rho^k & \text{otherwise}.
			\end{cases}
			\end{equation*}
			\item Set $k\leftarrow k+1$ and go to Step~1.
		\end{enumerate}
\end{myalgorithm}

Note that \Cref{Alg:ALM} differs from the classical augmented Lagrangian method by the
introduction of the bounded sequence $ \{ w^k \} $. The classical method is obtained by
replacing $ w^k $ by $ \lambda^k $ everywhere. Hence, both methods coincide whenever one takes
$ w^k = \lambda^k $ as long as $ \lambda^k $ remains bounded, say $ \lambda^k \in B $ for
the user-specified set $ B $ from \Cref{Alg:ALM}. For an unbounded sequence $ \{ \lambda^k \} $,
however, the global convergence properties of the above (safeguarded) augmented Lagrangian
method are stronger than for the classical ALM, cf.\ the example given in \cite{Kanzow2017}.

Let us stress that the sequence $ \{ \lambda^k \} $ generated by \Cref{Alg:ALM} belongs
to the Hilbert space $ H $, but will usually be viewed as a sequence in the (larger)
space $ Y^* $ since boundedness of this sequence is usually easier to verify in this
dual space than in $ H $ itself (note that, despite the boundedness of $ \{ w^k \} $, the 
sequence $ \{ \lambda^k \} $ might still be unbounded). The reader might therefore wonder
why we introduce the Gelfand structure $\SpaceY\hookrightarrow H\hookrightarrow \SpaceY^*$
with a Hilbert space $H$. The reason for that is twofold.
On the one hand, the augmented Lagrangian method is mainly a Hilbert space technique due to the fact that
we compute projections (and $ Y $ itself might not be a Hilbert space). 
On the other hand, the presence of $H$ gives us some more freedom for the design of the actual method.
If $ \SpaceY $ itself is a Hilbert space, it seems very natural, at a first glance, to take
$ H := Y $. However, if $ Y := H_0^1(\Omega) $ would be taken as the Hilbert space $ H $,
we would have to compute projections w.r.t.\ the norm in $H^1_0(\Omega)$, and these projections
are expensive to calculate. In this case, it is a nearby idea to embed the
Sobolev space $Y$ into $ H:=L^2(\Omega) $, where projections are usually much cheaper to compute.

So far, we have not specified what constitutes an ``approximate solution'' 
in Step 2 of \cref{Alg:ALM}. Clearly, there are multiple possibilities when solving the 
subproblem \eqref{Eq:AL:xSubproblem}; for instance, we could look for global minima
or KKT points. Here, we are only interested in the case where the 
subproblems are solved by computing inexact KKT points. To this end, we state
the following (natural) assumption regarding the quality by which the subproblems
are solved in each inner iteration of \Cref{Alg:ALM}.

\begin{assumption}\label{Asm:xSubAccuracy}
We assume that there is a sequence $\{\varepsilon^k\}\subset X^*$ with $\varepsilon^k\wtos 0$
such that
$x^{k+1}\in\ConX$ and $\varepsilon^{k+1}-(\Lag_{\rho^k})'_x(x^{k+1},\BddMul^k)\in \Nor_{\ConX}(x^{k+1})$ 
hold for all $k\in\N$.
\end{assumption}

The next lemma verifies the first part of the w-AKKT sequence property stated in \Cref{Def:AKKTseq}.
A proof of this result can be found in \cite[Lem.\ 5.2]{Kanzow2018}.

\begin{lemma}\label{Lem:firstPartOfAKKT}
Let $\{(x^k,\lambda^k)\}\subset X\times H$ be a sequence generated by \Cref{Alg:ALM}. 
Then there is a null sequence $\{r^k\}\subset [0,+\infty)$ such that $(\lambda^k,y-G(x^k))_{\SpaceH}\le r^k$ 
holds for all $y\in\ConH$ and $k\in\N$. 
\end{lemma}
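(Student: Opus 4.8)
The plan is to read off the multiplier directly from Step~3 of \Cref{Alg:ALM} and then exploit the variational characterization of the metric projection onto the closed convex set $\ConH$. Writing $z^{k+1}:=G(x^{k+1})+\BddMul^k/\rho^k$, Step~3 reads $\lambda^{k+1}=\rho^k\,(z^{k+1}-P_{\ConH}(z^{k+1}))$, so the projection inequality $(z^{k+1}-P_{\ConH}(z^{k+1}),y-P_{\ConH}(z^{k+1}))_{\SpaceH}\le 0$, valid for all $y\in\ConH$, yields $(\lambda^{k+1},y-P_{\ConH}(z^{k+1}))_{\SpaceH}\le 0$. I would then split $(\lambda^{k+1},y-G(x^{k+1}))_{\SpaceH}=(\lambda^{k+1},y-P_{\ConH}(z^{k+1}))_{\SpaceH}+(\lambda^{k+1},P_{\ConH}(z^{k+1})-G(x^{k+1}))_{\SpaceH}$ and insert the identity $P_{\ConH}(z^{k+1})-G(x^{k+1})=(\BddMul^k-\lambda^{k+1})/\rho^k$, which follows immediately from the update. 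The first summand is nonpositive, leaving the key estimate $(\lambda^{k+1},y-G(x^{k+1}))_{\SpaceH}\le\tfrac1{\rho^k}(\lambda^{k+1},\BddMul^k-\lambda^{k+1})_{\SpaceH}$ uniformly in $y\in\ConH$. Accordingly I would set $r^{k+1}$ to be the nonnegative part of this right-hand side (reindexing so that the statement concerns the multipliers actually produced in Step~3), so that everything reduces to proving $r^{k+1}\to 0$.

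Since $\{\BddMul^k\}\subset B$ is bounded, put $C:=\sup_k\norm{\BddMul^k}_{\SpaceH}$. Completing the square gives $(\lambda^{k+1},\BddMul^k-\lambda^{k+1})_{\SpaceH}\le\norm{\lambda^{k+1}}_{\SpaceH}\,C-\norm{\lambda^{k+1}}_{\SpaceH}^2\le C^2/4$, hence $r^{k+1}\le C^2/(4\rho^k)$. If the penalty parameter is increased infinitely often, then $\rho^k\to\infty$ (each increase multiplies by $\gamma>1$), and this crude bound already forces $r^{k+1}\to 0$.

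The complementary case is where $\rho^k$ is increased only finitely often, so that it is eventually constant, say $\rho^k=\bar\rho$ for $k\ge k_0$. Then the update rule in Step~4 forces $V^{k+1}\le\tau V^k$ for all $k\ge k_0$ with $\tau\in(0,1)$, whence $V^{k+1}\to 0$. A short computation from the update identity shows $V^{k+1}=\norm{\lambda^{k+1}-\BddMul^k}_{\SpaceH}/\rho^k$, so that $\norm{\lambda^{k+1}-\BddMul^k}_{\SpaceH}=\rho^k V^{k+1}\to 0$ and consequently $\norm{\lambda^{k+1}}_{\SpaceH}\le C+\norm{\lambda^{k+1}-\BddMul^k}_{\SpaceH}$ remains bounded. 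A single application of Cauchy--Schwarz then yields $r^{k+1}\le\norm{\lambda^{k+1}}_{\SpaceH}\,\norm{\lambda^{k+1}-\BddMul^k}_{\SpaceH}/\rho^k=\norm{\lambda^{k+1}}_{\SpaceH}\,V^{k+1}\to 0$, which closes this case.

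I expect the main obstacle to be exactly this dichotomy on $\{\rho^k\}$: the completing-the-square bound only decays when $\rho^k\to\infty$, so the bounded-penalty regime must be treated separately, and there the decisive structural observation is the identity $V^{k+1}=\norm{\lambda^{k+1}-\BddMul^k}_{\SpaceH}/\rho^k$, which simultaneously supplies the vanishing of $V^{k+1}$ and the boundedness of $\{\lambda^{k+1}\}$ needed to make the Cauchy--Schwarz estimate useful. Finally, should one wish for the monotone null sequence required in \Cref{Def:AKKTseq}, one replaces $r^k$ by $\sup_{j\ge k}r^j$, which is nonincreasing, tends to zero, and still dominates the left-hand side.
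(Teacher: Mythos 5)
Your proposal is correct and takes essentially the same route as the proof the paper relies on (the paper does not reproduce it but cites it from the reference given after the lemma): the variational inequality for $P_{\mathcal{K}}$, the identity $\norm{\lambda^{k+1}-w^k}_{\SpaceH}=\rho^k V^{k+1}$, and the dichotomy between $\rho^k\to\infty$ (where the bound $C^2/(4\rho^k)$ suffices) and an eventually constant penalty parameter (where Step~4 forces $V^{k+1}\le\tau V^k$ and hence $V^k\to 0$) are exactly the ingredients of that argument. Your reindexing to the multipliers actually produced in Step~3 and the monotonization $\sup_{j\ge k}r^j$ correctly dispose of the only bookkeeping subtleties.
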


By the last lemma and $ e(K) \subset \mathcal{K} $, we also have $
\dual{\lambda^k}{y-G(x^k)}_Y \le r^k$ 
for all $y\in K$ and $k\in\N$. This shows that one of the two defining properties in the
definition of w-AKKT sequences is satisfied for problem \eqref{Eq:Opt} (viewed as an
optimization problem with the primal-dual pair $ (x^k, \lambda^k) $ generated in 
$ X \times Y^* $, whereas the optimization problem \eqref{Eq:OptH} would have to view
this sequence as belonging to the space $ X \times H $). The second part from the
definition of a w-AKKT sequence is a requirement in $ X^* $ and therefore identical for
\eqref{Eq:Opt} and \eqref{Eq:OptH}. A verification of this second condition yields the 
following result.

\begin{theorem}\label{Thm:ALgenAKKTseq}
Suppose that the subproblems in \eqref{Eq:AL:xSubproblem} are solved such that \Cref{Asm:xSubAccuracy} 
holds. Then \Cref{Alg:ALM} generates a w-AKKT sequence $\{(x^k,\lambda^k)\}\subset X\times H$
of \eqref{Eq:OptH} (and, particularly, due to $\{(x^k,\lambda^k)\}\subset X\times Y^*$, of \eqref{Eq:Opt},
too).
\end{theorem}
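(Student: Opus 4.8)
The plan is to verify the two defining properties of a w-AKKT sequence from \Cref{Def:AKKTseq} for the primal-dual sequence $\{(x^k,\lambda^k)\}$ produced by \Cref{Alg:ALM}. The approximate complementarity property, i.e., the existence of $\{r^k\}\subset[0,+\infty)$ with $r^k\searrow 0$ and $\dual{\lambda^k}{y-G(x^k)}_Y\le r^k$ for all $y\in K$, is already at hand: it follows from \Cref{Lem:firstPartOfAKKT} together with the inclusion $e(K)\subset\mathcal{K}$, as recorded in the discussion preceding the theorem. Thus the only task left is the stationarity requirement in $X^*$, namely to exhibit $\{\varepsilon^k\}\subset X^*$ with $\varepsilon^k\wtos 0$ and $\varepsilon^k-\Lag'_x(x^k,\lambda^k)\in\Nor_C(x^k)$ for all $k$.

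First I would differentiate the augmented Lagrangian. Since $\mathcal{K}\subset H$ is closed and convex, the map $z\mapsto\tfrac12 d_{\mathcal{K}}^2(z)$ is continuously Fréchet differentiable on the Hilbert space $H$ with gradient $z-P_{\mathcal{K}}(z)$. Applying the chain rule to \eqref{Eq:AL} (reading $G$ as a mapping into $H$ through the embedding, which also discards the $x$-independent term $-\norm{\lambda}_H^2/(2\rho)$) yields
\[
	(\Lag_{\rho})'_x(x,\lambda)
	=
	f'(x)+\rho\,G'(x)^*\bracks[\big]{G(x)+\tfrac{\lambda}{\rho}-P_{\mathcal{K}}\paren[\big]{G(x)+\tfrac{\lambda}{\rho}}}.
\]
The key observation is that the bracketed term, after multiplication by $\rho^k$, is precisely the multiplier $\lambda^{k+1}$ defined in Step~3 of \Cref{Alg:ALM}. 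Evaluating the formula at $(x^{k+1},w^k)$ with penalty $\rho^k$ and substituting the update therefore gives
\[
	(\Lag_{\rho^k})'_x(x^{k+1},w^k)
	=
	f'(x^{k+1})+G'(x^{k+1})^*\lambda^{k+1}
	=
	\Lag'_x(x^{k+1},\lambda^{k+1}),
\]
the last equality being the definition of the ordinary Lagrangian. Inserting this identity into \Cref{Asm:xSubAccuracy} immediately produces $\varepsilon^{k+1}-\Lag'_x(x^{k+1},\lambda^{k+1})\in\Nor_C(x^{k+1})$ with $\varepsilon^{k+1}\wtos 0$, which, after discarding the single initial term $k=0$ (an index shift that leaves the w-AKKT property untouched), is exactly the desired stationarity condition.

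The step requiring the most care is the identity $(\Lag_{\rho^k})'_x(x^{k+1},w^k)=\Lag'_x(x^{k+1},\lambda^{k+1})$, specifically the bookkeeping across the Gelfand triple $Y\hookrightarrow H=H^*\hookrightarrow Y^*$. The gradient $z-P_{\mathcal{K}}(z)$ lives in $H$, and $G'(x^{k+1})^*$ acts on it via the $H$-inner product, whereas $\Lag'_x$ pairs $\lambda^{k+1}\in H\subset Y^*$ with values of $G'(x^{k+1})$ through the $Y^*$-$Y$ duality. These agree because for $h\in H$ and $y\in Y$ one has $(h,y)_H=\dual{h}{y}_Y$ by the definition of the triple; testing both expressions against an arbitrary $d\in X$ and using $G'(x^{k+1})\,d\in Y$ then delivers the equality. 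With this compatibility in hand, every remaining manipulation is a direct substitution, and since the very same pair $(x^k,\lambda^k)$ is simultaneously an element of $X\times H$ and of $X\times Y^*$, the argument establishes the w-AKKT sequence property for \eqref{Eq:OptH} and for \eqref{Eq:Opt} at once.
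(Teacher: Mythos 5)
Your proposal is correct and follows essentially the same route as the paper's proof: you differentiate the augmented Lagrangian, observe that the multiplier update in Step~3 yields the identity $(\Lag_{\rho^k})'_x(x^{k+1},\BddMul^k)=\Lag'_x(x^{k+1},\lambda^{k+1})$, and then combine \Cref{Asm:xSubAccuracy} with \Cref{Lem:firstPartOfAKKT}. Your additional care with the Gelfand-triple bookkeeping, i.e.\ $(h,y)_H=\dual{h}{y}_Y$ for $h\in H$, $y\in Y$, merely makes explicit a compatibility the paper leaves implicit, so the two arguments coincide in substance.
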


\begin{proof}
	First, we obtain
	\begin{equation*}
	(\Lag_{\rho^k})_x'(x,\BddMul^k)
	= f'(x)+\rho^k G'(x)^* \mleft[ G(x)+\frac{\BddMul^k}{\rho^k}-P_{\ConH}\mleft(
	G(x)+\frac{\BddMul^k}{\rho^k}\mright) \mright].
	\end{equation*}
	Thus, we deduce from the definition of $ \lambda^{k+1} $ that
	$(\Lag_{\rho^k})'_x(x^{k+1},\BddMul^k)=\Lag'_x(x^{k+1},\lambda^{k+1})$.
	Consequently, \Cref{Asm:xSubAccuracy} yields
	$\varepsilon^{k+1}-\Lag'_x(x^{k+1},\lambda^{k+1}) \in \Nor_{\ConX}(x^{k+1})$ for all $k\in\N$, 
	and we additionally have the convergence $\varepsilon^k\wtos 0$.
	Together with \Cref{Lem:firstPartOfAKKT}, the claim follows.
\end{proof}

Note that \Cref{Alg:ALM} is a kind of penalty method, and therefore suffers from the drawback
that (weak or strong) limit points generated by this method may not be feasible to \eqref{Eq:OptH}. The general
convergence theory for this method presented in \cite{BoergensKanzow2019local} shows, 
however, that we usually get (at least) a KKT point of the program
\[
		\minimize_{x\in\ConX}\;(d_\ConH^2\circ G)(x).
\]
Hence,
these limit points can be interpreted in a suitable way, namely as being stationary points of
the constraint violation. In the following result, we can 
therefore concentrate on the situation where we have a feasible weak limit point of $\{x^k\}$
at hand. 
This is the situation where 
w-AKKT regularity 
can be applied in order to obtain the following convergence
result which puts the corresponding theorem from \cite{BoergensKanzow2019local}, where 
validity of RCQ was assumed, in some other light. 

\begin{corollary}
Suppose that the subproblems in \eqref{Eq:AL:xSubproblem} are solved such that \Cref{Asm:xSubAccuracy} 
holds.
Assume that $ f' $ is weak-to-weak* continuous. 
Suppose that the sequence $\{ x^k \} $ generated by \Cref{Alg:ALM} 
admits a feasible weak accumulation point $ \bar x\in\Feas $ that 
is w-AKKT regular. 
\eqref{Eq:Opt}. Then $\bar x$ is a KKT point of \eqref{Eq:Opt}.
\end{corollary}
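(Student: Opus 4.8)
The plan is to combine the two results that have already been established for the safeguarded augmented Lagrangian method and for w-AKKT regularity, namely \Cref{Thm:ALgenAKKTseq} and \Cref{wAKKTpIsKKT}~(a). The underlying idea is simple: \Cref{Alg:ALM} is designed precisely so that its iterates form a w-AKKT sequence of \eqref{Eq:Opt}, and w-AKKT regularity is precisely the constraint qualification that upgrades a w-AKKT point to a genuine KKT point. The feasible weak accumulation point $\bar x$ will serve as the weak limit required in the definition of a w-AKKT point.

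First I would invoke \Cref{Thm:ALgenAKKTseq}: since \Cref{Asm:xSubAccuracy} is in force, the sequence $\{(x^k,\lambda^k)\} \subset X \times Y^*$ produced by \Cref{Alg:ALM} is a w-AKKT sequence of \eqref{Eq:Opt}. Tracing back to \Cref{Def:AKKTseq}, this yields sequences $\{\varepsilon^k\} \subset X^*$ and $\{r^k\} \subset [0,+\infty)$ with $\varepsilon^k \wtos 0$, $r^k \searrow 0$, and the two defining relations of \eqref{Eq:akktSeqDef} holding for every $k \in \N$. Next I would extract, using the assumption that $\bar x$ is a weak accumulation point of $\{x^k\}$, a subsequence (indexed by $\{k_j\}$) with $x^{k_j} \wto \bar x$. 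The point to check here is that the truncated triple $\{(x^{k_j}, \lambda^{k_j})\}$, together with $\{\varepsilon^{k_j}\}$ and $\{r^{k_j}\}$, is again a w-AKKT sequence: the inclusion and the scalar inequality in \eqref{Eq:akktSeqDef} hold at every original index and therefore persist along the subsequence, while $\varepsilon^{k_j} \wtos 0$ is inherited from $\varepsilon^k \wtos 0$, and $r^{k_j} \searrow 0$ because a subsequence of a non-increasing null sequence is again non-increasing and null. Consequently, by \Cref{Def:AKKTpoint}~(a), the feasible point $\bar x$ is a w-AKKT point of \eqref{Eq:Opt}.

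Finally I would apply \Cref{wAKKTpIsKKT}~(a). Its hypotheses are met: $f'$ is weak-to-weak* continuous and hence weak-to-weak*-sequentially continuous, $\bar x$ is w-AKKT regular by assumption, and we have just verified that $\bar x$ is a w-AKKT point. The theorem then delivers the desired conclusion that $\bar x$ is a KKT point of \eqref{Eq:Opt}.

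As for difficulties, the argument is essentially a bookkeeping chain and I expect no substantial obstacle. The only genuinely delicate point is the passage to the subsequence: one must confirm that the w-AKKT sequence structure is preserved, most notably the monotonicity $r^{k_j} \searrow 0$, and reconcile the stated ``weak-to-weak* continuous'' hypothesis on $f'$ with the ``weak-to-weak*-sequentially continuous'' requirement of \Cref{wAKKTpIsKKT}~(a), which is immediate since (topological) continuity implies sequential continuity. It is also worth stressing that feasibility of $\bar x$ is assumed outright, so, unlike in the minimizer-oriented \Cref{Prop:OptAKKT2}, no appeal to the continuity property \eqref{eq:continuity_property_distance_constraint_map} is needed to secure $\bar x \in \Feas$.
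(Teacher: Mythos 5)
Your proposal is correct and follows exactly the paper's route: the paper's own proof is the one-line observation that the claim ``follows immediately from \Cref{wAKKTpIsKKT,Thm:ALgenAKKTseq},'' and your argument simply spells out the bookkeeping (subsequence extraction, persistence of the w-AKKT structure along the subsequence, and the matching of the continuity hypothesis on $f'$) that the paper leaves implicit. All of those verifications are sound, so nothing further is needed.
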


\begin{proof}
This follows immediately from \Cref{wAKKTpIsKKT,Thm:ALgenAKKTseq}.
\end{proof}

\section{Final Remarks}\label{Sec:Final}

In this paper, we have shown that certain
types of AKKT regularity 
may serve as constraint
qualifications which apply to abstract optimization problems in Banach spaces. 
We discussed the relation of these new CQs to already existing ones and demonstrated by means
of some examples that there are several practically relevant situations where 
AKKT regularity conditions
apply while Robinson's CQ is generally violated. 
It has been shown that
AKKT regularity
may also be helpful for the convergence
analysis associated with optimization algorithms.
In the future, it remains to be seen to what extent these new constraint qualifications enrich
the theory on optimization in Banach spaces and optimal control.
Particularly, some more situations have to be identified where
AKKT regularity
can be applied in a profitable way.
Let us point out that even in the case where $X$ and $Y$ are finite dimensional, 
there seems to be lots of potential hidden in 
AKKT regularity properties
since our generalized version from \Cref{Def:AKKTCQ} applies to the abstract model
\eqref{Eq:Opt}, and the latter covers, e.g., semidefinite or second-order cone programs.

In the finite-dimensional setting, 
AKKT regularity
has been successfully generalized
to the setting of \emph{mathematical programs with complementarity constraints} (MPCCs), see
\cite{AndreaniHaeserSecchinSilva2019,Ramos2019}.
Recently, the notion of MPCCs has been studied in the rather general context of Banach spaces in
\cite{Mehlitz2017,MehlitzWachsmuth2016,Wachsmuth2015,Wachsmuth2016}. 
Therein, some reasonable problem-tailored notions of stationarity have been defined
and associated CQs have been investigated. 
However, due to the limited number of CQs addressing \eqref{Eq:Opt},
only a few problem-tailored MPCC-CQs have been derived in these papers. 
Motivated by the results from 
\cite{AndreaniHaeserSecchinSilva2019,Ramos2019}, 
we aim to study applicability of the concept of  
AKKT regularity
(or some problem-tailored counterparts)
in the setting of MPCCs in Banach spaces in the future.

One can easily check that MPCCs can be modeled in the form \eqref{Eq:Opt} whenever the sets
$K$ and $C$ are allowed to be non-convex. Thus, the question arises whether one can extend
the theory of this paper to this even more general setting. Observing that our analysis is based
on sequences, this naturally would lead to the involvement of the limiting normal cone.
A brief investigation of the proof of \cite[Theorem~3.3]{Ramos2019} reveals that this should work
fine in the finite-dimensional setting. In the infinite-dimensional situation, it has to
be expected that additional assumptions on the data have to be 
satisfied e.g.\ in order to obtain a counterpart of \cref{Prop:OptAKKT2}
or to apply the calculus rules of limiting variational analysis.
This observation is reflected by the results from \cite[Section~5.1.2]{Mordukhovich2006a}
where KKT-type necessary optimality conditions for optimization problems of type
\eqref{Eq:Opt} with non-convex sets $K$ and $C$ are derived in the presence of suitable
constraint qualifications and additional so-called \emph{sequential
compactness} assumptions on the data $K$ or $C$ which are quite restrictive in the
function space setting, see \cite{Mehlitz2019}. However, these assumptions might be 
interpreted as a price one has to pay for the absence of convexity.
Indeed, the theory in this paper does not require any sequential compactness of the
data.

\subsection*{Acknowledgement}
We thank the two anonymous referees
for the careful reading of the manuscript
and for pointing out \cref{lem:referee}.

\bibliographystyle{siamplain}
\bibliography{AKKTReg}

\end{document}